\newtheorem{theorem}{Theorem}[section]
\newtheorem{lemma}[theorem]{Lemma}
\newtheorem{proposition}[theorem]{Proposition}
\newtheorem{corollary}[theorem]{Corollary}
\theoremstyle{definition}
\newtheorem{definition}[theorem]{Definition}
\newtheorem{remark}{Remark}
\DeclareMathOperator{\Ext}{Ext}
\DeclareMathOperator{\Hom}{Hom}
\DeclareMathOperator{\cok}{cok}
\DeclareMathOperator{\im}{Im}
\newcommand{\cat}[1]{\mathcal{#1}}           
\newcommand{\class}[1]{\mathcal{#1}}   
\newcommand{\Z}{\mathbb{Z}}
\newcommand{\mathcolon}{\colon\,} 
\newcommand{\ch}{\textnormal{Ch}(R)}
\newcommand{\chain}[1]{\textnormal{Ch}(#1)}
\newcommand{\cha}[1]{\textnormal{Ch}(\mathcal{#1})}
\newcommand{\dwclass}[1]{dw\widetilde{\class{#1}}}
\newcommand{\exclass}[1]{ex\widetilde{\class{#1}}}
\newcommand{\tilclass}[1]{\widetilde{\class{#1}}}
\newcommand{\dgclass}[1]{dg\widetilde{\class{#1}}}
\newcommand{\rightperp}[1]{#1^{\perp}}
\newcommand{\leftperp}[1]{{}^\perp #1}
\newcommand{\homcomplex}{\mathit{Hom}}
\newcommand{\Ho}[1]{{\textnormal{Ho}(#1)}}
\begin{document}

\title{Exact model structures and recollements}

\author{James Gillespie}
\address{Ramapo College of New Jersey \\
         School of Theoretical and Applied Science \\
         505 Ramapo Valley Road \\
         Mahwah, NJ 07430}
\email[Jim Gillespie]{jgillesp@ramapo.edu}
\urladdr{http://pages.ramapo.edu/~jgillesp/}

\dedicatory{Dedicated to David Gillespie on the occasion of his $0^{th}$ birthday}

\date{\today}

\begin{abstract}
We show how to obtain recollements of triangulated categories using the theory of exact model structures from~\cite{gillespie-exact model structures}. After noting how the theory relates to well-known notions in the simplest case of Frobenius categories, we apply these ideas to recollements of homotopy categories of chain complexes. In short, we give model categorical explanations for the classical Verdier localization recollement as well as several recollement situations due to Neeman and Murfet.

\end{abstract}

\maketitle

\section{introduction}

This work is a continuation of~\cite{gillespie-recollements}, where the author showed that there is a strong link between recollements of triangulated categories and the cotorsion pairs which serve as abelian model structures for those categories. But the abelian setting is too restrictive for many interesting applications, and so the idea in this paper is that we should relax the hypotheses in the theory from the abelian setting to the more general setting of exact categories, in the sense of Quillen~\cite{quillen-algebraic K-theory}. We offer an alternate proof and statement of the main theorem from~\cite{gillespie-recollements} yielding a recollement from three cotorsion pairs and we give applications showing how some of the most common and interesting recollements in algebraic geometry can be obtained using exact model structures.

As a motivating example, lets consider a well-known recollement. Let $R$ be a ring and denote by $K(R)$ its homotopy category. So $K(R)$ is the category of chain complexes of $R$-modules with morphisms the homotopy classes of chain maps. Recall that the derived category $\class{D}(R)$ is, by definition, the category obtained from $K(R)$ by formally inverting the homology isomorphisms. This is often written as $\class{D}(R) = K(R)[\class{W}^{-1}]$ where $\class{W}$ is the class of all homology isomorphisms. $K(R)$ and $\class{D}(R)$ are each triangulated categories and the kernel of the quotient functor $K(R) \xrightarrow{Q} \class{D}(R)$ is precisely the full subcategory $\class{E}/\hspace{-.04in}\sim$ of all exact complexes (modulo the chain homotopy relation $\sim$). The functor $Q$ has both a left and a right adjoint. The right adjoint takes a complex to its K-injective resolution while its left adjoint takes a complex to its K-projective resolution. All told, the localization $\class{E}/\hspace{-.04in}\sim \ \xrightarrow{} K(R) \xrightarrow{Q} \class{D}(R)$ forms the center arrows in a recollement diagram
\[
\xy
(-27,0)*+{\class{E}/\hspace{-.04in}\sim};
(0,0)*+{K(R)};
(27,0)*+{\class{D}(R)};
{(-19,0) \ar (-10,0)};
{(-10,0) \ar@<0.4em> (-19,0)};
{(-10,0) \ar@<-0.4em> (-19,0)};
{(10,0) \ar (19,0)};
{(19,0) \ar@<0.4em> (10,0)};
{(19,0) \ar@<-0.4em> (10,0)};
\endxy
.\]
Although this diagram holds a lot of information, Corollary~\ref{cor-Verdier recollement via cotorsion pairs} gives a model categorical description for \emph{all} of this information at once. It can be explained quite easily as follows in terms of cotorsion pairs in the category of chain complexes together with the degreewise split short exact sequences: Let $\class{W}$ denote the class of all contractible complexes, $\class{E}$ denote the class of all exact complexes, $K\class{I}$ the class of all K-injective complexes, $K\class{P}$ the class of all K-projective complexes, and $\class{A}$ the class of all complexes. Then we have three complete cotorsion pairs which are equivalent to Quillen model structures for the homotopy categories as follows.
\begin{itemize}
\item $(\class{W}_1,\class{F}_1) = (\class{W},\class{A})$ = A Quillen model structure for $K(R)$.
\item $(\class{W}_2,\class{F}_2) = (K\class{P},\class{E})$ = A Quillen model structure for $\class{E}/\hspace{-.04in}\sim$.
\item $(\class{W}_3,\class{F}_3) = (\class{E},K\class{I})$ = A Quillen model structure for $\class{D}(R)$.
\end{itemize}
Having these three cotorsion pairs tells us at once that $K(R)$, $\class{E}/\hspace{-.04in}\sim$, and $\class{D}(R)$ are triangulated categories. Moreover, the recollement diagram is automatic from the fact that $\class{W}_3 \cap \class{F}_1 = \class{F}_2$. Indeed Theorem~\ref{them-recollements in krause form} says that this happens in far more general situations when we have three nice cotorsion pairs in an exact category which is weakly idempotent complete. All weakly idempotent complete means is that every split monomorphism has a cokernel.

For convenience we will refer to weakly idempotent complete categories possessing a Quillen exact structure as \emph{WIC exact categories} in this paper. The author showed in~\cite{gillespie-exact model structures} that the correspondence between model structures and cotorsion pairs from~\cite{hovey} carries over to the case of WIC exact categories. So to be more precise, Theorem~\ref{them-recollements in krause form} says the following. Suppose we have three injective cotorsion pairs $\class{M}_1 = (\class{W}_1, \class{F}_1)$, $\class{M}_2 = (\class{W}_2, \class{F}_2)$, and $\class{M}_3 = (\class{W}_3, \class{F}_3)$ in a WIC exact category $\cat{A}$ with enough injectives. Then each corresponds to an injective model structure on $\cat{A}$ having $\class{F}_i$ as its class of fibrant objects and having a triangulated homotopy category $\cat{A}/\class{W}_i \,\cong\, \class{F}_i/\sim \,$. It turns out here that $\class{F}_i$ is a Frobenius category and the formal $\sim$ is characterized by the expected: $f \sim g$ iff $g-f$ factors through an injective. If the three cotorsion pairs satisfy the simple containments $\class{F}_3 \subseteq \class{F}_1$ and $\class{W}_3 \cap \class{F}_1 = \class{F}_2$, then we get a recollement diagram. See Theorem~\ref{them-recollements in krause form} where there is also a picture of the recollement and a description of the involved functors. Theorem~\ref{them-recollements theorem projective version} states the projective dual. The benefit the author sees in this, is that it allows us to take an enormous amount of technical attention away from the actual categories and adjoints, and to focus on the much simpler cotorsion pairs themselves. See also the final Remark of the paper.

So while the first purpose of this paper is to show that the notion of a WIC exact model structure provides a natural and convenient formal language to easily discuss recollements, the second purpose of this paper is to give examples. The details to the motivating example above appear in Section~\ref{sec-K-injective and K-projective model structures}. But it is in Section~\ref{sec-Murfet and Neeman models and recollements} where the most interesting applications appear. Here we find the correct model structures for which the recollements due to Neeman and Murfet from~\cite{neeman}, \cite{murfet}, and~\cite{neeman-adjoints} immediately follow. That is, we construct exact model structures, both an injective version and in the affine case a projective version, for which the recollements follow at once from Theorems~\ref{them-recollements in krause form} and~\ref{them-recollements theorem projective version}. In particular, given a ring $R$, we will describe explicitly two balanced model structures, one injective and one projective, for the category $\chain{\class{F}}/\tilclass{F}$ = The derived category of the exact category of flat modules. An interesting aspect of the approach is that all of the model structures appearing in this section are obtained by simply restricting known model structures on $\ch$, to the category $\chain{\class{F}}$ of chain complexes of flat modules.

Some of what we do relies on working with the category of chain complexes along with the degreewise split exact structure. Such a category is always a Frobenius category as we recall in Section~\ref{sec-preliminaries}. So when beginning the applications in Section~\ref{sec-localizing cot pairs in frobenius categories} we start by seeing what Theorem~\ref{them-recollements in krause form} says in the easiest setting, that of a Frobenius category. In this setting the notion of an injective cotorsion pair coincides with the notion of a projective cotorsion pair, and we call them \emph{localizing cotorsion pairs}. We conclude that in the Frobenius case, Theorem~\ref{them-recollements in krause form} recovers the well-known idea of a torsion triple. The only difference is that we describe things as \emph{localizing cotorsion triples} in the Frobenius category $\cat{A}$, rather than the torsion triples which exist on the level of the stable category $\cat{A}/\sim$. But they are the same thing. For example, in the illustrative example we gave above $(K\class{P},\class{E},K\class{I})$ is a localizing cotorsion triple and yields the recollement.

We also wish to point out that our work clearly relates to that of Saor{\'{\i}}n and Jan {\v{S}}t'ov{\'{\i}}{\v{c}}ek in~\cite{saorin-stovicek}. Here the authors go into more detail on how the small object argument can be adapted to the exact category setting. This again points to the idea that exact categories work as a nice setting for much of the modern trends in homological algebra.

\section{preliminaries: WIC exact model structures and chain complexes}\label{sec-preliminaries}

This paper is about recollements of triangulated categories but the reader really just needs to know about cotorsion pairs or model structures.  We point out that the paper relies on some work in~\cite[Sections~3 and~4]{gillespie-recollements} and~\cite[Sections~1--4]{gillespie-exact model structures}. In fact, this paper is a continuation of~\cite{gillespie-recollements}, but applying the theory to more general situations using the ideas in~\cite{gillespie-exact model structures}. The definition of a recollement that we use is the same as in~\cite[Sections~3 and~4]{gillespie-recollements} and this is the standard definition. The author learned about recollements from~\cite{krause-stable derived cat of a Noetherian scheme} and there is a more thorough treatment in~\cite{krause-localization theory for triangulated categories}. The standard reference is~\cite{BBD-perverse sheaves}. The related notion of a torsion triple in a triangulated category is briefly discussed in~\cite{gillespie-recollements} but more information can be found in~\cite{beligiannis-reiten}.

The paper~\cite{gillespie-exact model structures} is brief and shows a few things which we will summarize now in more detail.
Recall that an \emph{exact category} in the sense of~\cite{quillen-algebraic K-theory} is a pair $(\class{A},\class{E})$ where $\class{A}$ is an additive category and $\class{E}$ is a class of short exact sequences. Here, a short exact sequence is an actual kernel-cokernel pair  $A \rightarrowtail B \twoheadrightarrow C$, but in this context it is only called a short exact sequence if it actually lies in the specified class $\class{E}$. For this reason, it is better to call such a sequence an \emph{admissible short exact sequence}, and to call  $A \rightarrowtail B$ (resp. $B \twoheadrightarrow C$) an \emph{admissible monomorphism} (resp. \emph{admissible epimorphism}). Many authors use the alternate terms \emph{conflation}, \emph{inflation}, and \emph{deflation}. The class $\class{E}$ must satisfy a few axioms which allow for some basic constructions. For example, all of the split exact sequences  $A \rightarrowtail A \oplus B \twoheadrightarrow B$ are assumed to be in $\class{E}$. Also the class is closed under isomorphisms, pushouts along admissible monomorphisms, and pullbacks along admissible epimorphisms. Finally, the admissible monomorphisms (resp. admissible epimorphisms) are closed under composition. A fundamental result of all this is that the usual Yoneda Ext bifunctor $\Ext^1_{\cat{A}}(A,B)$ construction will hold. Exact categories were introduced by Quillen in~\cite{quillen-algebraic K-theory}. The author highly recommends~\cite{buhler-exact categories} to the interested reader.

Now we briefly summarize a couple things from~\cite{gillespie-exact model structures}. First, in an attempt to generalize Hovey's 1-1 correspondence between abelian model structures and cotorsion pairs, the author realized that to get a closed model structure we need a small assumption on the additive category $\cat{A}$. Fortunately, this concept, of $\cat{A}$ being \emph{weakly idempotent complete}, was already explained nicely in~\cite{buhler-exact categories}. The idea is simple. Recall that a monomorphism $f : A \xrightarrow{} B$ is \emph{split} if there exists a $g : B \xrightarrow{} A$ such that $gf=1_A$. We expect such a monomorphism to actually split! But we don't get the decomposition $B \cong A \oplus \cok{f}$ unless $\cok{f}$ exists. An additive category is called \emph{weakly idempotent complete} if all split monomorphisms have a cokernel, or equivalently, all split epimorphisms have a kernel. For notational convenience we set the following language that we use in this paper.

\begin{definition}\label{def-WIC exact category}
Let $\cat{A} = (\cat{A},\class{E})$ be an exact category. If the underlying additive category is weakly idempotent complete, then we also say the exact category $\cat{A}$ is a weakly idempotent complete. For short, we will call this a \emph{WIC exact} category.
\end{definition}

We point out that for a weakly idempotent complete category $\cat{A}$, the notion of retracts coincides with direct summands. Also, a full subcategory $\cat{S}$ of an abelian category naturally inherits the structure of an exact category whenever $\cat{S}$ is closed under extensions. If $\cat{S}$ is also closed under direct summands then this inherited exact structure is WIC.  With this notion of a WIC exact category we get the following analog of Hovey's one-to-one correspondence as below, which appeared in~\cite[Corollary~3.4]{gillespie-exact model structures}.

\begin{theorem}\label{them-Hovey's theorem for WIC-exact categories}
Let $\class{A}$ be a WIC exact category. Then there is a one-to-one correspondence between exact model structures on $\cat{A}$ and complete cotorsion pairs $(\class{Q},\class{R} \cap \class{W})$ and $(\class{Q} \cap \class{W} , \class{R})$ where $\class{W}$ is a thick subcategory of $\cat{A}$. Given a model structure, $\class{Q}$ is the class of cofibrant objects, $\class{R}$ the class of fibrant objects and $\class{W}$ the class of trivial objects. Conversely, given the cotorsion pairs with $\class{W}$ thick, a cofibration (resp. trivial cofibration) is an admissible monomorphism with a cokernel in $\class{Q}$ (resp. $\class{Q} \cap \class{W}$), and a fibration (resp. trivial fibration) is an admissible epimorphism with a kernel in $\class{R}$ (resp. $\class{R} \cap \class{W}$). The weak equivalences are then the maps $g$ which factor as $g = pi$ where $i$ is a trivial cofibration and $p$ is a trivial fibration.
\end{theorem}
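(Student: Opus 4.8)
The plan is to follow Hovey's proof of the analogous correspondence for abelian model categories, systematically replacing ``short exact sequence'' by ``admissible short exact sequence'' and invoking weak idempotent completeness precisely where Hovey uses the existence of arbitrary kernels and cokernels. The two directions of the bijection are proved separately, and the completeness hypothesis on the cotorsion pairs is what supplies the factorization axioms for free.

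For the forward direction I would start from an exact model structure on $\cat{A}$ and set $\class{Q}$, $\class{R}$, $\class{W}$ to be the classes of cofibrant, fibrant, and trivial objects. The first task is to identify the (trivial) cofibrations with the admissible monomorphisms whose cokernel lies in $\class{Q}$ (resp.\ $\class{Q} \cap \class{W}$), and dually for (trivial) fibrations; here the word ``exact'' in ``exact model structure'' guarantees that cofibrations are admissible monos, and the WIC hypothesis guarantees that such cokernels exist and are genuine direct summands. A Yoneda-$\Ext$ lifting computation then shows $\Ext^1_{\cat{A}}(Q, Y) = 0$ whenever $Q \in \class{Q}$ and $Y \in \class{R} \cap \class{W}$, and conversely that anything right-orthogonal to all of $\class{Q}$ is forced into $\class{R} \cap \class{W}$ by factoring $0 \to X$ as a cofibration followed by a trivial fibration and taking a retract; this yields the cotorsion pair $(\class{Q}, \class{R} \cap \class{W})$, and $(\class{Q} \cap \class{W}, \class{R})$ is dual. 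Completeness of both pairs is read off from the functorial factorizations, and thickness of $\class{W}$ --- closure under retracts together with the two-out-of-three property for summands of admissible short exact sequences --- comes from the retract and two-out-of-three axioms applied to weak equivalences.

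For the reverse direction I would take two complete cotorsion pairs $(\class{Q}, \class{R} \cap \class{W})$ and $(\class{Q} \cap \class{W}, \class{R})$ with $\class{W}$ thick, define the five classes of maps exactly as in the statement, and verify the model category axioms. The technical core is Hovey's lemma that a map is a trivial cofibration if and only if it is simultaneously a cofibration and a weak equivalence (and dually for fibrations): this uses both cotorsion pairs together with thickness of $\class{W}$, plus the elementary facts that in a WIC exact category a map which is both an admissible mono and an admissible epi is an isomorphism, and that retracts of admissible monos are again admissible monos. Granting this lemma, the lifting axioms follow by the usual ``retract of a lift'' argument, the factorization axioms follow immediately from completeness of the cotorsion pairs, the retract axiom is immediate once the classes are described by $\Ext$-orthogonality, and the two-out-of-three axiom for weak equivalences reduces, via a factorization, to thickness of $\class{W}$.

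The hard part will be the bookkeeping around retracts and two-out-of-three in a setting lacking all kernels and cokernels: every place where Hovey simply forms a kernel or cokernel must instead be routed through weak idempotent completeness (so that split monos and split epis have cokernels and kernels, and retracts coincide with direct summands) and through the exact-structure axioms (pushout stability of inflations, pullback stability of deflations, composability of each). In particular, checking that the class of weak equivalences --- defined as those $g = p i$ with $i$ a trivial cofibration and $p$ a trivial fibration --- is independent of the chosen factorization, i.e.\ that it coincides with the class one would define intrinsically, is the crux, and it is exactly here that thickness of $\class{W}$ is indispensable.
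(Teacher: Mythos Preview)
The paper does not give a proof of this theorem; it is stated as a summary result and attributed to \cite[Corollary~3.4]{gillespie-exact model structures}, the author's earlier paper. Your sketch is essentially the approach taken there: Hovey's abelian argument is re-run in the WIC exact setting, with the weak idempotent completeness hypothesis invoked exactly to make retracts coincide with direct summands and to ensure the retract axiom holds (cf.\ also the author's remark in the proof of Proposition~\ref{prop-frobenius cats are exact model cats iff WIC} that closure of admissible monomorphisms under retracts forces weak idempotent completeness). So your plan is correct and matches what the cited paper does; there is nothing further to compare in the present paper itself.
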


We call such a model structure on $A$, that is, one that is compatible with the exact structure, an \textbf{exact model structure}. Also, we often will denote the model structure by the triple $\class{M} = (\class{Q},\class{W},\class{R})$ and call it a \emph{Hovey triple}.
Recall that a model category is typically now assumed to be bicomplete. But as described in more detail in Section~4 of~\cite{gillespie-exact model structures}, an exact category automatically comes with enough limits and colimits to do the very basics of homotopy theory. For example, if $\cat{A}$ has an exact model structure, then we can construct the left and right homotopy relations without any further assumption on limits/colimits. Moreover, we obtain the Fundamental Theorem of Model Categories asserting that the homotopy category is a localization with respect to the weak equivalences and is equivalent to the full subcategory of cofibrant-fibrant objects, modulo the formal homotopy relation. So we can speak of model structures on exact categories but we won't call it a ``model category'' unless it is bicomplete.

Next, we summarize a main result from~\cite{gillespie-exact model structures} which turns out to be very practical. It is the characterization of the left and right homotopy relations in terms of the cotorsion pairs.

\begin{proposition}\label{prop-left and right homotopic maps in exact model structures}
Assume $\class{A}$ is an exact category with an exact model structure. Let $(\class{Q},\class{R} \cap \class{W})$ and $(\class{Q} \cap \class{W} , \class{R})$ be the corresponding complete cotorsion pairs of Theorem~\ref{them-Hovey's theorem for WIC-exact categories}.
\begin{enumerate}
\item Two maps $f,g \mathcolon X \xrightarrow{} Y$ in $\class{A}$ are right homotopic if and only if $g-f$ factors through a trivially cofibrant object, that is, one in $\class{Q} \cap \class{W}$.
\item Two maps $f,g \mathcolon X \xrightarrow{} Y$ in $\class{A}$ are left homotopic if and only if $g-f$ factors through a trivially fibrant object, that is, one in $\class{R} \cap \class{W}$.
\item Suppose $Y$ is fibrant, that is, $Y \in \class{Q}$. Then two maps $f,g \mathcolon X \xrightarrow{} Y$ in $\class{A}$ are right homotopic if and only if $g-f$ factors through an object of $\class{Q} \cap \class{R} \cap \class{W}$.
\item Suppose $X$ is cofibrant, that is, $X \in \class{Q}$. Then two maps $f,g \mathcolon X \xrightarrow{} Y$ in $\class{A}$ are left homotopic if and only if $g-f$ factors through an object of $\class{Q} \cap \class{R} \cap \class{W}$.
 \item Suppose $X$ is cofibrant and $Y$ is fibrant. Then two maps $f,g \mathcolon X \xrightarrow{} Y$ in $\class{A}$ are homotopic if and only if $g-f$ factors through an object of $\class{Q} \cap \class{R} \cap \class{W}$ if and only if $g-f$ factors through an object of $\class{Q} \cap \class{W}$ if and only if $g-f$ factors through an object of $\class{R} \cap \class{W}$.
\end{enumerate}
\end{proposition}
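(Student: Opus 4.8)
The plan is to work throughout with explicit cylinder and path objects built directly from the two cotorsion pairs, reducing the fibrant/cofibrant statements (3)--(5) to the general ones (1)--(2) together with Quillen's standard comparison lemmas (which, like the Fundamental Theorem, remain valid for exact model structures). The ``if'' halves of (1) and (2) are the easy ones, and they also yield the ``if'' halves of (3)--(5), since $\class{Q}\cap\class{R}\cap\class{W}$ lies in both $\class{Q}\cap\class{W}$ and $\class{R}\cap\class{W}$. Given $g-f=\beta\alpha$ with $\alpha\colon X\to T$, $\beta\colon T\to Y$, $T\in\class{Q}\cap\class{W}$, I would form $Y^{I}:=Y\oplus T$ with $d_0$ the projection, $d_1\colon(y,t)\mapsto y+\beta(t)$, and $s\colon y\mapsto(y,0)$: then $s$ is a split, hence admissible, monomorphism with cokernel $T\in\class{Q}\cap\class{W}$, so $s$ is a trivial cofibration, and $(d_0,d_1)s=\Delta_{Y}$, so $Y^{I}$ is a path object; the map $H:=(f,\alpha)$ satisfies $d_0H=f$ and $d_1H=f+\beta\alpha=g$, so $f\sim_{r}g$. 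The ``if'' half of (2) is dual, using the cylinder $CX:=X\oplus T'$ with $T'\in\class{R}\cap\class{W}$.

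For the ``only if'' halves I would first replace a witnessing path object by a \emph{good} one: factor its structure map $Y^{I}\to Y\oplus Y$ as a trivial cofibration followed by a fibration and push the homotopy forward along the trivial cofibration. In a good path object $Y\xrightarrow{s}Y^{I}\xrightarrow{(d_0,d_1)}Y\oplus Y$ the identity $d_0 s=1_{Y}$ forces $s$ to be a split, hence admissible, monomorphism, so $Y^{I}\cong Y\oplus M$ with $M=\ker d_0\cong\cok s$; since $s$ is an admissible monomorphism that is a weak equivalence, $\cok s\in\class{W}$, whence $M\in\class{W}$. Unwinding the splitting, $d_1$ restricts on $M$ to an admissible epimorphism $q\colon M\twoheadrightarrow Y$ with $\ker q=\ker(d_0,d_1)\in\class{R}$, the homotopy becomes $H=(f,\eta)$ with $\eta\colon X\to M$, and one reads off $g-f=q\eta$. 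So an arbitrary right homotopy already factors $g-f$ through a trivial object; dually a left homotopy factors $g-f$ through $D=\ker\sigma\in\class{W}$ arising from a good cylinder $X\oplus X\to CX\xrightarrow{\sigma}X$, $CX\cong X\oplus D$.

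It remains to promote these factorizations into the claimed classes. For (3), when $Y$ is fibrant take the path object \emph{very} good (factor $\Delta_{Y}$ as a trivial cofibration followed by a fibration) and invoke the standard fact that for fibrant $Y$ right homotopy is independent of the choice of good path object, so $f\sim_{r}g$ is realized there; then $M=\cok s\in\class{Q}\cap\class{W}$ because $s$ is a trivial cofibration, and the exact sequence $0\to\ker(d_0,d_1)\to M\xrightarrow{q}Y\to 0$ with outer terms in $\class{R}$ gives $M\in\class{R}$ by closure under extensions, so $g-f$ factors through $\class{Q}\cap\class{R}\cap\class{W}$. Part (4) is dual, using that for cofibrant $X$ left homotopy is independent of the good cylinder, that a very good cylinder has $\sigma$ a trivial fibration (so $D=\ker\sigma\in\class{R}\cap\class{W}$), and closure of $\class{Q}$ under extensions (so $D\in\class{Q}$). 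For (1) and (2) themselves, I would feed the trivial object $M$ into a special $\class{Q}$-precover $0\to V\to T\to M\to 0$ from the complete pair $(\class{Q},\class{R}\cap\class{W})$: since $M,V\in\class{W}$ and $\class{W}$ is thick, $T\in\class{Q}\cap\class{W}$, and $T\twoheadrightarrow M\xrightarrow{q}Y$ is an admissible epimorphism whose kernel, an extension of members of $\class{R}$, again lies in $\class{R}$; then $Y\oplus T$ is a very good path object and realizing $f\sim_{r}g$ through it would display $g-f$ as factoring through the trivially cofibrant $T$ (dually for (2) with a special $\class{R}$-preenvelope of $D$). Finally (5) is formal: for $X$ cofibrant and $Y$ fibrant the three homotopy relations coincide, so the three factorization conditions are each, by (1)--(4), equivalent to $f$ and $g$ being homotopic.

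The genuine obstacle I foresee is precisely this last promotion for (1) and (2): extracting from an \emph{arbitrary} right homotopy a factorization through a trivially cofibrant, not merely a trivial, object when $X$ need not be cofibrant nor $Y$ fibrant. A good path object hands over a factorization through a trivial $M$ at no cost, but replacing $M$ by the trivially cofibrant $T$ requires transporting the homotopy, i.e. lifting $\eta\colon X\to M$ through the trivial fibration $T\twoheadrightarrow M$ --- transparent when $X$ is cofibrant, and in general to be arranged by playing the completeness of \emph{both} cotorsion pairs against the thickness of $\class{W}$.
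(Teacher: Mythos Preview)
The paper does not prove this proposition; it is quoted from~\cite{gillespie-exact model structures}. Your handling of (3)--(5) is correct and standard: once $Y$ is fibrant (resp.\ $X$ cofibrant) one may realize the right (resp.\ left) homotopy through a very good path (resp.\ cylinder) object, whence $M=\cok s\in\class{Q}\cap\class{W}$, and the conflation $\ker(d_0,d_1)\rightarrowtail M\twoheadrightarrow Y$ with outer terms in $\class{R}$ places $M$ in $\class{R}$ as well. One detail you pass over: to see that $q\colon M\to Y$ is an \emph{admissible} epimorphism, change coordinates on $Y\times Y$ by $(a,b)\mapsto(a,b-a)$ so that the fibration $(d_0,d_1)$ becomes $1_Y\oplus q$; then $q\circ\textnormal{pr}_M=\textnormal{pr}_2\circ(1_Y\oplus q)$ is a composite of admissible epimorphisms, and in a WIC exact category this forces $q$ itself to be one (see~\cite[Proposition~7.6]{buhler-exact categories}).

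The gap you flag in (1) and (2) is genuine, and for Hovey's definition of right homotopy (via \emph{some} path object $Y\xrightarrow{s}Y^I\to Y\times Y$ with $s$ merely a weak equivalence) it cannot be closed. Your own argument already shows that $g-f$ factors through $M=\cok s\in\class{W}$, and upgrading to $\class{Q}\cap\class{W}$ would require lifting $\eta\colon X\to M$ along a trivial fibration $T\twoheadrightarrow M$ with $T\in\class{Q}\cap\class{W}$, which needs $X$ cofibrant. Concretely: in the projective model structure on $\Ch(\Z)$ take $X$ to be $\Z$ concentrated in degree~$0$, $Y$ to be $\Z/2$ in degree~$0$, $g$ the quotient map, and $M$ the exact complex $\Z\xrightarrow{2}\Z$ in degrees~$1,0$; then $Y\oplus M$ is a good path object exhibiting $0\sim_r g$, yet $g$ is not null-homotopic and hence factors through no contractible complex, so through no object of $\class{Q}\cap\class{W}$. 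Thus (1)--(2) should be read with path objects whose structure map $s$ is a trivial cofibration (then $\cok s\in\class{Q}\cap\class{W}$ tautologically and your argument finishes immediately). The present paper only invokes the proposition in situation~(5), where your argument is complete.
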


\subsection{Injective and projective cotorsion pairs}\label{subsec-injective cotorsion pairs}

The notion of projective and injective cotorsion pairs from~\cite{gillespie-recollements} directly carries over to WIC exact categories. An \emph{injective cotorsion pair} in a WIC exact category $\cat{A}$ with enough injectives is, by definition, a complete cotorsion pair $(\class{W},\class{F})$ with $\class{W}$ thick and $\class{W} \cap \class{F}$ equalling the class of injectives in $\cat{A}$. Note that since $\cat{A}$ has enough injectives the cotorsion pair $\class{M} = (\class{W},\class{F})$ is equivalent to an exact model structure $\class{M}$ on $\cat{A}$.

We have the following characterization of injective cotorsion pairs.

\begin{proposition}\label{prop-characterizations of injective cotorsion pairs}
Assume $\class{M} = (\class{W},\class{F})$ is a complete cotorsion pair in a WIC exact category $\cat{A}$ with enough injectives. Then $(\class{W},\class{F})$ is an injective cotorsion pair if and only if $\class{W}$ is thick and contains the injective objects.
\end{proposition}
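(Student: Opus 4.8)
The plan is to handle the ``only if'' direction directly from the definition and to prove the ``if'' direction by an embed‑and‑split argument. For the forward implication there is essentially nothing to do: by definition an injective cotorsion pair $(\class{W},\class{F})$ has $\class{W}$ thick and $\class{W}\cap\class{F}$ equal to the class of injective objects of $\cat{A}$; in particular every injective lies in $\class{W}$, so $\class{W}$ is thick and contains the injectives.

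For the converse, assume $\class{W}$ is thick and contains the injectives; I must show $\class{W}\cap\class{F}$ is precisely the class of injective objects. First I would note the easy inclusion: the injectives lie in $\class{W}$ by hypothesis, and they always lie in $\class{F}=\class{W}^{\perp}$ since $\Ext^{1}_{\cat{A}}(-,I)=0$ whenever $I$ is injective; hence every injective object lies in $\class{W}\cap\class{F}$.

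The content is the reverse inclusion. Given $X\in\class{W}\cap\class{F}$, use that $\cat{A}$ has enough injectives to choose an admissible short exact sequence $X\rightarrowtail I\twoheadrightarrow C$ with $I$ injective. Now $I\in\class{W}$ by hypothesis and $X\in\class{W}$ by assumption, so thickness of $\class{W}$ forces $C\in\class{W}$. Since $X\in\class{F}=\class{W}^{\perp}$, this gives $\Ext^{1}_{\cat{A}}(C,X)=0$, so the sequence splits and $X$ is a direct summand of the injective object $I$. A direct summand of an injective is injective (its $\Ext^{1}_{\cat{A}}(A,-)$ is a direct summand of $\Ext^{1}_{\cat{A}}(A,I)=0$), so $X$ is injective. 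Therefore $\class{W}\cap\class{F}$ equals the class of injectives and $(\class{W},\class{F})$ is an injective cotorsion pair.

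I do not anticipate a real obstacle here; the only thing to get right is the interplay of the two structural inputs — thickness of $\class{W}$ to pull the cokernel $C$ back into $\class{W}$, and the orthogonality $X\in\class{W}^{\perp}$ to force the chosen injective embedding to split. It is worth observing that completeness of the cotorsion pair is not used in the converse: only the cotorsion‑pair identity $\class{F}=\class{W}^{\perp}$ and the existence of enough injectives are needed.
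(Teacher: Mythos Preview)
Your argument is correct and is precisely the standard embed-and-split proof that the paper defers to by citing Section~3 of~\cite{gillespie-recollements} (with ``abelian'' replaced by ``WIC exact''). There is no meaningful difference in approach; if anything your write-up is more self-contained than the paper's one-line citation.
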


\begin{proof}
See Section~3 of~\cite{gillespie-recollements}. All the results hold by replacing ``abelian'' with ``WIC exact''.
\end{proof}

Note that the dual notion of a \emph{projective cotorsion pair} relies on the category $\cat{A}$ having enough projectives.

\subsection{The Frobenius category $\boldsymbol{\text{Ch}(\cat{A})_{dw}}$}\label{subsec-chain complexes form a Frobenius cat}

A classic example of a Frobenius category is $\ch$, where $R$ is a ring, along with the degreewise split short exact sequences. The author denotes this exact category by $\ch_{dw}$. It is Frobenius with the projective-injective objects being precisely the contractible complexes and these coincide with the split exact complexes, or equivalently, direct sums of $n$-disks. In fact, $\cha{A}_{dw}$ is always Frobenius no matter what additive category $\cat{A}$ we start with. However, the class of contractible complexes does not, in general, coincide with the class of split exact complexes. In fact the equality of these two classes is equivalent to the additive category $\cat{A}$ being idempotent complete. Recall that $\cat{A}$ is idempotent complete if every idempotent, which is a map $p : A \xrightarrow{} A$ with $p^2 = p$, has a kernel (equivalently, a cokernel). It is easy to see that such a category is weakly idempotent complete, and the converse holds whenever countable coproducts exist in $\cat{A}$~\cite[Remark~7.3]{buhler-exact categories}. If $\cat{A}$ is an exact category with $\cat{A}$ idempotent complete then we will call $\cat{A}$ an idempotent complete exact category, or for short, an IC exact category. Just as with WIC exact categories, it is important to realize that the notion of idempotent completeness is inherent to the underlying category $\cat{A}$, and in particular, does not depend at all on the particular exact structure we are considering on $\cat{A}$. We now briefly summarize with reasons some formal statements making the above more precise.

First, note that the notion of a chain complex certainly makes sense in any additive category. Our convention is that the differential lowers degree, so $\cdots
\xrightarrow{} X_{n+1} \xrightarrow{d_{n+1}} X_{n} \xrightarrow{d_n}
X_{n-1} \xrightarrow{} \cdots$ is a chain complex. We have the following lemma. We leave the proof to the reader, pointing out that some of it appears in~\cite[Lemma~9.1]{buhler-exact categories}.
\begin{lemma}\label{lemma-ch(A)}
Let $\cat{A}$ be an additive category. Note then that $\cha{A}$ is also additive and it is idempotent complete (resp. weakly idempotent complete) if and only if $\cat{A}$ is. If $\cat{A}$ is exact (resp. WIC exact, resp. IC exact, resp. abelian, resp. Grothendieck) then $\cha{A}$ is also exact (resp. WIC exact, IC exact, resp. abelian, resp. Grothendieck) with respect to the short sequences which are exact in each degree.
\end{lemma}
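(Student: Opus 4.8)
The plan is to prove each of the stated equivalences by carrying over the underlying category-theoretic structure degreewise, checking that the relevant limits, colimits, and exact sequences in $\cha{A}$ are computed degreewise in $\cat{A}$. First I would record the easy observation that $\cha{A}$ is additive: the biproduct of two chain complexes is formed degreewise, so finite biproducts exist as soon as they do in $\cat{A}$, and the additive structure on hom-sets is inherited from $\prod_n \Hom_{\cat{A}}(X_n, Y_n)$. Next, for idempotent completeness (resp.\ weak idempotent completeness), the point is that a chain map $p \colon X \to X$ with $p^2 = p$ is precisely a degreewise idempotent $p_n \colon X_n \to X_n$ commuting with the differentials; if each $p_n$ has a kernel $K_n \hookrightarrow X_n$ in $\cat{A}$, the differential $d^X$ restricts to $K_n \to K_{n-1}$ because $d^X p = p d^X$ forces $d^X$ to preserve the kernels, and one checks $K \hookrightarrow X$ is the kernel of $p$ in $\cha{A}$. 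The same argument handles split monomorphisms of complexes for the weak version, since a degreewise-split mono with a degreewise cokernel assembles into a cokernel in $\cha{A}$ by the same ``the differential restricts'' reasoning. The converse directions are immediate: $\cat{A}$ embeds in $\cha{A}$ as the complexes concentrated in a single degree (or one can use that $\cat{A}$ is a retract of $\cha{A}$ via degree-$0$ evaluation together with the inclusion), and idempotents or split monos in $\cat{A}$ yield such in $\cha{A}$, so a splitting there restricts back.

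For the exact-structure statements, I would take as the candidate class $\class{E}_{\cha{A}}$ the sequences $X \to Y \to Z$ of complexes that are, in each degree $n$, an admissible short exact sequence of $\cat{A}$. The task is to verify Quillen's axioms for this class. The key structural input is that in $\cha{A}$ a morphism is a kernel-cokernel pair iff it is so degreewise (the differential on a degreewise kernel restricts, as above), and that pushouts along admissible monos and pullbacks along admissible epis in $\cha{A}$ are computed degreewise: given a pushout square in $\cat{A}$ in each degree, the universal maps are unique and hence automatically commute with the differentials, assembling into a pushout square of complexes whose relevant leg is again degreewise an admissible mono by the axiom in $\cat{A}$. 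Closure of admissible monos (resp.\ epis) under composition, and containment of all split sequences, are checked degree by degree with no extra work. The WIC and IC refinements then follow by combining this with the idempotent-completeness equivalence already established. For the abelian case, one observes that kernels and cokernels in $\cha{A}$ are computed degreewise and that the first isomorphism theorem is inherited degreewise; for the Grothendieck case one additionally notes that $\cha{A}$ has a generator built from the disks $D^n(G)$ on a generator $G$ of $\cat{A}$, and that filtered colimits in $\cha{A}$ are degreewise filtered colimits, hence exact.

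The main obstacle — really the only subtle point, and even then a mild one — is the recurring verification that when a universal construction (kernel, cokernel, pushout, pullback, colimit) is performed degreewise in $\cat{A}$, the resulting family of objects genuinely carries a differential making it the corresponding construction in $\cha{A}$. The mechanism is always the same: the differentials of the ambient complexes induce, via the uniqueness half of the relevant universal property, a unique map on the new objects in each degree, and $d^2 = 0$ plus functoriality follow from the same uniqueness. I would state this once as a small ``degreewise principle'' and then apply it uniformly, rather than re-deriving it for each axiom; this keeps the proof short and justifies leaving the routine diagram chases to the reader, as the lemma statement already signals.
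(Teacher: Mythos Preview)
Your proposal is correct and follows the standard degreewise verification that anyone would carry out; the paper itself does not give a proof of this lemma, stating only ``We leave the proof to the reader, pointing out that some of it appears in~\cite[Lemma~9.1]{buhler-exact categories}.'' Your sketch is therefore strictly more detailed than what appears in the paper, and your ``degreewise principle'' is exactly the right organizing observation to avoid repetitive diagram chases.
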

Note too that the notion of chain homotopy also makes sense since $\cat{A}$ is additive. So we have no trouble forming the homotopy category $K(\class{A})$ whose objects are the same as $\cha{A}$ but whose morphisms are homotopy classes of chain maps.
Given $X \in \cha{A}$, the
\emph{suspension of $X$}, denoted $\Sigma X$, is the complex given by
$(\Sigma X)_{n} = X_{n-1}$ and $(d_{\Sigma X})_{n} = -d_{n}$.  The
complex $\Sigma (\Sigma X)$ is denoted $\Sigma^{2} X$ and inductively
we define $\Sigma^{n} X$ for all positive integers. We also set $\Sigma^0 X = X$ and define $\Sigma^{-1}$ by shifting indices in the other direction. Given two chain complexes $X$ and $Y$ we define $\homcomplex(X,Y)$ to
be the complex of abelian groups $ \cdots \xrightarrow{} \prod_{k \in
\Z} \Hom(X_{k},Y_{k+n}) \xrightarrow{\delta_{n}} \prod_{k \in \Z}
\Hom(X_{k},Y_{k+n-1}) \xrightarrow{} \cdots$, where $(\delta_{n}f)_{k}
= d_{k+n}f_{k} - (-1)^n f_{k-1}d_{k}$.
This gives a functor
$\homcomplex(X,-) \mathcolon \cha{A} \xrightarrow{} \textnormal{Ch}(\Z)$. Note that if $\cat{A}$ is an exact category then this functor is left exact,
and it is exact if $X_{n}$ is projective for all $n$. (Recall here that projective means lifting over admissible epimorphisms, so these two claims follow from~\cite[Proposition~11.3]{buhler-exact categories}.) Similarly the contravariant functor $\homcomplex(-,Y)$ sends right
exact sequences to left exact sequences and is exact if $Y_{n}$ is
injective for all $n$. It is an exercise to check that the homology satisfies $H_n[Hom(X,Y)] = K(\class{A})(X,\Sigma^{-n} Y)$.

Now note that any additive $\cat{A}$ is an exact category when we consider it along with the class of all split exact sequences. Thus Lemma~\ref{lemma-ch(A)} tells us that $\cha{A}$, along with the short sequences which are degreewise split, form an exact category which we will denote by $\cha{A}_{dw}$. Following~\cite[Definition~10.1]{buhler-exact categories} a chain complex $X$ over any exact category $\cat{A}$ is called \emph{exact} (or \emph{acyclic}) if the differentials factor as $X_n \twoheadrightarrow Z_{n-1} \rightarrowtail X_{n-1}$ in such a way that each $Z_n \rightarrowtail X_n \twoheadrightarrow Z_{n-1}$ is exact. Note that in particular this implies that the differentials all have a kernel and an image, and that $\ker{d_n} = Z_n = \im{d_{n+1}}$. We will call a chain complex $X$ \emph{split exact} if it is exact in $\cha{A}_{dw}$. It is easy to see that the split exact complexes are characterized as follows. For a given $A \in \cat{A}$, we denote the \emph{$n$-disk on $A$} by $D^n(A)$. This is the complex consisting only of $A \xrightarrow{1_A} A$ concentrated in degrees $n$ and $n-1$. Note that for a given collection $\{A_n\}_{n \in \Z}$, the biproduct $\bigoplus_{n \in \Z} D^n(A_n) = \prod_{n \in \Z} D^n(A_n)$ always exists because it is really just a finite biproduct in each degree. Then a chain complex $X$ is split exact if and only if $X$ is isomorphic to a direct sum of $n$-disks on its cycles. That is,$$X \cong \bigoplus_{n \in \Z} D^n(Z_n) = \prod_{n \in \Z} D^n(Z_n).$$
One can also check that two chain maps are chain homotopic if and only if their difference factors through a contractible complex.

\begin{proposition}\label{prop-Ch(A)_dw is Frobenius}
For any additive category $\cat{A}$, the exact category $\cha{A}_{dw}$ is Frobenius. The projective-injective objects coincide with the contractible complexes and these are precisely the retracts of split exact complexes. Furthermore, the following statements are equivalent:
\begin{enumerate}
\item $\cat{A}$ is idempotent complete.
\item All contractible complexes are split exact.
\end{enumerate}
\end{proposition}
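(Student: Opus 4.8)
The plan is to organize everything around the $n$-disks and the two natural isomorphisms $\chain{\cat{A}}(D^n(A),X)\cong\cat{A}(A,X_n)$ and $\chain{\cat{A}}(X,D^n(A))\cong\cat{A}(X_{n-1},A)$, which hold because a chain map out of (resp.\ into) a disk is determined by a single component. Using these, together with the fact that the components of a degreewise split epimorphism (resp.\ monomorphism) are split epimorphisms (resp.\ split monomorphisms), one checks at once that every $D^n(A)$ is both projective and injective in $\cha{A}_{dw}$; and since the biproducts $\bigoplus_{n}D^n(A_n)=\prod_{n}D^n(A_n)$ exist (being finite in each degree), the same isomorphisms show these biproducts are projective--injective as well. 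For an arbitrary complex $X$, the map $\bigoplus_{n}D^n(X_n)\to X$ whose restriction to $D^m(X_m)$ corresponds to $1_{X_m}$ is a degreewise split epimorphism, and dually the map $X\to\prod_{n}D^{n+1}(X_n)$ whose projection to each $D^{n+1}(X_n)$ corresponds to $1_{X_n}$ is a degreewise split monomorphism; hence $\cha{A}_{dw}$ has enough projectives and enough injectives, and via such covers every projective (resp.\ injective) object is a retract of some $\bigoplus_{n}D^n(A_n)$. As these biproducts --- which are precisely the split exact complexes --- are projective--injective, and retracts of projectives (resp.\ injectives) are again projective (resp.\ injective) in any exact category, it follows that the classes of projective objects, injective objects, and retracts of split exact complexes all coincide; in particular $\cha{A}_{dw}$ is Frobenius.

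Next I would identify the retracts of split exact complexes with the contractible complexes. Each $D^n(A)$ is contractible (with contracting homotopy $1_A$ in the appropriate spot), a direct sum of contractible complexes is contractible (add the homotopies), and a retract of a contractible complex is contractible (conjugate the homotopy by the retraction data); hence retracts of split exact complexes are contractible. Conversely, given a contractible $X$ with contracting homotopy $s=(s_n\colon X_n\to X_{n+1})$, I would take the map $\psi\colon\bigoplus_n D^n(X_n)\to X$ from the previous paragraph together with the chain map $\varphi\colon X\to\bigoplus_n D^n(X_n)$ whose $D^n(X_n)$-component corresponds under the second isomorphism to $s_{n-1}\colon X_{n-1}\to X_n$, and compute that the degree-$m$ component of $\psi\varphi$ equals $d_{m+1}s_m+s_{m-1}d_m=1_{X_m}$. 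Thus $\psi\varphi=1_X$, exhibiting $X$ as a retract of the split exact complex $\bigoplus_n D^n(X_n)$, and therefore all four descriptions --- projective, injective, contractible, retract of a split exact complex --- agree, which settles the first assertion.

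For the equivalence of (1) and (2): if $\cat{A}$ is idempotent complete and $X$ is contractible with homotopy $s$, then each $e_n:=s_{n-1}d_n$ is idempotent, which follows from $d_{n-1}d_n=0$ together with the homotopy identity $d_n s_{n-1}=1_{X_{n-1}}-s_{n-2}d_{n-1}$. Splitting the $e_n$ --- possible since $\cat{A}$ is idempotent complete --- gives $X_n=Z_n\oplus\im e_n$, where $Z_n:=\im(1-e_n)$ is a kernel of $d_n$ and $d_n$ restricts to an isomorphism $\im e_n\xrightarrow{\sim}Z_{n-1}$; checking that these degreewise splittings are compatible with the differential identifies $X$ with a direct sum of disks on its cycle objects, so $X$ is split exact. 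Conversely, suppose every contractible complex is split exact, and let $p\colon A\to A$ be an idempotent. Then $\cdots\to A\xrightarrow{1-p}A\xrightarrow{p}A\xrightarrow{1-p}A\to\cdots$ is a complex (since $p(1-p)=0=(1-p)p$) which is contractible, with contracting homotopy $1_A$ in every degree because consecutive differentials sum to $1_A$; by hypothesis it is split exact, so its differentials have kernels, and in particular $p$ has a kernel. Hence $\cat{A}$ is idempotent complete.

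The step I expect to be the main obstacle is the implication (1)$\Rightarrow$(2) above: although it is immediate that the $e_n$ are idempotent and that they split, some care is needed to check that $d_n$ carries $\im e_n$ isomorphically onto $Z_{n-1}=\ker d_{n-1}$ and, above all, that the resulting degreewise splittings assemble into an \emph{honest} chain isomorphism of $X$ with a direct sum of disks rather than merely a degreewise one. Everything else reduces to the two disk--Hom isomorphisms and routine manipulation of the contracting-homotopy identities.
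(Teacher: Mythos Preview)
Your argument for the Frobenius structure is essentially the same as the paper's: both proceed via the disk complexes $D^n(A)$, the canonical degreewise split epimorphism $\bigoplus_n D^n(X_n)\twoheadrightarrow X$, and the observation that projectives are retracts of split exact complexes and hence also injective. You are somewhat more explicit than the paper in verifying the equivalence of ``contractible'' with ``retract of a split exact complex'' (the paper leaves the direction contractible $\Rightarrow$ projective-injective largely implicit), and your computation $\psi\varphi=1_X$ via the contracting homotopy is correct.

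The genuine difference is in the equivalence $(1)\Leftrightarrow(2)$. The paper simply invokes \cite[Proposition~10.9]{buhler-exact categories}, which says that an exact category is idempotent complete iff every contractible complex over it is acyclic; since the acyclic complexes in $\cha{A}_{dw}$ are exactly the split exact ones, the equivalence follows. You instead give a direct, self-contained argument: splitting the idempotents $e_n=s_{n-1}d_n$ for $(1)\Rightarrow(2)$, and building the periodic complex with differentials $p,1-p$ for $(2)\Rightarrow(1)$. Both directions are correct (for $(1)\Rightarrow(2)$ the key identities $d_n(1-e_n)=0$ and $d_ns_{n-1}|_{Z_{n-1}}=1$ follow from $d_ns_{n-1}=1-s_{n-2}d_{n-1}$, and the resulting degreewise decomposition does assemble into a chain isomorphism since the off-diagonal component of $d_n$ is exactly the isomorphism $\im e_n\xrightarrow{\sim}Z_{n-1}$). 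Your approach buys independence from B\"uhler's result at the cost of a small computation; the paper's approach is shorter but relies on the external reference.
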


Before the proof we give an example. Let $\cat{A}$ be the additive category of free modules for some ring $R$ for which there exists a projective module $P$ which is not free. Using the Eilenberg swindle~\cite[Corollary~2.7]{lam}, we can construct a free module $F$ for which $P \oplus F \cong F$. Using this construction, we can define a chain complex in $\cha{A}$

$$C = 0 \xrightarrow{} P \oplus F \xrightarrow{} F \xrightarrow{} 0$$ which is contractible, but it is not even exact. This complex $C$ looks like an $n$-disk, and it is projective, but it is not an $n$-disk as the map $P \oplus F \xrightarrow{} F$ is not the identity. The problem of course is that $\class{A}$ is not even weakly idempotent complete.

\begin{proof}
Note that any particular $D^n(A)$ is projective in $\cha{A}_{dw}$ by directly checking that any chain map $D^n(A) \xrightarrow{} Z$ must lift over a degreewise split epimorphism $Y \twoheadrightarrow Z$. Similarly, $D^n(A)$ is injective. Now for a given collection $\{A_n\}_{n \in \Z}$, the direct sum $\bigoplus_{n \in \Z} D^n(A_n)$ exists in the category $\cha{A}$, and since each summand is projective, so is $\bigoplus_{n \in \Z} D^n(A_n)$ by~\cite[Corollary~11.7]{buhler-exact categories}. But for that matter we have that the product $\prod_{n \in \Z} D^n(A_n) = \bigoplus_{n \in \Z} D^n(A_n)$ is injective as well. So indeed $\bigoplus_{n \in \Z} D^n(A_n)$ is projective-injective.

If $X$ is any chain complex, then one can construct an admissible epimorphism $\bigoplus_{n \in \Z} D^n(X_n) \twoheadrightarrow X$. So we have enough projectives. The dual gives enough injectives. Finally, if $P$ is projective in $\cha{A}_{dw}$,  then the admissible epimorphism $\bigoplus_{n \in \Z} D^n(P_n) \twoheadrightarrow P$ splits. In particular, any projective $P$ is contractible, and being a retract of the injective $\bigoplus_{n \in \Z} D^n(P_n)$, we see that $P$ must also be injective by the dual of~\cite[Corollary~11.4]{buhler-exact categories}.
This shows that all projectives are injective and a similar argument will show that all injectives are projective. So $\cha{A}_{dw}$ is Frobenius.

The two statements are equivalent by~\cite[Proposition~10.9]{buhler-exact categories} since the exact complexes in $\cha{A}_{dw}$ coincide with the split exact complexes.

\end{proof}

Of course if $\cat{A}$ is an exact category then we also want to consider model structures on the exact category $\cha{A}$, the exact structure having short exact sequences which are degreewise exact in $\cat{A}$ . Theo B\"uhler has pointed out to the author the following simple way to prove the result in the next corollary.

\begin{corollary}\label{cor-Ch(A) enough projectives}
Let $\cat{A}$ be an exact category. If $\cat{A}$ has enough projectives (resp. injectives) then so does $\cha{A}$. The projective (resp. injective) complexes coincide with the contractible complexes which have projective components and these are precisely the retracts of split exact complexes with projective components. Furthermore, the following statements are equivalent:
\begin{enumerate}
\item $\cat{A}$ is idempotent complete.
\item All projective complexes are split exact with projective components.
\end{enumerate}
\end{corollary}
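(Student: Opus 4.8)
The plan is to reduce the whole statement to one simple observation about the exact structure on $\cha{A}$, which I believe is B\"uhler's remark: a chain map $f \colon Y \to X$ is an admissible epimorphism in $\cha{A}$ if and only if each component $f_n \colon Y_n \to X_n$ is an admissible epimorphism in $\cat{A}$. One direction is immediate from the definition of the degreewise exact structure. For the converse, if each $f_n$ is an admissible epimorphism with kernel $k_n \colon K_n \rightarrowtail Y_n$, then from $f_{n-1} d^Y_n = d^X_n f_n$ one gets $f_{n-1}(d^Y_n k_n) = 0$, so $d^Y_n$ restricts to a differential on $K$; hence $K \rightarrowtail Y$ is a subcomplex, $K \rightarrowtail Y \twoheadrightarrow X$ is a degreewise conflation, and therefore a conflation in $\cha{A}$. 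The dual statement for admissible monomorphisms is the same.

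Granting this, I would proceed as follows. First, for $P$ projective in $\cat{A}$ the disk $D^n(P)$ is projective in $\cha{A}$, since a chain map $D^n(P) \to Y$ is the same datum as a morphism $P \to Y_n$ and such morphisms lift against degreewise admissible epimorphisms; and then $\bigoplus_n D^n(P_n)$, which exists in $\cha{A}$ because it is finite in each degree, is projective by \cite[Corollary~11.7]{buhler-exact categories}. For enough projectives: given $X \in \cha{A}$, choose admissible epimorphisms $p_n \colon P_n \twoheadrightarrow X_n$ with $P_n$ projective and form the chain map $\bigoplus_n D^n(P_n) \to X$ given in degree $n$ by $[\,p_n\ \ d^X_{n+1}p_{n+1}\,] \colon P_n \oplus P_{n+1} \to X_n$; this restricts to $p_n$ on a summand, hence is an admissible epimorphism in each degree, hence (by the observation) an admissible epimorphism in $\cha{A}$ with projective source. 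The injective assertion is dual. For the characterization: if $C$ is projective in $\cha{A}$, then the admissible epimorphism $\bigoplus_n D^n(Q_n) \twoheadrightarrow C$ just constructed (with $Q_n$ projective covering $C_n$) splits, so $C$ is a retract of the split exact complex $\bigoplus_n D^n(Q_n)$, which has projective components; and a retract of such a complex is contractible (retracts of contractible complexes are contractible) with projective components (retracts of projective objects are projective). Conversely, a contractible complex $C$ with projective components is, by Proposition~\ref{prop-Ch(A)_dw is Frobenius}, a retract in $\cha{A}_{dw}$ — hence in $\cha{A}$ — of $\bigoplus_n D^n(C_n)$, which is projective in $\cha{A}$ because each $C_n$ is; so $C$ is projective in $\cha{A}$. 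This identifies the projective complexes both with the contractible complexes having projective components and with the retracts of split exact complexes having projective components.

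It then remains to prove $(1) \Leftrightarrow (2)$. By the characterization, $(2)$ says exactly that every contractible complex with projective components is split exact, so $(1) \Rightarrow (2)$ is immediate from Proposition~\ref{prop-Ch(A)_dw is Frobenius}. For $(2) \Rightarrow (1)$ I would argue contrapositively: if $\cat{A}$ is not idempotent complete, take a non-split idempotent $e \colon A \to A$ and form the $2$-periodic alternating complex $\cdots \xrightarrow{e} A \xrightarrow{1_A-e} A \xrightarrow{e} A \xrightarrow{1_A-e} \cdots$, which is contractible but not split exact, and then use that $\cat{A}$ has enough projectives to replace it by a contractible, non-split-exact complex with \emph{projective} components, contradicting $(2)$; alternatively one can cite \cite[Proposition~10.9]{buhler-exact categories} directly for the exact category $\cha{A}$, exactly as it was used for $\cha{A}_{dw}$ in the proof of Proposition~\ref{prop-Ch(A)_dw is Frobenius}, once the projective objects of $\cha{A}$ have been identified. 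I expect this last step to be the main obstacle: passing from an arbitrary non-split idempotent to one carried by a projective object (so that the witnessing complex has projective components) is the only point that is not purely formal. Everything else follows mechanically once the degreewise description of admissible epimorphisms in $\cha{A}$ is in hand.
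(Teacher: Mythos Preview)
Your argument is essentially correct and close to the paper's, but there is one genuine soft spot and one place where the paper's route is cleaner.

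\textbf{The ``hence'' in your enough-projectives argument.} You assert that $[\,p_n\ \ d^X_{n+1}p_{n+1}\,]$ ``restricts to $p_n$ on a summand, hence is an admissible epimorphism in each degree.'' The implicit principle here is: if $g\circ i$ is an admissible epi for a split mono $i$, then $g$ is an admissible epi. But by B\"uhler's Proposition~7.6 this is \emph{equivalent} to weak idempotent completeness, and the corollary is stated for arbitrary exact categories. The fix is easy --- since $P_{n+1}$ is projective you can lift $d^X_{n+1}p_{n+1}$ along $p_n$ and rewrite the map as $p_n\circ[1,h]$, a composite of admissible epis --- but the paper (following B\"uhler's suggestion) sidesteps the issue entirely by factoring your map as
\[
\bigoplus_n D^n(P_n) \twoheadrightarrow \bigoplus_n D^n(X_n) \twoheadrightarrow X,
\]
where the first map is degreewise a finite direct sum of admissible epis (hence admissible) and the second is the degreewise split epi from Proposition~\ref{prop-Ch(A)_dw is Frobenius}. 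Your map is literally this composite; you just did not exploit the factorization. So your key observation about degreewise admissible epis is correct and is used implicitly by the paper as well --- the paper simply arranges things so that each factor is admissible for transparent reasons.

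\textbf{On $(2)\Rightarrow(1)$.} You are right to flag this as the delicate point. By the characterization, $(2)$ says precisely that every contractible complex over the full subcategory $\cat{P}$ of projectives is split exact, which by Proposition~\ref{prop-Ch(A)_dw is Frobenius} (equivalently B\"uhler~10.9 for $\cat{P}$) means $\cat{P}$ is idempotent complete. Your citation of B\"uhler~10.9 therefore only gets you to ``$\cat{P}$ is idempotent complete'', not to ``$\cat{A}$ is idempotent complete''; and your contrapositive construction needs exactly the step you worry about, producing a non-split idempotent on a \emph{projective} object from one on an arbitrary object. The paper itself is no more explicit here --- it simply says the remaining argument is ``similar in spirit'' to Proposition~\ref{prop-Ch(A)_dw is Frobenius} --- so you have correctly located where the real work lies, even if neither you nor the paper spells it out.
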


\begin{proof}
Given any $X$ in $\cha{A}$, we can as in the proof of Proposition~\ref{prop-Ch(A)_dw is Frobenius} find a degreewise split $\bigoplus_{n \in \Z} D^n(X_n) \twoheadrightarrow X$. Of course this must be an admissible epimorphisms in $\cha{A}$ as well. Since $\cat{A}$ has enough projectives we can also find an admissible $P_n \twoheadrightarrow X_n$ for each $n$. $\bigoplus_{n \in \Z} D^n(P_n) \twoheadrightarrow \bigoplus_{n \in \Z} D^n(X_n)$ is an admissible epimorphism. The composition of the two admissible epimorphisms proves the result. The rest of the proof is similar in spirit to that of Proposition~\ref{prop-Ch(A)_dw is Frobenius}.
\end{proof}

Being an exact category, $\cha{A}_{dw}$ comes with a Yoneda Ext functor, which we will denote by $\Ext^1_{dw}$.
The following lemma gives a well-known connection between $\Ext^1_{dw}$ and the hom-complex $\homcomplex$.
\begin{lemma}\label{lemma-homcomplex-basic-lemma}
For chain complexes $X$ and $Y$, we have isomorphisms:
$$\Ext^1_{dw}(X,\Sigma^{(-n-1)}Y) \cong H_n \homcomplex(X,Y) =
K(\cat{A})(X,\Sigma^{-n} Y)$$ In particular, for chain complexes $X$ and $Y$, $\homcomplex(X,Y)$ is
exact iff for any $n \in \Z$, any chain map $f \mathcolon \Sigma^nX \xrightarrow{} Y$ is
homotopic to 0 (or iff any chain map $f \mathcolon X \xrightarrow{} \Sigma^nY$ is homotopic
to 0).
\end{lemma}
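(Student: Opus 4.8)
The plan is to reduce the claimed isomorphisms to the already-stated computations. We are told in the text (just before Proposition~\ref{prop-Ch(A)_dw is Frobenius}) that $H_n[\homcomplex(X,Y)] = K(\cat{A})(X,\Sigma^{-n} Y)$, so the second isomorphism in the displayed chain is already in hand. Thus the only thing to prove is the first isomorphism $\Ext^1_{dw}(X,\Sigma^{(-n-1)}Y) \cong H_n\homcomplex(X,Y)$, and, given the first, the final ``in particular'' clause is a purely formal consequence: $\homcomplex(X,Y)$ is exact iff $H_n\homcomplex(X,Y) = 0$ for all $n$, iff $K(\cat{A})(\Sigma^n X, Y) = 0$ for all $n$ (reindexing), iff every chain map $\Sigma^n X \to Y$ is null-homotopic; and the parenthetical variant follows by the same reindexing since $K(\cat{A})(\Sigma^n X, Y) \cong K(\cat{A})(X, \Sigma^{-n} Y)$.

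For the first isomorphism I would proceed directly by identifying $1$-cocycles and coboundaries. Fix $n$ and consider a degreewise split short exact sequence $0 \to \Sigma^{(-n-1)} Y \rightarrowtail E \twoheadrightarrow X \to 0$ representing a class in $\Ext^1_{dw}(X, \Sigma^{(-n-1)}Y)$. Because the sequence is degreewise split, in each degree $k$ we may choose a splitting $X_k \to E_k$, so that $E_k \cong X_k \oplus (\Sigma^{(-n-1)}Y)_k = X_k \oplus Y_{k+n+1}$; the differential of $E$ then has the matrix form $\begin{pmatrix} d_X & 0 \\ s_k & d_{\Sigma^{(-n-1)}Y} \end{pmatrix}$ for some maps $s_k : X_k \to Y_{k+n}$. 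The condition that this be a differential ($d^2 = 0$) translates exactly into the statement that $(s_k)_k$ is a cycle in degree $n$ of the complex $\homcomplex(X,Y)$, i.e. $\delta_n(s) = 0$ — here one uses the definition $(\delta_n f)_k = d_{k+n} f_k - (-1)^n f_{k-1} d_k$ given in the preliminaries, with the sign bookkeeping matching that of the suspension $d_{\Sigma^{-1}} = -d$. Then I would check that changing the choice of degreewise splittings alters $(s_k)_k$ precisely by a coboundary $\delta_{n+1}$ of an element of $\prod_k \Hom(X_k, Y_{k+n+1})$, and conversely that isomorphic extensions (as elements of $\Ext^1_{dw}$) give cohomologous cocycles. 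This sets up a well-defined map $\Ext^1_{dw}(X,\Sigma^{(-n-1)}Y) \to H_n\homcomplex(X,Y)$; the inverse is visible from the same formula, since any cocycle $(s_k)$ builds an extension via the above matrix. Additivity (so that it is a group isomorphism, matching the Baer sum on $\Ext^1$) is then a routine check.

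The main obstacle I anticipate is purely the sign and indexing bookkeeping: one must verify that the shift by $\Sigma^{(-n-1)}$ (rather than $\Sigma^{-n}$) is the correct one to make $d_E^2 = 0$ coincide with the cocycle condition in degree $n$ of $\homcomplex(X,Y)$ as normalized in the text, and that the $(-1)^n$ in $\delta_n$ is consistent with the sign $(d_{\Sigma X})_n = -d_n$ in the suspension. Once the dictionary ``degreewise-split extension $\leftrightarrow$ matrix differential $\leftrightarrow$ element of $\ker \delta_n$'' is pinned down with correct signs, everything else is formal. (Alternatively, one could cite this as the standard fact identifying $\Ext^1$ in $\cha{A}_{dw}$ with $H$ of the hom-complex — which the text itself calls ``well-known'' — and merely indicate the splitting-matrix argument; I would include at least the matrix computation so the sign conventions are transparent to the reader.)
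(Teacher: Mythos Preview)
Your proposal is correct and follows the standard splitting-matrix argument; note, however, that the paper does not actually supply a proof of this lemma at all --- it is stated as ``well-known'' and left to the reader. Your write-up is exactly the kind of argument one would give if pressed, and your caution about the sign/index bookkeeping (matching $\Sigma^{(-n-1)}$ to the cocycle condition in degree $n$) is the only place where care is genuinely needed.
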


In particular, we note that if $\cat{A}$ is an exact category, then Lemma~\ref{lemma-ch(A)} automatically provides us with the two exact structures: $\cha{A}$ and $\cha{A}_{dw}$. For the first, the Yoneda Ext group $\Ext^1_{\cha{A}}(X,Y)$ is the group of (equivalence classes)
of all admissible short exact sequences $Y \rightarrowtail Z \twoheadrightarrow X$ under the Baer
sum. The Yoneda Ext group $\Ext^1_{dw}(X,Y)$ is the subgroup of $\Ext^1_{\cha{A}}(X,Y)$ consisting precisely of those admissible short exact sequences which are split in each
degree.


We will say that a class of chain complexes $\class{X}$ is \emph{closed under suspensions} if for any $X \in \class{X}$ we have $\Sigma^n X \in \class{X}$ for any integer $n$. We say that $\class{X}$ is \emph{closed under positive suspensions} if $\Sigma^n X \in \class{X}$ whenever $n \geq 0$, and \emph{closed under negative suspensions} if $\Sigma^n X \in \class{X}$ whenever $n \leq 0$. Recall that in any Frobenius category $\cat{A}$, the formal suspension $\Sigma A$ of an object $A$ is defined to be an object fitting into a short exact sequence $A \rightarrowtail W \twoheadrightarrow \Sigma A$ where $W$ is injective. This cosyzygy, $\Sigma A$, is unique up to a unique isomorphism in the stable category, and the dual notion of the loop functor $\Omega A$ serves as the inverse. Clearly, for a chain complex $X$, the complex $\Sigma X$ coincides with this formal notion of suspension while $\Sigma^{-1} X$ serves as the formal loop. This is seen inside the proof of then next lemma.

\begin{lemma}\label{lemma-suspensions are cosyzygies}
Let $\class{X}$ be a class of chain complexes. Assume $\class{X}$ is closed under direct sums and direct summands and contains the contractible complexes. Then
\begin{enumerate}
\item $\class{X}$ is closed under positive suspensions if and only if $\class{X}$ is cosyzygy closed in the exact category $\cha{A}_{dw}$.
\item $\class{X}$ is closed under negative suspensions if and only if $\class{X}$ is syzygy closed in the exact category $\cha{A}_{dw}$.
\end{enumerate}
\end{lemma}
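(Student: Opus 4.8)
The plan is to reduce the whole statement to a single observation: for any chain complex $X$, the suspension $\Sigma X$ is a genuine cosyzygy of $X$ in the Frobenius category $\cha{A}_{dw}$, realized by an explicit degreewise split short exact sequence. First I would recall the mapping cone of the identity map $1_X \mathcolon X \to X$: the complex $C(1_X)$, with $C(1_X)_n = X_{n-1} \oplus X_n$ and the usual cone differential, is a contractible complex — in fact isomorphic to the split exact complex $\bigoplus_{n \in \Z} D^n(X_{n-1})$ — and there is a canonical short exact sequence $X \rightarrowtail C(1_X) \twoheadrightarrow \Sigma X$ which is split in each degree, being degreewise of the shape $X_n \rightarrowtail X_{n-1} \oplus X_n \twoheadrightarrow X_{n-1}$. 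By Proposition~\ref{prop-Ch(A)_dw is Frobenius} a contractible complex is injective (and projective) in $\cha{A}_{dw}$, so this sequence exhibits $\Sigma X$ as a cosyzygy of $X$; shifting indices, the sequence $\Sigma^{-1} X \rightarrowtail C(1_{\Sigma^{-1}X}) \twoheadrightarrow X$ exhibits $\Sigma^{-1} X$ as a syzygy of $X$. This is the assertion flagged just before the statement, and it is the step I expect to require the most care, although it is entirely routine.

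Next I would record that, under the standing hypotheses on $\class{X}$, whether a cosyzygy of $X$ lies in $\class{X}$ depends only on $X$ and not on the chosen cosyzygy. Indeed, if $X \rightarrowtail W \twoheadrightarrow Y$ and $X \rightarrowtail W' \twoheadrightarrow Y'$ are admissible short exact sequences in $\cha{A}_{dw}$ with $W, W'$ injective, then the usual pushout/Schanuel comparison in a Frobenius category yields $Y \oplus W' \cong Y' \oplus W$. Since $\class{X}$ contains the contractible complexes (hence all injective objects of $\cha{A}_{dw}$), is closed under direct sums, and is closed under direct summands, it follows that $Y \in \class{X}$ forces $Y' \in \class{X}$. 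Consequently ``$\class{X}$ is cosyzygy closed in $\cha{A}_{dw}$'' is equivalent to the statement that $\Sigma X \in \class{X}$ for every $X \in \class{X}$, and dually ``$\class{X}$ is syzygy closed'' is equivalent to $\Sigma^{-1} X \in \class{X}$ for every $X \in \class{X}$.

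Finally I would assemble (1), with (2) being formally dual. If $\class{X}$ is closed under positive suspensions then in particular $\Sigma X \in \class{X}$ for all $X \in \class{X}$, so the sequence $X \rightarrowtail C(1_X) \twoheadrightarrow \Sigma X$ from the first step witnesses that $\class{X}$ is cosyzygy closed. Conversely, if $\class{X}$ is cosyzygy closed, then by the second step $\Sigma X \in \class{X}$ whenever $X \in \class{X}$; iterating, $\Sigma^n X \in \class{X}$ for all $n \geq 0$ by induction on $n$, with base case $\Sigma^0 X = X \in \class{X}$, which is precisely closure under positive suspensions. The only genuine work is the explicit cone/disk construction of the first step together with the Schanuel-type comparison of the second; once those are available the two equivalences drop out immediately.
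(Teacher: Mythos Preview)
Your proof is correct and follows essentially the same approach as the paper: both use the explicit degreewise split sequence $X \rightarrowtail C(1_X) \twoheadrightarrow \Sigma X$ (the paper writes the middle term as $\bigoplus_{n} D^{n+1}(X_n)$, which is the same complex) together with Schanuel's Lemma to compare any two cosyzygies. Your organization, isolating the Schanuel step as the general principle that cosyzygy closure is equivalent to $\Sigma X \in \class{X}$ for all $X \in \class{X}$, is a minor repackaging of the paper's argument rather than a different route.
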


\begin{proof}
We prove (1). First say $\class{X}$ is cosyzygy closed in $\cha{A}_{dw}$. Then for any $X \in \class{X}$ we have an obvious degreewise split short exact sequence $X \rightarrowtail \oplus_{n \in \Z} D^{n+1}(X_n) \twoheadrightarrow \Sigma X$. Since $\oplus_{n \in \Z} D^{n+1}(X_n)$ is contractible it is in $\class{X}$ and by hypothesis we get the suspension $\Sigma X \in \class{X}$ too. On the other hand, let $\class{X}$ be closed under positive suspensions and let $X \in \class{X}$. Say we have an exact $X \rightarrowtail W \twoheadrightarrow Z$ where $W$ is injective in $\cha{A}_{dw}$.  We wish to show $Z \in \class{X}$. But as above we also have the obvious degreewise split short exact sequence $X \rightarrowtail \oplus_{n \in \Z} D^{n+1}(X_n) \twoheadrightarrow \Sigma X$. So by Schanuel's Lemma~\ref{lemma-schanuel's lemma} we get an isomorphism of complexes $\oplus_{n \in \Z} D^{n+1}(X_n) \oplus Z \cong W \oplus \Sigma X$. The hypotheses imply $W \oplus \Sigma X \in \class{X}$ and so $\oplus_{n \in \Z} D^{n+1}(X_n) \oplus Z \in \class{X}$, and therefore $Z \in \class{X}$.

\end{proof}

\begin{lemma}[Schanuel's Lemma]\label{lemma-schanuel's lemma}
Let $\cat{A}$ be an exact category with enough injectives and let $A \in
 \cat{A}$. Given two exact sequences $A \rightarrowtail I_1 \twoheadrightarrow Z_1 $ and $A \rightarrowtail I_2 \twoheadrightarrow Z_2 $ where $I_1$ and $I_2$ are injective we have $I_1 \oplus Z_2 \cong I_2 \oplus Z_1$. The dual statement holds whenever $\cat{A}$ has enough projectives.

\end{lemma}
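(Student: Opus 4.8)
The plan is to carry out the classical Schanuel argument, but replacing the use of an ambient abelian structure by the pushout construction that any exact category provides. Begin with the two admissible short exact sequences $A \rightarrowtail I_1 \twoheadrightarrow Z_1$ and $A \rightarrowtail I_2 \twoheadrightarrow Z_2$, and form the pushout $P$ of the two admissible monomorphisms $A \rightarrowtail I_1$ and $A \rightarrowtail I_2$ along one another. By the exact category axioms (the pushout of an admissible monomorphism along an arbitrary map exists, its opposite leg is again an admissible monomorphism, and the resulting square is bicartesian with isomorphic cokernels; see~\cite[Proposition~2.12]{buhler-exact categories}), the canonical maps $I_1 \rightarrowtail P$ and $I_2 \rightarrowtail P$ are admissible monomorphisms with $\cok(I_1 \rightarrowtail P) \cong \cok(A \rightarrowtail I_2) = Z_2$ and $\cok(I_2 \rightarrowtail P) \cong \cok(A \rightarrowtail I_1) = Z_1$. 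So we have produced two admissible short exact sequences $I_1 \rightarrowtail P \twoheadrightarrow Z_2$ and $I_2 \rightarrowtail P \twoheadrightarrow Z_1$.

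Next I would invoke injectivity to split these. Since $I_1$ is injective, the identity map $I_1 \xrightarrow{} I_1$ lifts along the admissible monomorphism $I_1 \rightarrowtail P$, yielding a retraction, hence a splitting $P \cong I_1 \oplus Z_2$. Symmetrically, injectivity of $I_2$ splits $I_2 \rightarrowtail P \twoheadrightarrow Z_1$, giving $P \cong I_2 \oplus Z_1$. Combining the two, $I_1 \oplus Z_2 \cong P \cong I_2 \oplus Z_1$, which is the assertion. The statement for a category with enough projectives is proved dually: form the pullback $Q$ of the two given admissible epimorphisms onto $A$, note that its two legs are admissible epimorphisms with kernels the two syzygies, and use projectivity of the $P_i$ to split the resulting sequences.

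I do not expect a serious obstacle here. The only input beyond pure diagram-chasing is the exact-category fact that pushouts along admissible monomorphisms behave as in the abelian case (existence, admissibility of the opposite leg, and the cokernel isomorphism), which is standard and recorded in~\cite{buhler-exact categories}; note also that the hypothesis ``enough injectives'' is not actually used in the argument once the two sequences are given. The one point worth keeping in mind is that ``injective'' in this paper means having the lifting property with respect to admissible monomorphisms, so an injective object only splits \emph{admissible} short exact sequences — but the sequences $I_1 \rightarrowtail P \twoheadrightarrow Z_2$ and $I_2 \rightarrowtail P \twoheadrightarrow Z_1$ constructed above are admissible precisely by the pushout axiom, so this causes no difficulty.
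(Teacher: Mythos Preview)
Your proof is correct and follows essentially the same route as the paper: form the pushout of the two admissible monomorphisms out of $A$, obtain the two admissible short exact sequences $I_1 \rightarrowtail P \twoheadrightarrow Z_2$ and $I_2 \rightarrowtail P \twoheadrightarrow Z_1$, and split each using injectivity of $I_1$, $I_2$. Your side remark that ``enough injectives'' is not actually needed once the two sequences are given is also accurate; the paper's proof likewise makes no use of it.
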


\begin{proof}
Take the pushout diagram shown below.
$$\begin{tikzcd}
A \arrow[tail]{r} \arrow[tail]{d} & I_1 \arrow[two heads]{r} \arrow[tail]{d} & Z_1 \arrow[equal]{d} \\
I_2 \arrow[tail]{r} \arrow[two heads]{d} & P \arrow[two heads]{r} \arrow[two heads]{d} & Z_1  \\
Z_2 \arrow[equal]{r} & Z_2
\end{tikzcd}$$ Since $I_1$ and $I_2$ are injective we get both $P \cong I_1 \oplus Z_2$ and $P \cong I_2 \oplus Z_1$. Weak idempotent completeness is not needed to get the direct sum; the sequences are all kernel-cokernel pairs.
\end{proof}

\section{Recollements from exact model structures}\label{sec-recollements from exact models strucs}

The starting point of this paper is a more general and practical version of Theorem~4.6 from~\cite{gillespie-recollements} which we prove in this section. It produces a recollement situation from three injective cotorsion pairs.  The first key to this version is the realization that all the results of Sections~3 and~4 of~\cite{gillespie-recollements} hold, with the same proofs, if one replaces the word ``abelian'' with the more general notion of ``WIC exact'' as described in Section~\ref{sec-preliminaries}. So in particular, we have in this setting Becker's right and left localization constructions from~\cite{becker}, which correspond to left and right Bousfield localization. Briefly, given two injective cotorsion pairs $\class{M}_1 = (\class{W}_1,\class{F}_1)$ and $\class{M}_2 = (\class{W}_2,\class{F}_2)$ with $\class{F}_2 \subseteq \class{F}_1$, Becker defined their \emph{right localization}, denoted $\class{M}_1/\class{M}_2$, to be the (exact) model structure given by the Hovey triple $\class{M}_1/\class{M}_2 = (\class{W}_2,\class{W},\class{F}_1)$. That is, he explicitly describes a thick class $\class{W}$ making $(\class{W}_2 \cap \class{W} , \class{F}_1)$ and $(\class{W}_2, \class{W} \cap \class{F}_1)$ each complete cotorsion pairs. Moreover, he shows that this is the right Bousfield localization of $\class{M}_1$ by $\class{M}_2$. The second key feature to Theorem~\ref{them-recollements in krause form} is a cosmetic change from~\cite[Theorem~4.6]{gillespie-recollements} which results by focusing on the homotopy category being a triangulated localization. We describe this aspect more now.

\subsection{The triangulated localization $\cat{A}/\class{W}$}\label{subsec-triangulated localization}

Let $\cat{A}$ be a WIC exact category. We wish to see that the homotopy category of an exact model structure on $\cat{A}$ satisfies a universal property saying that it is the triangulated localization of $\cat{A}$ with respect to the class $\class{W}$ of trivial objects. This is fundamental but doesn't seem to be approached directly in the literature. Since it is easiest to explain in the context of the special \emph{injective} model structures, and since this is all we will need for this paper (as well as the projective duals), we will only focus on this special case. First, a lemma.

\begin{lemma}\label{lemma-characterization of weak equivalences in exact categories}
Say $\cat{A}$ is a WIC exact category with an exact model structure. Let $\class{W}$ denote the class of trivial objects. Then a map $f$ is a weak equivalence if and only if it factors as an admissible monomorphism with cokernel in $\class{W}$ followed by an admissible epimorphism with kernel in $\class{W}$.
\end{lemma}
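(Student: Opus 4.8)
\emph{Proof plan.} The ``only if'' direction is immediate from the dictionary of Theorem~\ref{them-Hovey's theorem for WIC-exact categories}: a weak equivalence factors as $f=pi$ with $i$ a trivial cofibration --- hence an admissible monomorphism with cokernel in $\class{Q}\cap\class{W}\subseteq\class{W}$ --- and $p$ a trivial fibration --- hence an admissible epimorphism with kernel in $\class{R}\cap\class{W}\subseteq\class{W}$. For the converse, the plan is to reduce to two special cases: (i) every admissible monomorphism $i$ with $\cok i\in\class{W}$ is a weak equivalence, and (ii) every admissible epimorphism $p$ with $\ker p\in\class{W}$ is a weak equivalence. Since weak equivalences are closed under composition, a map $f=pi$ of the stated form is then automatically a weak equivalence.

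To prove (i) I would factor $i\mathcolon X\rightarrowtail Z$ in the model structure as $i=qj$ with $j\mathcolon X\rightarrowtail T$ a cofibration and $q\mathcolon T\twoheadrightarrow Z$ a trivial fibration. A cofibration with cokernel in $\class{W}$ is by definition a trivial cofibration, and $q$ is already a weak equivalence, so it suffices to show $\cok j\in\class{W}$. Form the pullback $P$ of the admissible epimorphism $q$ along $i$. By standard properties of exact categories (see~\cite{buhler-exact categories}), $P\twoheadrightarrow X$ is an admissible epimorphism with kernel isomorphic to $\ker q$, and $P\rightarrowtail T$ is an admissible monomorphism with cokernel isomorphic to $\cok i$ --- the pullback square built from an admissible monomorphism and an admissible epimorphism is also a pushout square. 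The pair $(j,1_X)$ induces a section of $P\twoheadrightarrow X$, so because $\cat{A}$ is weakly idempotent complete this split epimorphism yields $P\cong X\oplus\ker q$, under which $j$ becomes the composite $X\hookrightarrow X\oplus\ker q\cong P\rightarrowtail T$. The short exact sequence relating the cokernels of two composable admissible monomorphisms then produces a short exact sequence $\ker q\rightarrowtail\cok j\twoheadrightarrow\cok i$ with both outer terms in $\class{W}$; since $\class{W}$ is thick, $\cok j\in\class{W}$.

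Statement (ii) will follow by the dual argument: factor $p=q'j'$ with $j'$ a trivial cofibration and $q'$ a fibration, form the pushout of $j'$ along $p$, use weak idempotent completeness to split off a copy of $\cok j'$, and extract a short exact sequence $\ker p\rightarrowtail\ker q'\twoheadrightarrow\cok j'$; thickness of $\class{W}$ then forces $\ker q'\in\class{W}$, so $q'$ is a trivial fibration and $p=q'j'$ is a weak equivalence.

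I expect the diagram-chasing inside (i) and (ii) --- checking which induced maps are admissible and computing the relevant kernels and cokernels from the pullback (respectively pushout) --- to be the only delicate point, though it is routine exact-category bookkeeping as in~\cite{buhler-exact categories}. The one genuinely structural ingredient, beyond Theorem~\ref{them-Hovey's theorem for WIC-exact categories} and the thickness of $\class{W}$, is weak idempotent completeness: it is exactly what turns the canonical section (respectively retraction) coming from a factorization into a direct-sum decomposition, and hence what makes these kernel and cokernel computations go through.
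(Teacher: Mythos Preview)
Your proposal is correct and follows essentially the same route as the paper: both reduce the ``if'' direction to showing that an admissible monomorphism with trivial cokernel (and dually, an admissible epimorphism with trivial kernel) is a weak equivalence, then compose. The only difference is that the paper simply cites this key fact---it is Hovey's~\cite[Lemma~5.8]{hovey} carried over to WIC exact categories in~\cite{gillespie-exact model structures}---whereas you supply a direct self-contained argument for it via the cofibration/trivial-fibration factorization and a pullback computation; your argument is a valid proof of that cited lemma.
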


\begin{proof}
Hovey proved in~\cite[Lemma~5.8]{hovey} that in an abelian model category, a monomorphism is a weak equivalence iff its cokernel is trivial. The author checked that the analogous Lemma holds in the language of exact categories when writing~\cite{gillespie-exact model structures}. So the current lemma follows from this and an application of the two out of three axiom.

\end{proof}

Note that the Lemma above says that if $\cat{A}$ has an exact model structure given by the Hovey triple $\class{M} = (\class{Q},\class{W},\class{R})$, then the weak equivalences are completely determined by the class $\class{W}$ of trivial objects. It therefore makes sense to denote the homotopy category $\Ho{\class{M}}$, which is the localization with respect to the weak equivalences, by $\cat{A}/\class{W}$. We will do this. Why not use $\Ho{\class{M}}$? The answer is that we will use both because a recollement situation is the result of having 5 model structures on $\cat{A}$ at the same time, some having the same class $\class{W}$ as the trivial objects. Their localizations $\cat{A}/\class{W}$ are the same. It is their different cofibrant-fibrant objects that allow for different representations of this localization. It is therefore much more convenient to let $\cat{A}/\class{W}$ denote the localization, while $\Ho{\class{M}}$ denotes the localization with all the added structure that comes with having a model structure. It is this perspective which is the key to proving the recollement diagram in Theorem~\ref{them-recollements in krause form} and this diagram is more in line with the applications appearing in the literature.

So to emphasize, $\gamma : \cat{A} \xrightarrow{} \cat{A}/\class{W}$, will denote the canonical functor to the localization category $\cat{A}/\class{W}$, where we have formally inverted the weak equivalences. When confusion can arise from more than one possible class of trivial objects, we will denote $\gamma$ by $\gamma_{\class{W}}$. The existence of a Hovey triple $\class{M} = (\class{Q},\class{W},\class{R})$ implies that $\cat{A}/\class{W}$ exists as an actual category (with small Hom sets). The fundamental theorem states $$\cat{A}/\class{W} = \Ho{\class{M}} \cong (\class{Q} \cap \class{R})/\sim \,$$ where the $\sim$ is characterized as in part~(5) of Proposition~\ref{prop-left and right homotopic maps in exact model structures}.

Of course having a Hovey triple $\class{M} = (\class{Q},\class{W},\class{R})$ means that $\cat{A}/\class{W}$ has much more structure that just being a category. The homotopy category of any category with a zero object must be a triangulated category as shown in~\cite[Chapter~7]{hovey-model-categories}. Here are some basic facts on the triangulated structure that the author has taken from~\cite[p.~586]{hovey}. First, the suspension $\Sigma X$, of a cofibrant object $X \in \class{Q}$, is computed by taking the cokernel of a cofibration $X \hookrightarrow CX$ where $CX$ is trivial. (A different choice of $CX$ would result in a weak equivalence between the two possible $\Sigma X$'s.) Second, suspensions can be computed in \emph{any} model structure since they are preserved by Quillen equivalences. As a consequence, if we have more that one model structure on $\cat{A}$ with trivial objects $\class{W}$, then $\cat{A}/\class{W}$ has the same triangulated structure regardless of the model structure we are using.

Since all of our localizations $\cat{A}/\class{W}$ can be realized from an injective (or projective) cotorsion pair, we summarize some important information in the following Proposition.

\begin{proposition}\label{prop-triangulated localization}
Let $\class{M} = (\class{W},\class{F})$ be an injective cotorsion pair in a WIC exact category $\cat{A}$ with enough injectives.
\begin{enumerate}
\item $\class{F}$ naturally inherits the structure of a Frobenius category with the projective-injective objects being precisely the injectives from $\cat{A}$.

\item The functor $\gamma : \cat{A} \xrightarrow{\gamma_{\class{W}}} \cat{A}/\class{W} = \Ho{\class{M}} \cong \class{F}/\sim$ is exact in the sense that it takes short exact sequences in $\cat{A}$ to exact triangles in $\Ho{\cat{M}}$.

\item $\gamma$ is universal among triangulated categories $\class{T}$ which ``kill'' $\class{W}$. That is, given another exact $F : \cat{A} \xrightarrow{} \class{T}$ with $F(\class{W})=0$, it factors through $\gamma$.
\end{enumerate}

\end{proposition}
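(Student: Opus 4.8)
The plan is to obtain all three items from results already assembled in the excerpt. For (1), an injective cotorsion pair $(\class{W},\class{F})$ has $\class{W}\cap\class{F}$ equal to the class of injectives of $\cat{A}$, and $\class{F}$ is closed under extensions (being the right half of a cotorsion pair) hence inherits an exact structure from $\cat{A}$; I would check that in this inherited exact structure $\class{F}$ has enough injectives and enough projectives, both coinciding with $\class{W}\cap\class{F}$. Enough injectives is immediate since $\cat{A}$ has enough injectives and $\class{F}$ is closed under injective envelopes up to the cokernel lying in $\class{F}$; enough projectives follows by completeness of the cotorsion pair: for $F\in\class{F}$ take a special $\class{F}$-precover, i.e. an admissible epimorphism $W'\twoheadrightarrow F$ with $W'\in\class{W}$, kernel in $\class{F}$, and observe $W'\in\class{W}\cap\class{F}$ since $\class{F}$ is closed under admissible sub/quotients by thickness of $\class{W}$ plus $F\in\class{F}$. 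Then a Frobenius structure on $\class{F}$ is equivalent to the statement that its projectives and injectives coincide, which is $\class{W}\cap\class{F}$ in both cases; this is essentially the same bookkeeping already done in the proof of Proposition~\ref{prop-Ch(A)_dw is Frobenius}, and in the abelian case it is in Section~3 of~\cite{gillespie-recollements}, which carries over verbatim.

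For (2), since $\cat{A}$ has enough injectives the injective cotorsion pair is equivalent to an exact (injective) model structure $\class{M}$ on $\cat{A}$, and by the Fundamental Theorem quoted after Lemma~\ref{lemma-characterization of weak equivalences in exact categories} we have $\cat{A}/\class{W}=\Ho{\class{M}}\cong(\class{Q}\cap\class{R})/\sim\,=\class{F}/\sim$, the last equality because in an injective model structure every object is cofibrant, so $\class{Q}=\cat{A}$ and $\class{Q}\cap\class{R}=\class{R}=\class{F}$. That $\gamma$ sends admissible short exact sequences to exact triangles is the standard fact that a short exact sequence in an exact model structure is, up to weak equivalence, a (co)fiber sequence: I would take an admissible short exact sequence $A\rightarrowtail B\twoheadrightarrow C$, use completeness to replace it up to levelwise weak equivalence by one whose terms are fibrant (equivalently, work already inside $\class{F}$ using (1)), and then invoke that in the pointed model category $\class{M}$ the image of a cofiber sequence under $\gamma$ is a distinguished triangle in $\Ho{\class{M}}$ — this is~\cite[Chapter~7]{hovey-model-categories} together with the remarks the author cites from~\cite[p.~586]{hovey} on how suspension is computed as the cokernel of $X\hookrightarrow CX$ with $CX$ trivial. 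The only real content is matching the exact-category cofiber sequence with the model-categorical one, which Lemma~\ref{lemma-characterization of weak equivalences in exact categories} makes routine.

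For (3), I would argue by the universal property of localization combined with the Brown–Verdier-type observation that an exact functor $F:\cat{A}\to\class{T}$ with $F(\class{W})=0$ must invert every weak equivalence: by Lemma~\ref{lemma-characterization of weak equivalences in exact categories} a weak equivalence factors as an admissible monomorphism with cokernel in $\class{W}$ followed by an admissible epimorphism with kernel in $\class{W}$, and under an exact $F$ each of these becomes a base of an exact triangle in $\class{T}$ whose third vertex, $F$ of the kernel or cokernel, is $0$; a map in a triangulated category with zero cone is an isomorphism, so $F$ inverts weak equivalences. The universal property of the localization $\cat{A}/\class{W}=\cat{A}[\text{we}^{-1}]$ then yields the unique factorization $F=\bar F\circ\gamma$, and one checks $\bar F$ is automatically triangulated because $\gamma$ is essentially surjective and both $F$ and $\gamma$ are exact, so $\bar F$ commutes with suspension and sends triangles to triangles on the image of $\gamma$, which is everything. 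The step I expect to be the main obstacle is (2): precisely identifying the exact-category short exact sequence with a model-categorical (co)fiber sequence so that its image is a \emph{distinguished} triangle (not merely a candidate triangle) — this requires being careful about fibrant replacement, about the sign/rotation conventions relating the suspension $\Sigma X=\cok(X\hookrightarrow CX)$ to the connecting map, and about the fact that in an injective model structure one may compute everything inside the Frobenius category $\class{F}$ where the triangulated structure is the classical stable one; once that identification is in hand, (1) and (3) are comparatively formal.
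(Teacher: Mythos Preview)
Your proposal is correct and follows essentially the same approach as the paper for all three parts. The only notable difference is in part~(2): where you plan to invoke abstract model-categorical facts about cofiber sequences after fibrant replacement, the paper carries this out concretely by citing~\cite[Lemma~1.4.4]{becker} to produce an explicit $3\times 3$ diagram replacing $A\rightarrowtail B\twoheadrightarrow C$ by a short exact sequence $F\rightarrowtail F'\twoheadrightarrow F''$ in $\class{F}$ (with trivial cokernels), and then builds the connecting map $C\to\Sigma A$ by hand via a pushout square comparing $\Sigma A$ to $\Sigma F$; this is exactly the ``careful identification'' you flagged as the main obstacle, and your instinct that it is the crux is right.
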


\begin{proof}
For (1), recall that a Frobenius category is an exact category with enough projectives and injectives and in which the projective and injective objects coincide. Since in our case, $\cat{A}$ is weakly idempotent complete, retracts and direct summands coincide. So the class $\class{F}$ is closed under direct summands. It is therefore easy to see that $\class{F}$ naturally inherits a WIC exact structure where the admissible short exact sequences are the ones from $\cat{A}$ but with all three terms in $\class{F}$. (This is proved in~\cite[Lemma~5.1]{gillespie-exact model structures}.) For any injective cotorsion pair $(\class{W},\class{F})$, the class $\class{F}$ contains the injectives and is coresolving. So it is clear that $\class{F}$ has enough injectives, with the injectives being those in $\cat{A}$. Since $(\class{W},\class{F})$ is an injective cotorsion pair we have that the injectives lie in $\class{W}$, and so must be projective with respect to the admissible short exact sequences in $\class{F}$. Next, let $F \in \class{F}$ be arbitrary. Using enough projectives from $(\class{W},\class{F})$, find an admissible short exact sequence $F' \rightarrowtail W \twoheadrightarrow F$ with $W \in \class{W}$ and $F' \in \class{F}$. Then $W \in \class{W} \cap \class{F}$ must be injective. So $\class{F}$ also has enough projectives. If $F$ in the above short exact sequence happened to be a projective object in $\class{F}$, then the sequence would split. This allows us to conclude that the projectives in $\class{F}$ coincide with the injectives. So $\class{F}$ is a Frobenius category.

(2) follows from~\cite[Lemma~1.4.4]{becker} which says that given a short exact sequence $A \rightarrowtail B \twoheadrightarrow C$ in $\cat{A}$ there exists a commutative diagram in $\cat{A}$ of short exact sequences
$$\begin{tikzcd}
              A \arrow[tail]{r}    \arrow[tail]{d}      & B \arrow[two heads]{r} \arrow[tail]{d} & C \arrow[tail]{d} \\
F \arrow[tail]{r} \arrow[two heads]{d} & F' \arrow[two heads]{r} \arrow[two heads]{d} & F'' \arrow[two heads]{d} \\
W \arrow[tail]{r}  & W' \arrow[two heads]{r} & W''
\end{tikzcd}$$ with $F,F',F'' \in \class{F}$ and $W,W',W'' \in \class{W}$.
In more detail, denote the maps in the sequences by $A \xrightarrow{f} B \xrightarrow{g} C$ and $F \xrightarrow{k} F' \xrightarrow{h} F''$. Then since $\class{F}$ is Frobenius, there is a map $s : F'' \xrightarrow{} \Sigma F$ in $\cat{A}$ such that $F \xrightarrow{[k]} F' \xrightarrow{[h]} F'' \xrightarrow{[s]} \Sigma F$ is an exact triangle in the stable category $\class{F}/\sim \,$. Our goal is to find a map $C \xrightarrow{} \Sigma A$ in $\Ho{\class{A}}$ and to show that the triangle $A \xrightarrow{\gamma(f)} B \xrightarrow{\gamma(g)} C \xrightarrow{} \Sigma A$ is isomorphic to this exact triangle, showing that the latter triangle is also exact.

Consider again the short exact sequence $A \rightarrowtail F \twoheadrightarrow W$ which is the left vertical column of the commutative diagram. Use enough injectives to write
$F \rightarrowtail I \twoheadrightarrow \Sigma F$ with $I$ injective, making $\Sigma F$ the suspension. Take the pushout of $I \leftarrow F \rightarrow W$ to get a commutative diagram where the upper right square is bicartesian (push-pull square). The rows and columns are admissible short exact sequences by~\cite[Proposition~2.12]{buhler-exact categories}.
$$\begin{tikzcd}
A \arrow[tail]{r}    \arrow[equal]{d}      & F \arrow[two heads]{r} \arrow[tail]{d} & W \arrow[tail]{d} \\
A \arrow[tail]{r}  & I \arrow[two heads]{r} \arrow[two heads]{d} & \Sigma A \arrow[two heads]{d} \\
          & \Sigma F \arrow[equal]{r} & \Sigma F
\end{tikzcd}$$
Now, by definition, $\Sigma A$ is just the pushout. But we denote it $\Sigma A$ because by the remarks before the statement of this proposition, $\Sigma A$ \emph{is} the suspension of $A$ (since $A$ is cofibrant and $I$ is trivial). Denoting the map $\Sigma A \xrightarrow{p} \Sigma F$, we see that $p$ is an admissible epimorphism with trivial kernel, so it is a weak equivalence. This gives us a map of triangles in the homotopy category
$$\begin{CD}
A       @>\gamma(f)>>     B      @>\gamma(g)>>    C  @>>>     \Sigma A  \\
@VVV             @VVV       @VVV             @VV\gamma(p)V         \\
F      @>[k]>>  F'  @>[h]>>  F'' @>[s]>>  \Sigma F \\
@VVV            @VVV      @VVV \\
W      @.      W'      @.    W'' \\
\end{CD}$$
Since $\ker{p} = W$ and all of $W,W',W''$ are in $\class{W}$, this is an isomorphism of triangles in $\Ho{\class{A}}$. So the map $C \xrightarrow{} \Sigma A$ that we have sought is just defined so that the right square commutes in $\Ho{\class{A}}$ (using that $\gamma(p)$ is an isomorphism here).

For (3), say $F : \cat{A} \xrightarrow{} \class{T}$ is given with $\class{T}$ triangulated and $F(\class{W}) = 0$. Then given a trivial cofibration $A \hookrightarrow B \twoheadrightarrow W$ we have an exact triangle $FA \xrightarrow{} FB \xrightarrow{} FW  \xrightarrow{} \Sigma FA$. Since $FW = 0$ and $\class{T}$ is triangulated, it follows that $FA \xrightarrow{} FB$ must be an isomorphism. So $F$ sends trivial cofibrations to isomorphisms. Similarly, it sends trivial fibrations to isomorphisms. Therefore \emph{any} weak equivalence, which must factor as a trivial cofibration followed by a trivial fibration, must also be sent to an isomorphism. We thus have proved that $F$ sends all weak equivalences to isomorphisms, and so by the fundamental fact that $\Ho{\cat{A}}$ is the localization with respect to the weak equivalences, we get that $F$ factors uniquely through $\gamma_{\class{W}}$.

\end{proof}

\subsection{Recollements from cotorsion pairs}

The following functor is crucial to the recollement diagram appearing in Theorem~\ref{them-recollements in krause form}.

\begin{lemma}\label{lemma-quotient map}
Let $\cat{A}$ be a WIC exact category with enough injectives and suppose we have injective cotorsion pairs $\class{M} = (\class{W}, \class{F})$ and $\class{M}' = (\class{W}', \class{F}')$ with $\class{F}' \subseteq \class{F}$. Then the \textbf{\emph{quotient functor}} $Q : \class{F}/\sim \, \xrightarrow{} \cat{A}/\class{W}'$ defined by $Q([f]) = \gamma_{\class{W}'}(f)$ is well defined.

\end{lemma}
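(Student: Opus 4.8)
The plan is to unwind what ``well defined'' means here and then reduce it to a single observation: every injective object of $\cat{A}$ becomes a zero object in $\cat{A}/\class{W}'$. Recall that, since $\class{M} = (\class{W},\class{F})$ is an injective cotorsion pair, the associated exact model structure has $\Ho{\class{M}} \cong \class{F}/\sim$ with $\class{F}$ carrying its Frobenius structure (Proposition~\ref{prop-triangulated localization}(1)), whose projective-injective objects are exactly the injectives of $\cat{A}$; and, by part~(5) of Proposition~\ref{prop-left and right homotopic maps in exact model structures}, the relation $\sim$ appearing in the statement is: $f \sim g$ if and only if $g - f$ factors through an object of $\class{W} \cap \class{F}$, i.e.\ through an injective object of $\cat{A}$. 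Since $\class{F}/\sim$ inherits its composition and identities from $\cat{A}$, and $\gamma_{\class{W}'} \colon \cat{A} \to \cat{A}/\class{W}'$ is a functor, the assignment $[f] \mapsto \gamma_{\class{W}'}(f)$ automatically respects composition and identities once it is known to be well defined on Hom-sets. So the whole content is: if $f,g \colon X \to Y$ are maps in $\class{F}$ with $f \sim g$, then $\gamma_{\class{W}'}(f) = \gamma_{\class{W}'}(g)$.

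Because $\cat{A}/\class{W}' = \Ho{\class{M}'}$ is a triangulated, hence additive, category and $\gamma_{\class{W}'}$ is exact (Proposition~\ref{prop-triangulated localization}(2)), $\gamma_{\class{W}'}$ is additive; so $\gamma_{\class{W}'}(g) - \gamma_{\class{W}'}(f) = \gamma_{\class{W}'}(g-f)$, and the claim reduces to showing $\gamma_{\class{W}'}(h) = 0$ for any $h \colon X \to Y$ that factors as $h = q p$ with $p \colon X \to I$, $q \colon I \to Y$, and $I$ injective in $\cat{A}$. For this it suffices to show $\gamma_{\class{W}'}(I) \cong 0$ in $\cat{A}/\class{W}'$, since then $\gamma_{\class{W}'}(h) = \gamma_{\class{W}'}(q)\,\gamma_{\class{W}'}(p)$ factors through a zero object and so is the zero map.

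To see $\gamma_{\class{W}'}(I) \cong 0$: since $\class{M}' = (\class{W}',\class{F}')$ is an injective cotorsion pair, Proposition~\ref{prop-characterizations of injective cotorsion pairs} gives that $\class{W}'$ is thick and contains the injectives of $\cat{A}$, so $I \in \class{W}'$. Then $0 \rightarrowtail I$ is an admissible monomorphism whose cokernel $I$ lies in $\class{W}'$, followed by the admissible epimorphism $I \twoheadrightarrow I$ with kernel $0 \in \class{W}'$; by Lemma~\ref{lemma-characterization of weak equivalences in exact categories} the map $0 \to I$ is therefore a weak equivalence for the model structure attached to $\class{M}'$, and hence $\gamma_{\class{W}'}$ carries it to an isomorphism $0 \xrightarrow{\cong} \gamma_{\class{W}'}(I)$. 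This finishes the proof. I do not see a genuine obstacle: the only step one must not skip is pinning down $\sim$ as ``factors through an injective of $\cat{A}$'' and recording that these injectives lie in $\class{W}'$; everything after that is formal. (In particular, the hypothesis $\class{F}' \subseteq \class{F}$ is not needed for well-definedness — it enters only later, when $Q$ is shown to be part of the recollement diagram.)
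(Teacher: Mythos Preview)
Your proof is correct and follows essentially the same route as the paper's: both identify $\sim$ as ``$g-f$ factors through an injective of $\cat{A}$'' via Proposition~\ref{prop-left and right homotopic maps in exact model structures}, invoke Proposition~\ref{prop-characterizations of injective cotorsion pairs} to place injectives in $\class{W}'$, and conclude that $\gamma_{\class{W}'}(g-f)$ factors through zero. You are simply more explicit about why $\gamma_{\class{W}'}(I)\cong 0$ (via Lemma~\ref{lemma-characterization of weak equivalences in exact categories}), and your remark that $\class{F}'\subseteq\class{F}$ is unused here is accurate.
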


\begin{proof}
Say $f \sim g$ are maps in $\class{F}/\sim$. Then by Proposition~\ref{prop-left and right homotopic maps in exact model structures} we see that $g-f$ factors through an injective object $I$. But since $\class{M}' = (\class{W}', \class{F}')$ is an injective cotorsion pair we have $I \in \class{W}'$ from Proposition~\ref{prop-characterizations of injective cotorsion pairs}. Thus the functor $\gamma  = \gamma_{\class{W}'} : \cat{A} \xrightarrow{} \cat{A}/\class{W}'$ from Proposition~\ref{prop-triangulated localization} satisfies $\gamma(g-f) = \gamma(g)-\gamma(f)$ factors through 0 in $\cat{A}/\class{W}'$.

\end{proof}

Finally, the language of special precovers and preenvelope from~\cite{enochs-jenda-book} will be useful to describe derived functors. For example, when coming across an injective cotorsion pair $\class{M}_1 = (\class{W}_1, \class{F}_1)$, the notation E$(\class{M}_1)$ means to take a special $\class{F}_1$-preenvelope by using enough injectives of the cotorsion pair $\class{M}_1 = (\class{W}_1, \class{F}_1)$. This corresponds to a fibrant replacement in the corresponding model structure on $\cat{A}$, as we will recall in the proof of Theorem~\ref{them-recollements in krause form}. Similarly, for another injective pair $\class{M}_2 = (\class{W}_2, \class{F}_2)$ with $\class{F}_2 \subseteq \class{F}_1$, the notation C$(\class{M}_2)$ means to take a special $\class{W}_2$-precover. This corresponds to cofibrant replacement in $\class{M}_1/\class{M}_2 = (\class{W}_2, \class{W}, \class{F}_1)$.

\begin{theorem}[Injective Recollement Theorem]\label{them-recollements in krause form}
Let $\cat{A}$ be a WIC exact category with enough injectives and suppose we have three injective cotorsion pairs $$\class{M}_1 = (\class{W}_1, \class{F}_1) , \ \ \ \class{M}_2 = (\class{W}_2, \class{F}_2) , \ \ \ \class{M}_3 = (\class{W}_3, \class{F}_3)$$ such that $\class{F}_2 , \class{F}_3 \subseteq  \class{F}_1$. If $\class{W}_3 \cap \class{F}_1 = \class{F}_2$ (or
equivalently, $\class{W}_2 \cap \class{W}_3 = \class{W}_1$ and $\class{F}_2 \subseteq \class{W}_3$), then $\class{M}_1/\class{M}_2$ is Quillen equivalent to $\class{M}_3$ and $\class{M}_1/\class{M}_3$ is Quillen equivalent to $\class{M}_2$. In fact, we have a recollement as shown below.
\[
\begin{tikzpicture}[node distance=3.5cm]
\node (A) {$\mathcal{F}_2/\sim$};
\node (B) [right of=A] {$\mathcal{F}_1/\sim$};
\node (C) [right of=B] {$\class{A}/\class{W}_3$};
\draw[<-,bend left=40] (A.20) to node[above]{\small E$(\class{M}_2)$} (B.160);
\draw[->] (A) to node[above]{\small $I$} (B);
\draw[<-,bend right=40] (A.340) to node [below]{\small C$(\class{M}_3)$} (B.200);
\draw[<-,bend left] (B.20) to node[above]{\small $\lambda = \text{C}(\class{M}_2) \circ \text{E}(\class{M}_1)$} (C.160);
\draw[->] (B) to node[above]{\small Q} (C);
\draw[<-,bend right] (B.340) to node [below]{\small $\rho = \text{E}(\class{M}_3)$} (C.200);
\end{tikzpicture}
\]
Here, the functor $I$ is just inclusion while $Q$ is the quotient functor of Lemma~\ref{lemma-quotient map}. We point out that $\lambda$ has essential image $(\class{W}_2 \cap \class{F}_1)/\sim$ while  $\rho$ has essential image $\class{F}_3/\sim$ and they provide an equivalence $\lambda : \class{F}_3/\sim \, \longleftrightarrow  (\class{W}_2 \cap \class{F}_1)/\sim \, : \rho$.
\end{theorem}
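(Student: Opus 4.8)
The plan is to derive everything from Becker's~\cite{becker} right-localization construction applied to $\class{M}_1$, after first settling the equivalence of the two forms of the hypothesis. Note that $\class{F}_2,\class{F}_3\subseteq\class{F}_1$ forces $\class{W}_1\subseteq\class{W}_2\cap\class{W}_3$ (as $\class{W}_i=\leftperp{\class{F}_i}$), that each $\class{M}_i$ is an injective exact model structure with Hovey triple $(\cat{A},\class{W}_i,\class{F}_i)$ and $\Ho{\class{M}_i}\cong\class{F}_i/\!\sim\,\cong\cat{A}/\class{W}_i$, and that the injectives of $\cat{A}$ are exactly the objects of $\class{W}_i\cap\class{F}_i$. \emph{Equivalence of hypotheses:} if $\class{W}_3\cap\class{F}_1=\class{F}_2$ then $\class{F}_2\subseteq\class{W}_3$ at once, and for $X\in\class{W}_2\cap\class{W}_3$ a special $\class{F}_1$-preenvelope $X\rightarrowtail F\twoheadrightarrow W$ from $\class{M}_1$ has $F\in\class{W}_2\cap\class{W}_3\cap\class{F}_1=\class{W}_2\cap\class{F}_2$ by thickness of $\class{W}_2$ and $\class{W}_3$, so $F$ is injective, hence $F,W\in\class{W}_1$, and thickness of $\class{W}_1$ forces $X\in\class{W}_1$; thus $\class{W}_2\cap\class{W}_3=\class{W}_1$. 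Conversely, if $\class{W}_2\cap\class{W}_3=\class{W}_1$ and $\class{F}_2\subseteq\class{W}_3$, then $\class{F}_2\subseteq\class{W}_3\cap\class{F}_1$ is clear; and for $X\in\class{W}_3\cap\class{F}_1$ a special $\class{F}_2$-preenvelope $X\rightarrowtail F\twoheadrightarrow W$ from $\class{M}_2$ has $F\in\class{F}_2\subseteq\class{W}_3$, so $W\in\class{W}_2\cap\class{W}_3=\class{W}_1$ by thickness of $\class{W}_3$; as $W\in\leftperp{\class{F}_1}$ and $X\in\class{F}_1$, the sequence splits and $X$ is a direct summand of $F\in\class{F}_2$, so $X\in\class{F}_2$.

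Since $\class{F}_2,\class{F}_3\subseteq\class{F}_1$, Becker's right localizations $\class{M}_1/\class{M}_2=(\class{W}_2,\class{W},\class{F}_1)$ and $\class{M}_1/\class{M}_3=(\class{W}_3,\class{W}'',\class{F}_1)$ exist as exact model structures and are right Bousfield localizations of $\class{M}_1$, with $\class{W}\cap\class{F}_1=\class{F}_2$, $\class{W}_2\cap\class{W}=\class{W}_1$, $\class{W}''\cap\class{F}_1=\class{F}_3$, $\class{W}_3\cap\class{W}''=\class{W}_1$ and $\class{W},\class{W}''$ thick. To obtain $\class{M}_1/\class{M}_2\simeq\class{M}_3$ I first show $\class{W}=\class{W}_3$: a fibrant replacement of $X$ in $\class{M}_1/\class{M}_2$ is precisely a special $\class{F}_1$-preenvelope $X\rightarrowtail RX\twoheadrightarrow W_1$ from $\class{M}_1$, so $X\in\class{W}$ iff $RX\in\class{W}\cap\class{F}_1=\class{F}_2$; thickness of $\class{W}_3$ applied to that same short exact sequence (using $\class{W}_1\subseteq\class{W}_3$, $\class{F}_2\subseteq\class{W}_3$, and $\class{W}_3\cap\class{F}_1=\class{F}_2$) shows likewise that $X\in\class{W}_3$ iff $RX\in\class{F}_2$. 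Hence $\class{M}_1/\class{M}_2$ and $\class{M}_3$ share the class of weak equivalences by Lemma~\ref{lemma-characterization of weak equivalences in exact categories}, and since the identity functor $\class{M}_1/\class{M}_2\to\class{M}_3$ is left Quillen (compare cofibrations and trivial cofibrations) and preserves weak equivalences, its total left derived functor is the identity of $\cat{A}/\class{W}_3$, i.e.\ it is a Quillen equivalence. For $\class{M}_1/\class{M}_3\simeq\class{M}_2$ the classes of weak equivalences genuinely differ, so instead I note that both model structures have cofibrant--fibrant class $\class{W}_3\cap\class{F}_1=\class{F}_2=\cat{A}\cap\class{F}_2$, and that by Proposition~\ref{prop-left and right homotopic maps in exact model structures}(5) the homotopy relation of each, restricted to $\class{F}_2$, is ``factors through an injective'' (for $\class{M}_1/\class{M}_3$ since $\class{W}_3\cap\class{W}''\cap\class{F}_1=\class{W}_1\cap\class{F}_1$). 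Thus $\Ho{\class{M}_1/\class{M}_3}$ and $\Ho{\class{M}_2}$ are literally the same category $\class{F}_2/\!\sim$, and the total left derived functor of the left Quillen identity $\class{M}_1/\class{M}_3\to\class{M}_2$ is the identity of $\class{F}_2/\!\sim$, hence an equivalence.

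Now put $i_*:=I$ and $j^*:=Q$ (Lemma~\ref{lemma-quotient map}). Each Quillen adjunction $\mathrm{id}\colon\class{M}_1\rightleftarrows\class{M}_i$ ($\mathrm{id}\colon\class{M}_1\to\class{M}_i$ is left Quillen because $\class{W}_1\subseteq\class{W}_i$) derives to an adjunction whose right adjoint is the inclusion $\class{F}_i/\!\sim\,\hookrightarrow\class{F}_1/\!\sim$: for $i=2$ this is $i^{*}=\textup{E}(\class{M}_2)\dashv i_*=I$, and for $i=3$ it is $Q\dashv\rho=\textup{E}(\class{M}_3)$. Dually, since $\class{M}_1/\class{M}_2$ and $\class{M}_1/\class{M}_3$ are right localizations of $\class{M}_1$, the Quillen adjunctions $\mathrm{id}\colon\class{M}_1/\class{M}_2\rightleftarrows\class{M}_1$ and $\mathrm{id}\colon\class{M}_1/\class{M}_3\rightleftarrows\class{M}_1$ derive to adjunctions whose \emph{left} adjoint is the inclusion into $\class{F}_1/\!\sim$ of the cofibrant--fibrant class of $\class{M}_1/\class{M}_2$, resp.\ of $\class{M}_1/\class{M}_3$; after the Quillen equivalences above these read as $\lambda=\textup{C}(\class{M}_2)\circ\textup{E}(\class{M}_1)\dashv Q$ (essential image $(\class{W}_2\cap\class{F}_1)/\!\sim$) and $I$ once more, now with right adjoint $i^{!}=\textup{C}(\class{M}_3)$ (cofibrant replacement in $\class{M}_1/\class{M}_3$, i.e.\ a special $\class{W}_3$-precover, as $\class{M}_1/\class{M}_3$ has fibrant class $\class{F}_1$). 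We thereby obtain the adjoint triples $(i^*,i_*,i^!)$ and $(j_!,j^*,j_*)=(\lambda,Q,\rho)$ of exact functors (each a derived functor of a Quillen functor between stable model structures, with Proposition~\ref{prop-triangulated localization}(2) for $\gamma$), in which $i_*,j_!,j_*$ are fully faithful --- up to the fundamental theorem of model categories and the equivalences just used, each is the inclusion of a full subcategory of $\class{F}_1/\!\sim$, namely $\class{F}_2/\!\sim$, $(\class{W}_2\cap\class{F}_1)/\!\sim$ and $\class{F}_3/\!\sim$, all carrying the homotopy relation ``factors through an injective''. Finally $Q\circ I=0$ because $\class{F}_2\subseteq\class{W}_3$ and $\gamma_{\class{W}_3}$ annihilates $\class{W}_3$, and $\mathrm{Im}(I)=\mathrm{Ker}(Q)$ because an object $F\in\class{F}_1$ satisfies $Q(F)=\gamma_{\class{W}_3}(F)=0$ precisely when $F\in\class{W}_3\cap\class{F}_1=\class{F}_2$; these are the recollement axioms, and the two defining triangles follow formally from the (co)units of the adjunctions. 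The essential images of $\lambda$ and $\rho$ being $(\class{W}_2\cap\class{F}_1)/\!\sim$ and $\class{F}_3/\!\sim$ as recorded, the final assertion is the general recollement fact that $Q$ restricts to an equivalence on each of $\mathrm{Im}(\lambda)$ and $\mathrm{Im}(\rho)$ with respective quasi-inverses $\lambda$ and $\rho$; composing these, $\lambda$ and $\rho$ cut down to mutually inverse equivalences $\class{F}_3/\!\sim\;\longleftrightarrow\;(\class{W}_2\cap\class{F}_1)/\!\sim$.

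The main obstacle is the bookkeeping of the last two steps: identifying each of the six functors as the derived functor of an identity functor attached to one of the Quillen adjunctions or Quillen equivalences in play, and checking that these adjunctions glue into the two adjoint triples as claimed. In particular the Quillen equivalence $\class{M}_1/\class{M}_3\simeq\class{M}_2$ is not, unlike $\class{M}_1/\class{M}_2\simeq\class{M}_3$, a matter of the two model structures having a common class of weak equivalences; one has to argue through their common cofibrant--fibrant core. Everything else --- the cotorsion-pair manipulations, the exactness of the functors, and the recollement axioms --- is routine given Becker's theorem and the material recalled in Section~\ref{sec-preliminaries}.
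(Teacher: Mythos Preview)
Your argument is correct and shares its engine with the paper's proof: both rest on Becker's right-localization construction $\class{M}_1/\class{M}_2$ and $\class{M}_1/\class{M}_3$, on the key identification $\class{M}_1/\class{M}_2=(\class{W}_2,\class{W}_3,\class{F}_1)$ (your $\class{W}=\class{W}_3$ step), and on the Quillen equivalences $\class{M}_1/\class{M}_2\simeq\class{M}_3$ and $\class{M}_1/\class{M}_3\simeq\class{M}_2$. The presentations diverge after that point. The paper invokes Becker's result~\cite[Corollary~1.4.5]{becker} as a black box to obtain a (co)localization diagram and then spends its effort constructing explicit natural isomorphisms $\textnormal{Inc}\circ\textnormal{C}(\class{M}_2)\cong Q$ and $Q\cong\textnormal{Inc}\circ\textnormal{E}(\class{M}_3)$ via concrete precover/preenvelope diagrams, thereby showing $Q$ sits as the middle functor of both sequences simultaneously. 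You instead bypass the explicit natural isomorphisms by identifying each of the six functors directly as a total derived identity functor of one of the four Quillen adjunctions in play, and then verify the recollement axioms ($i_*,j_!,j_*$ fully faithful, $j^*i_*=0$, $\mathrm{Im}(i_*)=\mathrm{Ker}(j^*)$) by hand from the cotorsion-pair hypothesis. Your route is more self-contained and makes the bookkeeping of adjunctions explicit; the paper's route is shorter once Becker's corollary is granted, and its concrete natural isomorphisms $\{p_A\},\{j_A\}$ give a hands-on description of why $Q$ coincides with the relevant composites that your derived-functor identification leaves implicit. Your handling of $\class{M}_1/\class{M}_3\simeq\class{M}_2$ through the common cofibrant--fibrant core $\class{F}_2$ (rather than a common class of weak equivalences) is the right move and matches what the paper's diagram encodes; the paper does not spell this asymmetry out.
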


\begin{proof}
We start by doing the same thing as in the proof of~\cite[Theorem~4.6]{gillespie-recollements}. That is, we apply~\cite[Corollary~1.4.5]{becker} to the three injective cotorsion pairs and are led to the diagram.
\[
\begin{tikzpicture}[node distance=3.5 cm, auto]
\node (A)  {$\Ho{\class{M}_2}$};
\node (D) [below of=A] {$\Ho{\class{M}_1/\class{M}_3}$};
\node (B) [right of=A] {$\Ho{\class{M}_1}$};
\node (C) [right of=B] {$\Ho{\class{M}_1/\class{M}_2}$};
\node (E) [right of=D] {$\Ho{\class{M}_1}$};
\node (F) [right of=E] {$\Ho{\class{M}_3}$};

%
%
\draw[<-] (A.10) to node {L\,id} (B.170);
\draw[->] (A.350) to node [swap] {R\,id} (B.190);
\draw[->] (D.6) to node {L\,id} (E.170);
\draw[<-] (D.353) to node [swap] {R\,id} (E.190);
\draw[->] (E.10) to node {L\,id} (F.170);
\draw[<-] (E.350) to node [swap] {R\,id} (F.190);
\draw[<-] (B.10) to node {L\,id} (C.173);
\draw[->] (B.350) to node [swap] {R\,id} (C.187);
%
%
\draw[-] (B.280) to node {} (E.80);
\draw[-] (B.260) to node [swap] {} (E.100);
\draw[->] (A.290) to node {\textnormal{R\,id}} (D.70);
\draw[<-] (A.250) to node [swap] {\textnormal{L\,id}} (D.110);
\draw[<-] (C.290) to node {R\,id} (F.70);
\draw[->] (C.250) to node [swap] {L\,id} (F.110);
\end{tikzpicture}
\]
These are all derived adjunctions coming from the of the identity Quillen adjunctions. The cited corollary of Becker tells us that since $\class{F}_2 \subseteq \class{F}_1$, the top row is a colocalization sequence, and since $\class{F}_3 \subseteq \class{F}_1$, the bottom row a localization sequence. As explained in~\cite[Theorem~4.6]{gillespie-recollements}, the hypothesis $\class{W}_3 \cap \class{F}_1 = \class{F}_2$ is equivalent to the hypothesis $\class{W}_2 \cap \class{W}_3 = \class{W}_1$ and $\class{F}_2 \subseteq \class{W}_3$, and easily leads to the fact that the vertical functors are all equivalences. In particular, $\Ho{\class{M}_3} \cong \Ho{\class{M}_1/\class{M}_2}$ and $\Ho{\class{M}_2} \cong \Ho{\class{M}_1/\class{M}_3}$. When we analyze what these functors are doing on the level of the cofibrant-fibrant subcategories we are led to the following diagram.
\[
\begin{tikzpicture}[node distance=3.5 cm, auto]
\node (A)  {$\class{F}_2/\sim$};
\node (D) [below of=A] {$\class{F}_2/\sim$};
\node (B) [right of=A] {$\class{F}_1/\sim$};
\node (C) [right of=B] {$(\class{W}_2 \cap \class{F}_1)/\sim$};
\node (E) [right of=D] {$\class{F}_1/\sim$};
\node (F) [right of=E] {$\class{F}_3/\sim$};

%
%
\draw[<-] (A.17) to node {E$(\class{M}_2)$} (B.163);
\draw[->] (A.343) to node [swap] {Inclusion} (B.197);
\draw[->] (D.17) to node {Inclusion} (E.163);
\draw[<-] (D.343) to node [swap] {C$(\class{M}_3)$} (E.197);
\draw[->] (E.17) to node {E$(\class{M}_3)$} (F.163);
\draw[<-] (E.343) to node [swap] {Inclusion} (F.197);
\draw[<-] (B.17) to node {Inclusion} (C.171);
\draw[->] (B.343) to node [swap] {C$(\class{M}_2)$} (C.189);
%
%
\draw[-] (B.280) to node {} (E.80);
\draw[-] (B.260) to node [swap] {} (E.100);
\draw[->] (A.290) to node {\textnormal{id}} (D.70);
\draw[<-] (A.250) to node [swap] {\textnormal{id}} (D.110);
\draw[<-] (C.290) to node {C$(\class{M}_2)$} (F.70);
\draw[->] (C.250) to node [swap] {E$(\class{M}_3)$} (F.110);
\end{tikzpicture}
\]
Let $Q : \class{F}_1/\sim \, \xrightarrow{} \cat{A}/\class{W}_3$ be the quotient map of Lemma~\ref{lemma-quotient map}. The key to proving the stated version of the theorem is to realized that $Q$ factors through the vertical arrows to the far right, as we will now show.

We first note that the hypothesis implies $\class{M}_1/\class{M}_2 = (\class{W}_2, \class{W}_3, \class{F}_1)$ and so $\cat{A}/\class{W}_3 = \Ho{\class{M}_1/\class{M}_2} \cong (\class{W}_2 \cap \class{F}_1)/\sim \,$. Now consider the diagram below where the vertical maps simply reflect the canonical equivalence $\Ho{\class{M}_1/\class{M}_2} \cong (\class{W}_2 \cap \class{F}_1)/\sim$ since $\textnormal{C}(\class{M}_2) \circ \textnormal{E}(\class{M}_1)$ represents fibrant replacement followed by cofibrant replacement in the model structure $\class{M}_1/\class{M}_2$.
\[
\begin{tikzpicture}[node distance=3.5 cm, auto]
\node (B) [right of=A] {$\class{F}_1/\sim$};
\node (C) [right of=B] {$(\class{W}_2 \cap \class{F}_1)/\sim$};
\node (E) [right of=D] {$\class{F}_1/\sim$};
\node (F) [right of=E] {$\class{A}/\class{W}_3$};

%
%

%
\draw[->] (E.0) to node {$Q$} (F.180);
\draw[<-] (B.17) to node {Inc} (C.171);
\draw[->] (B.343) to node [swap] {C$(\class{M}_2)$} (C.189);
%
%
\draw[-] (B.280) to node {} (E.80);
\draw[-] (B.260) to node [swap] {} (E.100);
\draw[<-] (C.300) to node {$\textnormal{C}(\class{M}_2) \circ \textnormal{E}(\class{M}_1)$} (F.60);
\draw[->] (C.240) to node [swap] {Inc} (F.120);
\end{tikzpicture}
\]

We digress from the proof to review how the cofibrant replacement functors such as $\class{F}_1/\sim \xrightarrow{\textnormal{C}(\class{M}_2)} (\class{W}_2 \cap \class{F}_1)/\sim$ actually work. So say $[f] : A \xrightarrow{} B$ is any morphism in $\class{F}_1/\sim\,$. Using enough projectives of the cotorsion pair $\class{M}_2 = (\class{W}_2,\class{F}_2)$ we can take short exact sequences $F_2  \rightarrowtail W_2 \twoheadrightarrow  A$ and $F'_2  \rightarrowtail W'_2 \twoheadrightarrow  B$ with $W_2, W'_2 \in \class{W}_2$ and $F_2, F'_2 \in \class{F}_2$. The fact that $(\class{W}_2,\class{F}_2)$ is a cotorsion pair implies that there is always a lift $\tilde{f} : W_2 \xrightarrow{} W'_2$ as indicated in the diagram below.
$$\begin{CD}
F_2       @>>>     F'_2       \\
@VVV             @VVV               \\
W_2       @>\tilde{f}>>  W'_2  \\
@V p_A VV            @VV p_B V       \\
A       @>f>>       B    \\
\end{CD}$$ Given such an $f$, the map $\tilde{f}$ with this property is unique in the
stable category $(\class{W}_2 \cap \class{F}_1)/\sim$. To see this, suppose $g$ is another map making the square commute. We wish to show $\tilde{f} \sim g$. But since both $\tilde{f}$ and $g$ each make the square commute we get $p_{B}(\tilde{f}-g) = 0$. So $\tilde{f}-g$ factors through the kernel $F'_2$. But since $(\class{W}_2,\class{F}_2)$ is an injective cotorsion pair we can find a s.e.s $F''_2 \rightarrowtail I \twoheadrightarrow F'_2$ with $I$ injective and $F''_2 \in \class{F}_2$. It is easy to see now that $\tilde{f}-g$ doesn't just factor through $F'_2$ but factors through the injective $I$. This proves $\tilde{f} \sim g$. Similarly, recall that this association depends only on the homotopy class of $f$, confirming that $[f] \mapsto [\tilde{f}]$ is well-defined. To see this, say $f_1 - f_2$ factors as $st$ through an injective $I_1$. Since $I_1$ is trivial, we have $\Ext^1(I,F'_2) = 0$ and so $s$ lifts over $p_{B}$, meaning we have a $I_1 \xrightarrow{w} W'_2$ with $p_{B}w = s$. This leads us to a map $\phi : A \xrightarrow{} W'_2$ with $p_{B}\phi = f_1 - f_2$. But then we turn around and note that $(\tilde{f}_1-\tilde{f}_2) - \phi p$ must factor through the kernel $F'_2$ of $p_{B}$. It follows then for the same reasons as above that $\tilde{f}_1-\tilde{f}_2$ doesn't just factor through $F'_2$ but that it must factor as $ab$ through another injective $I_2$. But since $\tilde{f}_1-\tilde{f}_2 = \phi p + ab$ where $\phi p$ factors through $I_1$ and $ab$ factors through $I_2$ we get that $\tilde{f}_1-\tilde{f}_2$ factors through the injective $I_1 \oplus I_2$. Putting all these properties together allows for a well-defined functor $\textnormal{C}(\class{M}_2)([f]) = [\tilde{f}]$.

Now going back to the proof, we see that in the above paragraph, the objects $F_2,F'_2 \in \class{F}_2 \subseteq \class{W}_3$ are trivial in  $\class{M}_1/\class{M}_2 = (\class{W}_2, \class{W}_3, \class{F}_1)$. We immediately conclude that we have a natural isomorphism $$\{\,p_A\,\} : \textnormal{Inc} \circ \textnormal{C}(\class{M}_2) \cong Q.$$
One can now check that we have proved we have the colocalization sequence:
\[
\begin{tikzpicture}[node distance=3.5 cm, auto]
\node (A)  {$\class{F}_2/\sim$};
\node (B) [right of=A] {$\class{F}_1/\sim$};
\node (C) [right of=B] {$\cat{A}/\class{W}_3$};
\

%
%
\draw[<-] (A.17) to node {E$(\class{M}_2)$} (B.163);
\draw[->] (A.343) to node [swap] {Inclusion} (B.197);

\draw[<-] (B.17) to node {$\textnormal{C}(\class{M}_2) \circ \textnormal{E}(\class{M}_1)$} (C.163);
\draw[->] (B.343) to node [swap] {$Q$} (C.197);

\end{tikzpicture}
\]
Similarly, we consider the diagram below where the vertical maps simply reflect the canonical equivalence $\Ho{\class{M}_3} \cong \class{F}_3/\sim$ since $\textnormal{E}(\class{M}_3)$ represents fibrant replacement in the model structure $\class{M}_3$.
\[
\begin{tikzpicture}[node distance=3.5 cm, auto]
\node (B) [right of=A] {$\class{F}_1/\sim$};
\node (C) [right of=B] {$\class{A}/\class{W}_3$};
\node (E) [right of=D] {$\class{F}_1/\sim$};
\node (F) [right of=E] {$\class{F}_3/\sim$};

%
%

%
\draw[->] (B.0) to node {$Q$} (C.180);
\draw[->] (E.17) to node {$\textnormal{E}(\class{M}_3)$} (F.163);
\draw[<-] (E.343) to node [swap] {Inc} (F.197);
%
%
\draw[-] (B.280) to node {} (E.80);
\draw[-] (B.260) to node [swap] {} (E.100);
\draw[->] (C.300) to node {$\textnormal{E}(\class{M}_3)$} (F.60);
\draw[<-] (C.240) to node [swap] {Inc} (F.120);
\end{tikzpicture}
\]
As above we see that there is a natural isomorphism $\{\,j_A\,\} : Q \cong \textnormal{Inc} \circ \textnormal{E}(\class{M}_3)$
due to the commutative diagram below.
$$\begin{CD}
A       @>j_A>>  F_3 @>>>     W_3 \\
@V f VV            @VV \hat{f} V       \\
B       @>j_B>>       F'_3  @>>>     W'_3  \\
\end{CD}$$
This shows that we have a localization sequence as below and proves the theorem.
\[
\begin{tikzpicture}[node distance=3.5 cm, auto]
\node (A)  {$\class{F}_2/\sim$};
\node (B) [right of=A] {$\class{F}_1/\sim$};
\node (C) [right of=B] {$\cat{A}/\class{W}_3$};
\

%
%
\draw[<-] (A.17) to node {Inclusion} (B.163);
\draw[->] (A.343) to node [swap] {C$(\class{M}_3)$} (B.197);

\draw[<-] (B.17) to node {$Q$} (C.163);
\draw[->] (B.343) to node [swap] {$\textnormal{E}(\class{M}_3)$} (C.197);

\end{tikzpicture}
\]

\end{proof}

There is a projective dual to Theorem~\ref{them-recollements in krause form} as well. We state it now for easy reference later. Here we recall that given two projective cotorsion pairs $\class{M}_1 = (\class{C}_1,\class{W}_1)$ and $\class{M}_2 = (\class{C}_2,\class{W}_2)$ with $\class{C}_2 \subseteq \class{C}_1$, Becker defined in~\cite{becker} their \emph{left localization}, denoted $\class{M}_2\backslash\class{M}_1$ (note the notation, suggesting ``left''), to be a particular Hovey triple $\class{M}_2\backslash\class{M}_1 = (\class{C}_1,\class{W},\class{W}_2)$. He showed $\class{M}_2\backslash\class{M}_1$ is the left Bousfield localization of $\class{M}_1$ by $\class{M}_2$.

\begin{theorem}[Projective Recollement Theorem]\label{them-recollements theorem projective version}
Let $\cat{A}$ be a WIC exact category with enough projectives and suppose we have three projective cotorsion pairs $$\class{M}_1 = (\class{C}_1, \class{W}_1) , \ \ \ \class{M}_2 = (\class{C}_2, \class{W}_2) , \ \ \ \class{M}_3 = (\class{C}_3, \class{W}_3)$$ such that $\class{C}_2 , \class{C}_3 \subseteq  \class{C}_1$. If $\class{W}_3 \cap \class{C}_1 = \class{C}_2$ (or equivalently, $\class{W}_2 \cap \class{W}_3 = \class{W}_1$ and $\class{C}_2 \subseteq \class{W}_3$), then $\class{M}_2\backslash\class{M}_1$ is Quillen equivalent to $\class{M}_3$ and $\class{M}_3\backslash\class{M}_1$ is Quillen equivalent to $\class{M}_2$. In fact, we have a recollement as shown below.
\[
\begin{tikzpicture}[node distance=3.5cm]
\node (A) {$\mathcal{C}_2/\sim$};
\node (B) [right of=A] {$\mathcal{C}_1/\sim$};
\node (C) [right of=B] {$\class{A}/\class{W}_3$};
\draw[<-,bend left=40] (A.20) to node[above]{\small E$(\class{M}_3)$} (B.160);
\draw[->] (A) to node[above]{\small $I$} (B);
\draw[<-,bend right=40] (A.340) to node [below]{\small C$(\class{M}_2)$} (B.200);
\draw[<-,bend left] (B.20) to node[above]{\small $\lambda = \text{C}(\class{M}_3)$} (C.160);
\draw[->] (B) to node[above]{\small Q} (C);
\draw[<-,bend right] (B.340) to node [below]{\small $\rho = \text{E}(\class{M}_2) \circ \text{C}(\class{M}_1)$} (C.200);
\end{tikzpicture}
\]
Here, the functor $I$ is just inclusion while $Q$ is the quotient functor of (the dual of) Lemma~\ref{lemma-quotient map}. We point out that $\lambda$ has essential image $\class{C}_3/\sim$ while $\rho$ has essential image $(\class{W}_2 \cap \class{C}_1)/\sim$ and they provide an equivalence $\lambda : (\class{W}_2 \cap \class{C}_1)/\sim \, \longleftrightarrow  \class{C}_3/\sim \, : \rho$.

\end{theorem}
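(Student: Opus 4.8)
The plan is to deduce the Projective Recollement Theorem from the Injective Recollement Theorem (Theorem~\ref{them-recollements in krause form}) by a formal dualization argument, rather than rerunning the entire proof. Recall that a projective cotorsion pair in $\cat{A}$ is exactly the same data as an injective cotorsion pair in the opposite category $\cat{A}^{op}$: if $\class{M}_i = (\class{C}_i, \class{W}_i)$ is a complete cotorsion pair in $\cat{A}$ with $\class{W}_i$ thick and $\class{C}_i \cap \class{W}_i$ the class of projectives, then viewing it in $\cat{A}^{op}$ gives $(\class{W}_i, \class{C}_i)$, which is a complete cotorsion pair with first coordinate thick and $\class{W}_i \cap \class{C}_i$ the class of injectives of $\cat{A}^{op}$. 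Also $\cat{A}$ is WIC exact with enough projectives if and only if $\cat{A}^{op}$ is WIC exact with enough injectives, since weak idempotent completeness is self-dual and split monos/epis swap. So the three projective cotorsion pairs in $\cat{A}$ become three injective cotorsion pairs $\class{M}_i^{op} = (\class{W}_i, \class{C}_i)$ in $\cat{A}^{op}$.

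Next I would check that the hypotheses transport correctly. The containments $\class{C}_2, \class{C}_3 \subseteq \class{C}_1$ are precisely the hypotheses $\class{F}_2, \class{F}_3 \subseteq \class{F}_1$ of Theorem~\ref{them-recollements in krause form} applied in $\cat{A}^{op}$ (with $\class{F}_i := \class{C}_i$, $\class{W}_i := \class{W}_i$), and the condition $\class{W}_3 \cap \class{C}_1 = \class{C}_2$ is exactly the condition $\class{W}_3 \cap \class{F}_1 = \class{F}_2$ in $\cat{A}^{op}$. Likewise the equivalent reformulations ($\class{W}_2 \cap \class{W}_3 = \class{W}_1$ and $\class{C}_2 \subseteq \class{W}_3$) match verbatim. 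Therefore Theorem~\ref{them-recollements in krause form} applies in $\cat{A}^{op}$ and yields a recollement
\[
\begin{tikzpicture}[node distance=3.5cm]
\node (A) {$\mathcal{C}_2/\sim$};
\node (B) [right of=A] {$\mathcal{C}_1/\sim$};
\node (C) [right of=B] {$\cat{A}^{op}/\class{W}_3$};
\draw[->] (A) to node[above]{\small $I$} (B);
\draw[->] (B) to node[above]{\small $Q$} (C);
\end{tikzpicture}
\]
of triangulated categories, with the three pairs of adjoint functors named exactly as in Theorem~\ref{them-recollements in krause form} but interpreted in $\cat{A}^{op}$. In particular, the top and bottom adjoints of the left-hand block are $\textnormal{E}(\class{M}_2^{op})$ and $\textnormal{C}(\class{M}_3^{op})$ and those of the right-hand block are $\textnormal{C}(\class{M}_2^{op}) \circ \textnormal{E}(\class{M}_1^{op})$ and $\textnormal{E}(\class{M}_3^{op})$; the essential images are $(\class{W}_2 \cap \class{C}_1)/\sim$ and $\class{C}_3/\sim$.

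Finally I would translate the recollement back to $\cat{A}$ by applying the contravariant functor $(-)^{op}$, which reverses the direction of every arrow and hence turns a recollement into a recollement (a recollement is a self-dual notion). Under this passage, a special $\class{F}$-preenvelope in $\cat{A}^{op}$ becomes a special $\class{C}$-precover in $\cat{A}$ and vice versa, so $\textnormal{E}(\class{M}_i^{op})$ corresponds to $\textnormal{C}(\class{M}_i)$ and $\textnormal{C}(\class{M}_i^{op})$ corresponds to $\textnormal{E}(\class{M}_i)$; similarly the quotient functor $Q$ of Lemma~\ref{lemma-quotient map} dualizes to the quotient functor mentioned in the statement, and $\cat{A}^{op}/\class{W}_3$ is (as a category) $\cat{A}/\class{W}_3$. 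Keeping careful track of which functor is ``upper'' and which is ``lower'' in each block, the left block of Theorem~\ref{them-recollements in krause form} in $\cat{A}^{op}$ becomes the left block here with upper adjoint $\textnormal{E}(\class{M}_3)$ and lower adjoint $\textnormal{C}(\class{M}_2)$, and the right block becomes the right block here with $\lambda = \textnormal{C}(\class{M}_3)$ and $\rho = \textnormal{E}(\class{M}_2) \circ \textnormal{C}(\class{M}_1)$, with essential images $\class{C}_3/\sim$ and $(\class{W}_2 \cap \class{C}_1)/\sim$ respectively, and the Quillen equivalences $\class{M}_2 \backslash \class{M}_1 \simeq \class{M}_3$, $\class{M}_3 \backslash \class{M}_1 \simeq \class{M}_2$ follow from the corresponding Quillen equivalences in $\cat{A}^{op}$ since Becker's left localization $\class{M}_2 \backslash \class{M}_1$ is by definition the opposite of his right localization $\class{M}_1^{op}/\class{M}_2^{op}$.

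The main obstacle I expect is purely bookkeeping: making sure that passing to the opposite category correctly swaps ``top'' and ``bottom'' adjoints in each block of the recollement diagram, so that the picture in the statement comes out with the functors in the claimed positions (in particular that it is $\textnormal{E}(\class{M}_3)$ and not $\textnormal{C}(\class{M}_3)$ on top of the left block). One should spell out once and for all the dictionary $\cat{A} \leftrightarrow \cat{A}^{op}$: enough projectives $\leftrightarrow$ enough injectives, projective cotorsion pair $(\class{C},\class{W}) \leftrightarrow$ injective cotorsion pair $(\class{W},\class{C})$, special precover $\leftrightarrow$ special preenvelope, Becker left localization $\leftrightarrow$ Becker right localization, and then the result is immediate. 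Alternatively — and this is worth a sentence in the paper — one can simply remark that the entire proof of Theorem~\ref{them-recollements in krause form} dualizes line by line (replacing ``injective'' by ``projective'', ``enough injectives'' by ``enough projectives'', cosyzygies by syzygies, and Becker's right localization by his left localization), which is really the same argument phrased without invoking $\cat{A}^{op}$ explicitly.
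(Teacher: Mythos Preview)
Your proposal is correct and matches the paper's approach: the paper gives no proof at all for this theorem, presenting it simply as the projective dual of Theorem~\ref{them-recollements in krause form} (``There is a projective dual to Theorem~\ref{them-recollements in krause form} as well. We state it now for easy reference later.''). Your writeup makes explicit the dualization via $\cat{A}^{op}$ and carefully tracks the swap of upper/lower adjoints and of preenvelopes/precovers, which is exactly the implicit argument the paper is relying on; your alternative closing remark (that the proof of Theorem~\ref{them-recollements in krause form} dualizes line by line) is in fact all the paper says.
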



\section{Localizing cotorsion pairs and triples in Frobenius categories}\label{sec-localizing cot pairs in frobenius categories}

Recall that an exact category $\cat{A}$ is called a \emph{Frobenius category} if there are enough projective and injective objects and if these two classes of objects coincide. We will call these the \emph{projective-injective} objects. In this section we look at special cotorsion pairs in $\cat{A}$ which we call \emph{localizing} cotorsion pairs. Being interested in exact model structures and Hovey's correspondence with cotorsion pairs, we will only consider Frobenius categories which are weakly idempotent complete. For brevity, we will call such a category a \emph{WIC Frobenius category}. Indeed the next proposition points out that a Frobenius category $\cat{A}$ has an exact model structure if and only if $\cat{A}$ is a WIC Frobenius category.

\begin{proposition}\label{prop-frobenius cats are exact model cats iff WIC}
Let $\cat{A}$ be a Frobenius category. Then $\cat{A}$ has an exact model structure with the admissible monomorphisms (resp. admissible epimorphisms) as the cofibrations (resp. fibrations) and the projective-injective objects as the trivial objects if and only if $\cat{A}$ is weakly idempotent complete. Of course in this case $\Ho{\class{A}} = \class{A}/\sim$ is the stable category.
\end{proposition}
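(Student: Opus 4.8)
The plan is to prove the two implications separately. The substance of ``$\cat{A}$ weakly idempotent complete $\Rightarrow$ the model structure exists'' is a short cotorsion-pair verification fed into Theorem~\ref{them-Hovey's theorem for WIC-exact categories}; the substance of the converse is a single retract diagram turning the retract axiom of a model structure into weak idempotent completeness. The final assertion about $\Ho{\cat{A}}$ then comes for free from the Fundamental Theorem and Proposition~\ref{prop-left and right homotopic maps in exact model structures}(5).

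\textbf{The forward direction.} Let $\class{W}$ denote the class of projective-injective objects of $\cat{A}$. First I would check that $\class{M} = (\cat{A},\class{W},\cat{A})$ is a Hovey triple, i.e.\ that $(\cat{A},\class{W})$ and $(\class{W},\cat{A})$ are complete cotorsion pairs and that $\class{W}$ is thick. For $(\cat{A},\class{W})$: one has $\rightperp{\cat{A}} = \class{W}$ because an object $Y$ satisfies $\Ext^1_{\cat{A}}(-,Y) = 0$ exactly when it is injective (the nontrivial inclusion embeds $Y$ into an injective, splits the resulting sequence, and uses that injectives are closed under direct summands --- this is where WIC enters, so that retracts are summands), and $\leftperp{\class{W}} = \cat{A}$ because $\Ext^1_{\cat{A}}(-,W)$ vanishes for injective $W$. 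Completeness follows from ``enough injectives'' (giving the special $\class{W}$-preenvelope $M \rightarrowtail W \twoheadrightarrow A'$) and from the trivial sequence $0 \rightarrowtail M \to M \twoheadrightarrow 0$ (giving the special $\cat{A}$-precover). Dually $(\class{W},\cat{A})$ is a complete cotorsion pair, using ``enough projectives'' and $\class{W} = $ projectives $= $ injectives. Thickness of $\class{W}$: closure under summands is WIC again; and in any admissible short exact sequence with two of its three terms in $\class{W}$, the sequence splits (either the kernel is injective or the cokernel is projective), so the third term is a direct summand of an object of $\class{W}$, or a biproduct of two such, hence again in $\class{W}$. Theorem~\ref{them-Hovey's theorem for WIC-exact categories} then produces an exact model structure with cofibrant$=$fibrant$=\cat{A}$ and trivial objects $\class{W}$; unwinding its description, the cofibrations are precisely the admissible monomorphisms and the fibrations precisely the admissible epimorphisms. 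Finally $\Ho{\cat{A}} \cong \cat{A}/\hspace{-0.04in}\sim$ by the Fundamental Theorem recalled in Section~\ref{sec-preliminaries}, and Proposition~\ref{prop-left and right homotopic maps in exact model structures}(5) identifies $\sim$ as ``$g-f$ factors through a projective-injective'', i.e.\ this is the stable category.

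\textbf{The converse.} Assume such a model structure exists. Then the cofibrations, being the admissible monomorphisms, are closed under retracts by the retract axiom. Let $s\colon X \to Y$ be an arbitrary split monomorphism with retraction $r\colon Y \to X$, so $rs = 1_X$. The direct-summand inclusion $\iota\colon X \to X \oplus Y$, the first map of the split admissible sequence $X \rightarrowtail X \oplus Y \twoheadrightarrow Y$, is an admissible monomorphism, hence a cofibration. I would realize $s$ as a retract of $\iota$: take identities along the source row $X \to X \to X$, and along the target row the map $c\colon Y \to X \oplus Y$ with components $r$ and $1_Y - sr$, followed by the map $d\colon X \oplus Y \to Y$ with components $s$ and $1_Y$; a direct check using $rs = 1_X$ shows $dc = 1_Y$ and that both squares (with vertical maps $s$, $\iota$, $s$) commute. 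Closure of cofibrations under retracts then forces $s$ to be an admissible monomorphism, so $s$ has a cokernel. Since $s$ was arbitrary, every split monomorphism has a cokernel, which is exactly weak idempotent completeness of $\cat{A}$.

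The hard part, such as it is, will be spotting the correct retract diagram in the converse: the two maps $c$ and $d$ assembled from $r$ and $sr$ are the whole content, and once they are written down everything is formal. In the forward direction there is no real obstacle, but I would flag that the two appeals to ``closed under direct summands'' --- for the injective objects when computing $\rightperp{\cat{A}}$, and for $\class{W}$ when proving thickness --- genuinely use the WIC hypothesis, which is precisely why the statement is an honest equivalence rather than a triviality.
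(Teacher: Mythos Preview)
Your proof is correct and follows the same two-part strategy as the paper: apply Theorem~\ref{them-Hovey's theorem for WIC-exact categories} to the Hovey triple $(\cat{A},\class{W},\cat{A})$ for one direction, and use closure of cofibrations under retracts for the other. The paper is terser, citing \cite[Proposition~2.4]{gillespie-exact model structures} for the retract argument and leaving the Hovey-triple verification implicit, whereas you unpack both; in particular your explicit retract diagram realizing $s$ as a retract of $\iota$ is exactly the content of that cited proposition. One small remark: your parenthetical ``this is where WIC enters'' attached to closure of injectives under summands is slightly misplaced, since retracts of injectives are injective in any exact category; the genuine role of WIC in that direction is simply that it is a standing hypothesis of Theorem~\ref{them-Hovey's theorem for WIC-exact categories}.
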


\begin{proof}
If $\cat{A}$ has such a model structure then since cofibrations satisfy the retract axiom we see that the admissible monomorphisms are closed under retracts. Proposition~2.4 of~\cite{gillespie-exact model structures} now says that $\cat{A}$ must be weakly idempotent complete. On the other hand, if $\cat{A}$ is weakly idempotent complete then Theorem~\ref{them-Hovey's theorem for WIC-exact categories} gives an exact model structure coming from the Hovey triple $(\class{A},\class{W},\class{A})$ where $\class{W}$ are the projective-injective objects.
\end{proof}

Since by definition, $\cat{A}$ has enough projective and injective objects we have from a WIC exact version of~\cite[Lemma~2.3]{gillespie-recollements} that a cotorsion pair $(\class{U},\class{V})$ is hereditary if and only if $\class{U}$ is resolving (or just syzygy closed) if and only if $\class{V}$ is coresolving (or just cosyzygy closed). The following proposition tells us more. In particular, $\class{U}$ is thick if and only if $\class{V}$ is thick.

 \begin{proposition}\label{prop-cot pairs in frobenius cats}
Let $(\class{U}, \class{V})$ be a cotorsion pair in a WIC Frobenius category $\cat{A}$. Then the following are equivalent.
\begin{enumerate}
\item $(\class{U}, \class{V})$ is hereditary with $\class{U}$ cosyzygy closed or $\class{V}$ syzygy closed.
\item $\class{U}$ is both syzygy and cosyzygy closed.
\item $\class{V}$ is both syzygy and cosyzygy closed.
\item $\class{U}$ is thick.
\item $\class{V}$ is thick.
\end{enumerate}
Moreover, if $(\class{U},\class{V})$ is complete then the conditions above are also equivalent to:
\begin{enumerate}
\setcounter{enumi}{5}
\item $(\class{U}, \class{V})$ is an injective cotorsion pair. That is, $\class{U}$ is thick and $\class{U} \cap \class{V}$ is the class of injectives.
\item $(\class{U}, \class{V})$ is a projective cotorsion pair. That is, $\class{V}$ is thick and $\class{U} \cap \class{V}$ is the class of projectives.
\end{enumerate}

\end{proposition}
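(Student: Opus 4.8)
The plan is to establish $(1)$--$(5)$ first (these hold for any cotorsion pair in a WIC Frobenius category), and then add $(6)$ and $(7)$ using completeness. Throughout I would use that $\cat{A}$ Frobenius means the injectives coincide with the projectives (the ``projective-injectives''); hence the left class $\class{U}$, which always contains the projectives, contains every projective-injective, and dually so does $\class{V}$, and both classes are closed under finite direct sums, extensions and direct summands. I would also invoke the cited WIC-exact analogue of \cite[Lemma~2.3]{gillespie-recollements}: $(\class{U},\class{V})$ is hereditary $\iff$ $\class{U}$ is syzygy closed $\iff$ $\class{V}$ is cosyzygy closed.

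The key input — and the step I expect to be the main obstacle — is the ``mirror'' equivalence not supplied by that lemma: \emph{$\class{U}$ is cosyzygy closed $\iff$ $\class{V}$ is syzygy closed}. I would obtain this from a dimension-shift identity valid in any Frobenius category: for all objects $U,V$, picking admissible short exact sequences $U\rightarrowtail W\twoheadrightarrow\Sigma U$ and $\Omega V\rightarrowtail W'\twoheadrightarrow V$ with $W,W'$ projective-injective, and applying $\Hom(-,V)$, resp.\ $\Hom(U,-)$, the long exact sequences terminate (projective-injectives kill $\Ext^1$ in either variable) and give
$$\Ext^1_{\cat{A}}(\Sigma U,V)\ \cong\ \underline{\Hom}_{\cat{A}}(U,V)\ \cong\ \Ext^1_{\cat{A}}(U,\Omega V),$$
where $\underline{\Hom}_{\cat{A}}(U,V)$ denotes $\Hom(U,V)$ modulo maps factoring through a projective-injective; the only thing to check is that the images of $\Hom(W,V)\to\Hom(U,V)$ and of $\Hom(U,W')\to\Hom(U,V)$ both consist exactly of such maps, which is routine since a map into an injective extends along $U\rightarrowtail W$ and a map out of $U$ into a projective lifts along $W'\twoheadrightarrow V$. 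Because $\class{U}={}^{\perp}\class{V}$ and $\class{V}=\class{U}^{\perp}$, ``$\Sigma U\in\class{U}$ for all $U\in\class{U}$'' says $\Ext^1(\Sigma U,V)=0$ for all $U\in\class{U},V\in\class{V}$, while ``$\Omega V\in\class{V}$ for all $V\in\class{V}$'' says $\Ext^1(U,\Omega V)=0$ for all such $U,V$; the display shows these are the same condition. Combining with the cited lemma gives $(1)\iff(2)\iff(3)$: the ``or'' in $(1)$ collapses to a single condition, making $(1)$ literally $(2)$, and $(2)\iff(3)$ since $\class{U}$ syzygy closed $\iff\class{V}$ cosyzygy closed and $\class{U}$ cosyzygy closed $\iff\class{V}$ syzygy closed.

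Next I would prove $(2)\iff(4)$, and then get $(3)\iff(5)$ for free by applying $(2)\iff(4)$ to $\cat{A}^{\mathrm{op}}$ (again WIC Frobenius, with cotorsion pair $(\class{V},\class{U})$ and syzygy/cosyzygy interchanged). For $(4)\Rightarrow(2)$, thickness gives two-out-of-three for admissible short exact sequences together with closure under summands, so $\Omega U\rightarrowtail P\twoheadrightarrow U$ and $U\rightarrowtail I\twoheadrightarrow\Sigma U$ with $P,I$ projective-injective (hence in $\class{U}$) force $\Omega U,\Sigma U\in\class{U}$ whenever $U\in\class{U}$. For $(2)\Rightarrow(4)$ I would verify the two two-out-of-three statements by standard pullback/pushout moves: given $A\rightarrowtail B\twoheadrightarrow C$ with $B,C\in\class{U}$, pull back along an admissible epi $P_C\twoheadrightarrow C$ from a projective (kernel $\Omega C\in\class{U}$ by syzygy closure) to produce $A\rightarrowtail B'\twoheadrightarrow P_C$ (split, so $A$ is a summand of $B'$) and $\Omega C\rightarrowtail B'\twoheadrightarrow B$ (so $B'\in\class{U}$, hence $A\in\class{U}$); dually, with $A,B\in\class{U}$, push out along an admissible mono $A\rightarrowtail I_A$ into an injective to produce $I_A\rightarrowtail Q\twoheadrightarrow C$ (split, so $C$ is a summand of $Q$) and $B\rightarrowtail Q\twoheadrightarrow\Sigma A$ (so $Q\in\class{U}$ using cosyzygy closure, hence $C\in\class{U}$).

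Finally, assume $(\class{U},\class{V})$ is complete. Then $(4)\Rightarrow(6)$: $\class{U}$ is thick and contains the projectives, which in a Frobenius category are the injectives, so $\class{U}$ is a thick subcategory containing the injectives and Proposition~\ref{prop-characterizations of injective cotorsion pairs} shows $(\class{U},\class{V})$ is an injective cotorsion pair, i.e.\ $\class{U}\cap\class{V}$ is the class of injectives; and $(6)\Rightarrow(4)$ is trivial. Dually, $(5)\Leftrightarrow(7)$ follows from the projective dual of that proposition, using that $\class{V}$ contains the injectives $=$ projectives. Since $(4)\iff(5)$ is already in hand, all seven conditions are equivalent. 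The genuinely delicate point is the Frobenius dimension-shift isomorphism of the second paragraph; everything else is diagram chasing, with repeated use of the fact that an admissible short exact sequence with a projective-injective at one end splits.
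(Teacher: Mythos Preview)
Your proof is correct, but the route differs from the paper's. The paper never establishes the ``mirror'' equivalence ($\class{U}$ cosyzygy closed $\iff$ $\class{V}$ syzygy closed) directly; instead it invokes the external \cite[Lemma~3.5]{gillespie-recollements} to pass from ``hereditary with $\class{V}$ syzygy closed'' to ``$\class{U}$ thick'' (and dually to ``$\class{V}$ thick''), and then closes the long cycle $(1')\Rightarrow(4)\Rightarrow(2)\Rightarrow(1'')\Rightarrow(5)\Rightarrow(3)\Rightarrow(1')$, where $(1')$ and $(1'')$ are the two halves of the ``or'' in $(1)$. Your approach front-loads the Frobenius dimension-shift identity $\Ext^1(\Sigma U,V)\cong\underline{\Hom}(U,V)\cong\Ext^1(U,\Omega V)$, which immediately collapses the ``or'' in $(1)$ and gives $(1)\iff(2)\iff(3)$ in one stroke; you then prove $(2)\Rightarrow(4)$ by explicit pullback/pushout rather than citing an external lemma. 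The trade-off: your argument is entirely self-contained and exposes the stable-category mechanism cleanly, while the paper's is shorter on the page because it outsources the thickness step to a result already on record. For $(6)$ and $(7)$ the two proofs coincide.
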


\begin{proof}
Note that the projective-injective objects are automatically in both $\class{U}$ and $\class{V}$. Again we will cite WIC exact versions of basic results appearing in~\cite{gillespie-recollements}. We have (2) implies (1) by~\cite[Lemma~2.3]{gillespie-recollements}. We will show (1) implies (4), in particular, hereditary together with $\class{V}$ syzygy closed implies $\class{U}$ is thick. Indeed given any $V \in \class{V}$, we may use enough projectives to find an exact $K \rightarrowtail P \twoheadrightarrow V$ where $P$ is projective-injective. By assumption we have $K \in \class{V}$. So it follows from~\cite[Lemma~3.5]{gillespie-recollements} that $\class{U}$ is thick, proving (4). Now (4) implies (2) trivially. So we have shown (2) implies (1) implies (4) implies (2).

Similarly, we have (3) implies (1) which implies (5) (here, the dual of~\cite[Lemma~3.5]{gillespie-recollements} applies to show $\class{U}$ cosyzygy closed implies $\class{V}$ thick) which implies (3). Together this proves the equivalence of (1)--(5).

Next assume that $(\class{U}, \class{V})$ is complete. Then we see from Proposition~\ref{prop-characterizations of injective cotorsion pairs} that $(\class{U}, \class{V})$ is an injective cotorsion pair if and only if $\class{U}$ is thick. Similarly, the dual \cite[Proposition~3.7~(3)]{gillespie-recollements} shows that $(\class{U}, \class{V})$ is a projective cotorsion pair if and only if $\class{V}$ is thick.

\end{proof}

This leads us to make the following convenient definition.

\begin{definition}\label{def-localizing cot pair in frobenius}
 We call a complete cotorsion pair $(\class{U}, \class{V})$ in a WIC Frobenius category a \emph{localizing cotorsion pair} if it satisfies any of the equivalent conditions of Proposition~\ref{prop-cot pairs in frobenius cats}. We note that in this case $\class{U} \cap \class{V}$ equals the class of projective-injective objects.

\end{definition}

\begin{remark}
Note that Salce's argument applies in this setting, so a sufficient condition for $(\class{U}, \class{V})$ to be complete is that it has either enough projectives or enough injectives.
\end{remark}

A localizing cotorsion pair in $\class{A}$ is equivalent to a Bousfield localizing pair in $\class{A}/\sim$ through a simple correspondence $(\class{U}, \class{V}) \leftrightarrow (\class{U}/\sim \,, \class{V}/\sim)$. This follows from~\cite[Proposition~3.8]{saorin-stovicek}, where the definition of a Bousfield localizing pair also appears. Alternatively, the correspondence is a special case of a WIC exact category version of~\cite[Proposition~4.15]{gillespie-recollements}. Again, our focus here is on the cotorsion pairs as they are equivalent to exact model structures and give us a general framework to describe localizations via the ground category $\class{A}$. Indeed any localizing cotorsion pair in a WIC Frobenius category has associated to it \emph{two} exact model structures on $\cat{A}$. An injective one, killing the objects of $\class{U}$ and a projective one killing the objects of $\class{V}$.

\begin{corollary}\label{cor-two model strucs from a localizing cot pair}
Let $\cat{A}$ be a WIC Frobenius category and let $\class{M} = (\class{U},\class{V})$ be a localizing cotorsion pair. Then  $\class{M} = (\class{U},\class{V})$ gives rise to two model structures on $\cat{A}$. The first is the Hovey triple $\class{M}^i = (\class{A}, \class{U}, \class{V})$ and we call it the \emph{injective model structure induced by $(\class{U},\class{V})$}. The second is the Hovey triple $\class{M}^p = (\class{U}, \class{V}, \class{A})$ and we call it the \emph{projective model structure induced by $(\class{U},\class{V})$}.
\end{corollary}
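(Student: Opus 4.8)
The plan is to reduce everything to Theorem~\ref{them-Hovey's theorem for WIC-exact categories}, which says that a triple $(\class{Q},\class{W},\class{R})$ in a WIC exact category determines an exact model structure precisely when $\class{W}$ is thick and both $(\class{Q},\class{R}\cap\class{W})$ and $(\class{Q}\cap\class{W},\class{R})$ are complete cotorsion pairs. Since $(\class{U},\class{V})$ is a localizing cotorsion pair, Proposition~\ref{prop-cot pairs in frobenius cats} already hands us that $\class{U}$ and $\class{V}$ are both thick and that $(\class{U},\class{V})$ is complete, while Definition~\ref{def-localizing cot pair in frobenius} records that $\class{U}\cap\class{V}$ is exactly the class of projective-injective objects of $\cat{A}$.

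For $\class{M}^i = (\class{A},\class{U},\class{V})$ the class of trivial objects is $\class{W}=\class{U}$, which is thick, and $(\class{Q}\cap\class{W},\class{R}) = (\class{U},\class{V})$ is complete by hypothesis, so the only thing left to check is that $(\class{Q},\class{R}\cap\class{W}) = (\class{A},\class{U}\cap\class{V})$ is a complete cotorsion pair; equivalently, since $\class{U}\cap\class{V}$ is the class of injective objects of $\cat{A}$, that $(\class{A},\,\text{injectives})$ is a complete cotorsion pair. For this I would first note $\class{A}^{\perp} = \{\,\text{injectives}\,\}$: one inclusion is clear, and if $\Ext^1_{\cat{A}}(A,M)=0$ for all $A$, then embedding $M$ into an injective via an admissible monomorphism $M \rightarrowtail I \twoheadrightarrow I/M$ gives a sequence whose class in $\Ext^1_{\cat{A}}(I/M,M)$ vanishes, so it splits, exhibiting $M$ as a summand (retracts coincide with summands in a WIC category) of the injective $I$, hence $M$ is injective. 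The reverse orthogonal ${}^{\perp}(\text{injectives}) = \class{A}$ is trivial. Completeness is then free: $\cat{A}$ is Frobenius, so has enough injectives, giving for each $X$ an admissible sequence $X \rightarrowtail I \twoheadrightarrow A$ with $I$ injective and $A \in \class{A}$; and the split sequence $0 \rightarrowtail X \twoheadrightarrow X$ witnesses ``enough projectives'' for this pair since $0$ is injective. Thus $\class{M}^i$ is a Hovey triple, and Theorem~\ref{them-Hovey's theorem for WIC-exact categories} produces the exact model structure with $\class{U}$ as its trivial objects.

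The case $\class{M}^p = (\class{U},\class{V},\class{A})$ is the precise projective dual: $\class{W}=\class{V}$ is thick, $(\class{Q},\class{R}\cap\class{W}) = (\class{U},\class{V})$ is complete by hypothesis, and $(\class{Q}\cap\class{W},\class{R}) = (\class{U}\cap\class{V},\class{A}) = (\text{projectives},\class{A})$ is a complete cotorsion pair by the dual of the argument just given, using that $\cat{A}$ has enough projectives and that projectives are closed under summands by weak idempotent completeness. Hence $\class{M}^p$ is a Hovey triple as well. I do not expect a genuine obstacle here; the argument is just unwinding Theorem~\ref{them-Hovey's theorem for WIC-exact categories} and Proposition~\ref{prop-cot pairs in frobenius cats}, and the only step asking for a little care is the identification of $\class{A}^{\perp}$ with the injectives together with the (easy) completeness of the ``trivial'' cotorsion pairs $(\class{A},\text{injectives})$ and $(\text{projectives},\class{A})$, which is where weak idempotent completeness is actually used.
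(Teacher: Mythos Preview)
Your proof is correct. The paper gives no proof of this corollary at all, treating it as immediate from Proposition~\ref{prop-cot pairs in frobenius cats}\,(6)--(7) (a localizing cotorsion pair is simultaneously an injective and a projective cotorsion pair) together with the remark in Subsection~\ref{subsec-injective cotorsion pairs} that an injective cotorsion pair in a WIC exact category with enough injectives automatically yields an exact model structure; your argument simply unpacks this by verifying the Hovey-triple conditions of Theorem~\ref{them-Hovey's theorem for WIC-exact categories} directly, which is the same content made explicit.
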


Note that the canonical cotorsion pair $\class{M} = (\class{W},\class{A})$,  where $\class{W}$ is the class of projective-injective objects, is localizing. Viewing it as the categorical projective cotorsion pair $\class{M}^p = (\class{W},\class{A})$ corresponds to the trivial model $(\class{W},\class{A},\class{A})$. But it may also be viewed as the categorical Gorenstein injective cotorsion pair $\class{M}^i = (\class{W},\class{A})$ which corresponds to $(\class{A},\class{W},\class{A})$ and is a model for the stable category $\class{A}/\sim$. For any other localizing cotorsion pair $\class{N} = (\class{U},\class{V})$ we note that the right localization satisfies $\class{M}^i/\class{N}^i = \class{N}^p$. Similar observations apply to the other canonical cotorsion pair $(\class{A},\class{W})$ and left localization.

\subsection{Localizing cotorsion triples and recollements}
We wish to show now that Theorems~\ref{them-recollements in krause form} and~\ref{them-recollements theorem projective version} each recover the recollement associated to a torsion triple in the stable category $\cat{A}/\sim$ of a WIC Frobenius category $\cat{A}$. Recall that a torsion triple in a triangulated category $\class{T}$ is a triple $(\class{X},\class{Y},\class{Z})$ of thick subcategories of $\class{T}$ for which $(\class{X},\class{Y})$ and $(\class{Y},\class{Z})$ are each torsion pairs. See~\cite{beligiannis-reiten}. Torsion triples correspond to recollements in the way described in~\cite[page~25]{gillespie-recollements}. To lift this to the level of model structures, lets call a triple of classes $(\class{X},\class{Y},\class{Z})$ in a WIC Frobenius category $\cat{A}$ a \textbf{localizing cotorsion triple} if $(\class{X},\class{Y})$ and $(\class{Y},\class{Z})$ are localizing cotorsion pairs in $\cat{A}$.

\begin{corollary}\label{cor-recollements for WIC Frobenius cats}
Let $(\class{X},\class{Y},\class{Z})$ be classes in a WIC Frobenius category $\cat{A}$. Then $(\class{X},\class{Y},\class{Z})$ is a localizing cotorsion triple in $\cat{A}$ if and only if
$(\class{X}/\sim,\class{Y}/\sim,\class{Z}/\sim)$ is a torsion triple in the stable category $\cat{A}/\sim \,$. In this case, we have equivalences of triangulated categories
$$\class{X}/\sim \ \ \cong \ \class{A}/\class{Y} \ \ \cong \ \class{Z}/\sim \,.$$ Moreover, Theorems~\ref{them-recollements in krause form} and~\ref{them-recollements theorem projective version} each recover the expected recollement in the following way.
\begin{enumerate}
\item Let $\class{M}_1 = (\class{W},\class{A})$ be the canonical localizing cotorsion pair so that $\class{M}^i_1 = (\class{W},\class{A})$ is a model for the stable category $\cat{A}/\sim \,$. Taking in Theorem~\ref{them-recollements in krause form} the injective cotorsion pairs to be $$\class{M}^i_1 = (\class{W}, \class{A}) , \ \ \ \class{M}^i_2 = (\class{X}, \class{Y}) , \ \ \ \class{M}^i_3 = (\class{Y}, \class{Z})$$
yields a recollement as below.
\[
\begin{tikzpicture}[node distance=3.5cm]
\node (A) {$\mathcal{Y}/\sim$};
\node (B) [right of=A] {$\mathcal{A}/\sim$};
\node (C) [right of=B] {$\class{A}/\class{Y}$};
\draw[<-,bend left=40] (A.20) to node[above]{\small E$(\class{X},\class{Y})$} (B.160);
\draw[->] (A) to node[above]{\small $I$} (B);
\draw[<-,bend right=40] (A.340) to node [below]{\small C$(\class{Y},\class{Z})$} (B.200);
\draw[<-,bend left] (B.20) to node[above]{\small $\lambda = \text{C}(\class{X},\class{Y})$} (C.160);
\draw[->] (B) to node[above]{\small Q} (C);
\draw[<-,bend right] (B.340) to node [below]{\small $\rho = \text{E}(\class{Y},\class{Z})$} (C.200);
\end{tikzpicture}
\]
The functor $I$ is inclusion while $Q$ is the quotient functor of Lemma~\ref{lemma-quotient map}. We point out that $\lambda$ has essential image $\class{X}/\sim$ while  $\rho$ has essential image $\class{Z}/\sim$ and they provide an equivalence $\lambda : \class{Z}/\sim \, \longleftrightarrow  \class{X}/\sim \, : \rho$.

\item On the other hand, consider the canonical cotorsion pair $\class{M}_1 = (\class{A},\class{W})$. Note $\class{M}^p_1 = (\class{A},\class{W})$ is again the same model for the stable category $\cat{A}/\sim \,$. Taking in Theorem~\ref{them-recollements theorem projective version} the projective cotorsion pairs to be $$\class{M}^p_1 = (\class{A}, \class{W}) , \ \ \ \class{M}^p_2 = (\class{Y}, \class{Z}) , \ \ \ \class{M}^p_3 = (\class{X}, \class{Y})$$
yields the exact same recollement diagram as the one above.

\end{enumerate}

\end{corollary}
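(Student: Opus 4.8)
The plan is to obtain everything from two ingredients that are already in place: the dictionary between localizing cotorsion pairs in a WIC Frobenius category and torsion pairs in its stable category, and Theorems~\ref{them-recollements in krause form} and~\ref{them-recollements theorem projective version}.

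\emph{The equivalence.} By the discussion preceding the corollary---equivalently by~\cite[Proposition~3.8]{saorin-stovicek} or a WIC exact version of~\cite[Proposition~4.15]{gillespie-recollements}---the assignment $(\class{U},\class{V})\mapsto(\class{U}/\sim,\class{V}/\sim)$ is a bijective correspondence between localizing cotorsion pairs in $\cat{A}$ and torsion pairs in $\cat{A}/\sim$; in particular a pair of (summand-closed) classes $(\class{U},\class{V})$ in $\cat{A}$ is a localizing cotorsion pair if and only if $(\class{U}/\sim,\class{V}/\sim)$ is a torsion pair, since such classes are determined by their images in $\cat{A}/\sim$. Applying this to $(\class{X},\class{Y})$ and to $(\class{Y},\class{Z})$ shows at once that $(\class{X},\class{Y},\class{Z})$ is a localizing cotorsion triple if and only if $(\class{X}/\sim,\class{Y}/\sim,\class{Z}/\sim)$ is a torsion triple.

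\emph{Part (1).} Assuming the equivalent conditions hold, I would check that Theorem~\ref{them-recollements in krause form} applies to the injective cotorsion pairs $\class{M}_1=(\class{W},\class{A})$, $\class{M}_2=(\class{X},\class{Y})$, $\class{M}_3=(\class{Y},\class{Z})$; in the notation $\class{M}_i=(\class{W}_i,\class{F}_i)$ of that theorem this means $\class{F}_1=\class{A}$, $\class{F}_2=\class{Y}$, $\class{F}_3=\class{Z}$, $\class{W}_3=\class{Y}$. Each $\class{M}_i$ is an injective cotorsion pair: $\class{M}_2,\class{M}_3$ because they are localizing (Definition~\ref{def-localizing cot pair in frobenius} and Proposition~\ref{prop-cot pairs in frobenius cats}), and $\class{M}_1=(\class{W},\class{A})$ because $\class{W}$, the class of projective--injectives, is thick---a conflation with projective--injective ends splits and $\cat{A}$ is weakly idempotent complete---and contains the injectives, so Proposition~\ref{prop-characterizations of injective cotorsion pairs} applies; moreover its model structure is $(\class{A},\class{W},\class{A})$, a model for $\cat{A}/\sim$ by Proposition~\ref{prop-frobenius cats are exact model cats iff WIC}. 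The containments $\class{F}_2,\class{F}_3\subseteq\class{F}_1$ and the key identity $\class{W}_3\cap\class{F}_1=\class{Y}\cap\class{A}=\class{Y}=\class{F}_2$ hold trivially because $\class{F}_1=\class{A}$. Hence Theorem~\ref{them-recollements in krause form} produces a recollement with $\class{F}_2/\sim=\class{Y}/\sim$, $\class{F}_1/\sim=\cat{A}/\sim$, $\cat{A}/\class{W}_3=\cat{A}/\class{Y}$. It remains to identify the functors: since $\class{F}_1=\class{A}$, every object is already fibrant in $\class{M}_1$, so $\textnormal{E}(\class{M}_1)$ is naturally the identity, giving $\lambda=\textnormal{C}(\class{M}_2)\circ\textnormal{E}(\class{M}_1)=\textnormal{C}(\class{X},\class{Y})$ and $\rho=\textnormal{E}(\class{M}_3)=\textnormal{E}(\class{Y},\class{Z})$; the left-hand functors are $\textnormal{E}(\class{M}_2)=\textnormal{E}(\class{X},\class{Y})$ and $\textnormal{C}(\class{M}_3)=\textnormal{C}(\class{Y},\class{Z})$; and the essential images are $(\class{W}_2\cap\class{F}_1)/\sim=\class{X}/\sim$ and $\class{F}_3/\sim=\class{Z}/\sim$. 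The equivalence $\lambda:\class{Z}/\sim\,\longleftrightarrow\,\class{X}/\sim\,:\rho$ of Theorem~\ref{them-recollements in krause form}, together with $\cat{A}/\class{Y}=\Ho{\class{M}_1/\class{M}_2}\cong\Ho{\class{M}_3}=\class{Z}/\sim$, yields the chain $\class{X}/\sim\,\cong\,\cat{A}/\class{Y}\,\cong\,\class{Z}/\sim$. This is exactly the content of part (1).

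\emph{Part (2) and the obstacle.} For part (2) I would run the dual argument with Theorem~\ref{them-recollements theorem projective version} on the projective cotorsion pairs $\class{M}_1=(\class{A},\class{W})$, $\class{M}_2=(\class{Y},\class{Z})$, $\class{M}_3=(\class{X},\class{Y})$, i.e.\ (in that theorem's notation $\class{M}_i=(\class{C}_i,\class{W}_i)$) $\class{C}_1=\class{A}$, $\class{C}_2=\class{Y}$, $\class{C}_3=\class{X}$, $\class{W}_3=\class{Y}$; the hypotheses $\class{C}_2,\class{C}_3\subseteq\class{C}_1$ and $\class{W}_3\cap\class{C}_1=\class{C}_2$ again hold for free because $\class{C}_1=\class{A}$, and $(\class{A},\class{W})$ is a projective cotorsion pair by the dual reasoning. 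Dually, every object is cofibrant in $\class{M}_1$, so $\textnormal{C}(\class{M}_1)$ is the identity, whence $\rho=\textnormal{E}(\class{M}_2)\circ\textnormal{C}(\class{M}_1)=\textnormal{E}(\class{Y},\class{Z})$, $\lambda=\textnormal{C}(\class{M}_3)=\textnormal{C}(\class{X},\class{Y})$, the leftmost functors are $\textnormal{E}(\class{M}_3)=\textnormal{E}(\class{X},\class{Y})$ and $\textnormal{C}(\class{M}_2)=\textnormal{C}(\class{Y},\class{Z})$, and the essential images are $\class{C}_3/\sim=\class{X}/\sim$ and $(\class{W}_2\cap\class{C}_1)/\sim=\class{Z}/\sim$. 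Comparing node by node and arrow by arrow with part (1), this is literally the same recollement, establishing (2). There is no genuine difficulty here: the homotopical content is entirely carried by Theorems~\ref{them-recollements in krause form} and~\ref{them-recollements theorem projective version} and by the localizing-pair/torsion-pair dictionary. The only points requiring care are bookkeeping---aligning the slot conventions of the two theorems with the pairs one plugs in, and noticing that the replacement functors attached to the canonical pair $\class{M}_1$ collapse to identities so that the composites $\lambda$ and $\rho$ reduce to the single named functors---together with the minor verification that $(\class{W},\class{A})$ and $(\class{A},\class{W})$ qualify as an injective and a projective cotorsion pair, which hinges on thickness of the class of projective--injectives.
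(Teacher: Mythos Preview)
Your proposal is correct and follows essentially the same approach as the paper: cite~\cite[Proposition~3.8]{saorin-stovicek} for the torsion-triple equivalence, then verify the hypotheses of Theorems~\ref{them-recollements in krause form} and~\ref{them-recollements theorem projective version} for the indicated triples. You are considerably more explicit than the paper's two-line proof, which merely records the observations $\class{M}^i_1/\class{M}^i_2=\class{M}^p_2$ (same trivial class $\class{Y}$ as $\class{M}^i_3$) and $\class{M}^i_1/\class{M}^i_3=\class{M}^p_3$ (same cofibrant-fibrant class $\class{Y}$ as $\class{M}^i_2$) and notes the swap of $\class{M}_2,\class{M}_3$ in the projective case; your direct verification of $\class{W}_3\cap\class{F}_1=\class{F}_2$ and the collapse of $\textnormal{E}(\class{M}_1)$, $\textnormal{C}(\class{M}_1)$ to identities makes the same points from a slightly different angle.
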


\begin{proof}
It follows from~\cite[Proposition~3.8]{saorin-stovicek} that $(\class{X},\class{Y},\class{Z})$ is a localizing cotorsion triple in $\cat{A}$ if and only if
$(\class{X}/\sim,\class{Y}/\sim,\class{Z}/\sim)$ is a torsion triple in $\cat{A}/\sim \,$. Note that $\class{M}^i_1/\class{M}^i_2 = \class{M}^p_2$ has the same trivial objects, $\class{Y}$, as $\class{M}^i_3$. Also, $\class{M}^i_1/\class{M}^i_3 = \class{M}^p_3$ has the same cofibrant-fibrant objects, again $\class{Y}$, as $\class{M}^i_2$.

For the projective case, note that we have swapped the role of $\class{M}_2$ and $\class{M}_3$.

\end{proof}

\section{Localizing cotorsion pairs and triples in $\cha{A}_{dw}$}\label{sec-localizing cot pairs in ch_dw}

Recollement situations involving homotopy categories of chain complexes are of particular interest and will be the focus of the rest of this paper. In this section we wish to pursue further the notion of localizing cotorsion pairs, but specialized to the general setting of chain complexes over an additive category $\cat{A}$.
Since we are interested in the correspondence between model structures and cotorsion pairs we now freely assume that $\cat{A}$ is weakly idempotent complete. This makes $\cha{A}_{dw}$ WIC Frobenius (see Subsection~\ref{subsec-chain complexes form a Frobenius cat}). We get the following characterization of hereditary cotorsion pairs in $\cha{A}_{dw}$ by Lemma~\ref{lemma-suspensions are cosyzygies}.

\begin{proposition}\label{prop-hereditary dw cotorsion pairs}
Let $(\class{X}, \class{Y})$ be a cotorsion pair in the WIC Frobenius $\cha{A}_{dw}$. Then the following are equivalent.
\begin{enumerate}
\item $(\class{X}, \class{Y})$ is a hereditary cotorsion pair.
\item $\class{X}$ is closed under negative suspensions.
\item $\class{Y}$ is closed under positive suspensions.

\end{enumerate}

\begin{proof}
Any cotorsion pair in a WIC exact category will have each class closed under finite direct sums and direct summands. So $(\class{X},\class{Y})$ being a cotorsion pair in $\cha{A}_{dw}$ implies $\class{X}$ and $\class{Y}$ each must be closed under direct sums and direct summands. Clearly each class also contains the contractible complexes, which are the projective-injective objects. So the result follows from combining Lemma~\ref{lemma-suspensions are cosyzygies} with a WIC exact version of~\cite[Lemma~2.3]{gillespie-recollements}. The content of this cited Lemma is standard. See the paragraph before Proposition~\ref{prop-cot pairs in frobenius cats}.
\end{proof}
\end{proposition}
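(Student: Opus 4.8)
The plan is to reduce the whole equivalence to two already-available ingredients: Lemma~\ref{lemma-suspensions are cosyzygies}, which translates closure under positive/negative suspensions into cosyzygy/syzygy closure in the Frobenius category $\cha{A}_{dw}$, and the standard criterion (a WIC exact version of \cite[Lemma~2.3]{gillespie-recollements}, referenced just before Proposition~\ref{prop-cot pairs in frobenius cats}) that a cotorsion pair is hereditary exactly when its left class is syzygy closed, equivalently when its right class is cosyzygy closed.

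First I would check that both classes of the cotorsion pair meet the running hypotheses of Lemma~\ref{lemma-suspensions are cosyzygies}. Additivity of $\Ext^1_{dw}$ forces $\class{X}$ and $\class{Y}$ each to be closed under finite direct sums and under direct summands (retracts). Moreover, by Proposition~\ref{prop-Ch(A)_dw is Frobenius} the projective-injective objects of $\cha{A}_{dw}$ are precisely the contractible complexes, and a projective-injective object lies in the left class of any cotorsion pair (being projective, it is left orthogonal to everything, in particular to $\class{Y}$) and in the right class (being injective, it is right orthogonal to everything, in particular to $\class{X}$); hence every contractible complex lies in both $\class{X}$ and $\class{Y}$. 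So Lemma~\ref{lemma-suspensions are cosyzygies} applies to each of $\class{X}$ and $\class{Y}$.

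Then I would simply chain the equivalences: Lemma~\ref{lemma-suspensions are cosyzygies}(2) applied to $\class{X}$ gives (2) $\Leftrightarrow$ ``$\class{X}$ is syzygy closed in $\cha{A}_{dw}$''; the hereditary criterion gives ``$\class{X}$ syzygy closed'' $\Leftrightarrow$ (1) $\Leftrightarrow$ ``$\class{Y}$ cosyzygy closed''; and Lemma~\ref{lemma-suspensions are cosyzygies}(1) applied to $\class{Y}$ gives ``$\class{Y}$ cosyzygy closed'' $\Leftrightarrow$ (3). This closes the loop (1) $\Leftrightarrow$ (2) $\Leftrightarrow$ (3). I do not anticipate any real obstacle; the only point requiring a bit of care is that Lemma~\ref{lemma-suspensions are cosyzygies} must be invoked separately for the left class (for the negative-suspension/syzygy half) and for the right class (for the positive-suspension/cosyzygy half), rather than attempting to route both through a single class, and that the hereditary criterion being used is genuinely valid in the WIC exact generality — which is exactly the content of the standard lemma cited above, since $\cha{A}_{dw}$ has enough projectives and injectives by Proposition~\ref{prop-Ch(A)_dw is Frobenius}.
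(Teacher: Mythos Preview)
Your proposal is correct and follows essentially the same approach as the paper's own proof: verify that the classes of the cotorsion pair satisfy the hypotheses of Lemma~\ref{lemma-suspensions are cosyzygies} (closure under direct sums and summands, containment of contractible complexes), then combine that lemma with the standard hereditary criterion from the WIC exact version of \cite[Lemma~2.3]{gillespie-recollements}. You have simply spelled out the chain of equivalences more explicitly than the paper does.
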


Next, we make precise the fact that orthogonality of suspension closed classes with respect to $\Ext^1_{dw}$ is the same as orthogonality with respect to null homotopic maps. We state this in terms of the hom-complex $\homcomplex$.

\begin{definition}\label{def-Homcomplex pair}
Let $(\class{X},\class{Y})$ be a pair of classes of chain complexes in the WIC Frobenius $\cha{A}_{dw}$. We say that $(\class{X},\class{Y})$ is a $\homcomplex$-pair if $\class{X}$ and $\class{Y}$ are orthogonal with respect to $\homcomplex$-exactness. More precisely, this means all three of the following hold:
\begin{enumerate}
    \item $\homcomplex(X,Y)$ is exact for all $X \in \class{X}$ and $Y \in \class{Y}$.

    \item If $\homcomplex(X,Y)$ is exact for all $X \in \class{X}$, then $Y
    \in \class{Y}$.

    \item If $\homcomplex(X,Y)$ is exact for all $Y \in \class{Y}$, then $X \in
    \class{X}$.
\end{enumerate}
It is immediate from Lemma~\ref{lemma-homcomplex-basic-lemma} that $\class{X}$ and $\class{Y}$ are suspension closed and that in the above three conditions we can replace the statement ``$\homcomplex(X,Y)$ is exact'' with ``every chain map $f : X \xrightarrow{} Y$ is null homotopic''.

\end{definition}

\begin{proposition}\label{prop-HOM pairs classification}
Let $(\class{X}, \class{Y})$ be a cotorsion pair in the WIC Frobenius $\cha{A}_{dw}$. Then the following are equivalent.
\begin{enumerate}
\item $(\class{X}, \class{Y})$ is a $\homcomplex$-pair.
\item $\class{X}$ is closed under suspensions.
\item $\class{Y}$ is closed under suspensions.
\item $(\class{X}, \class{Y})$ is hereditary with $\class{X}$ cosyzygy closed or $\class{Y}$ syzygy closed.
\item $\class{X}$ is both syzygy and cosyzygy closed.
\item $\class{Y}$ is both syzygy and cosyzygy closed.
\item $\class{X}$ is thick.
\item $\class{Y}$ is thick.
\end{enumerate}

\end{proposition}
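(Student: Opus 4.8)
The plan is to settle conditions (4)--(8) first and then attach (1), (2), (3) to that block. Since $\cat{A}$ is weakly idempotent complete, Lemma~\ref{lemma-ch(A)} and Proposition~\ref{prop-Ch(A)_dw is Frobenius} make $\cha{A}_{dw}$ a WIC Frobenius category, so Proposition~\ref{prop-cot pairs in frobenius cats} applies verbatim (with $\cha{A}_{dw}$ in the role of the ambient WIC Frobenius category) to the cotorsion pair $(\class{X},\class{Y})$. Its conditions are exactly our (5), (6), (7), (8) together with (4), so the equivalence (4)$\Leftrightarrow$(5)$\Leftrightarrow$(6)$\Leftrightarrow$(7)$\Leftrightarrow$(8) comes for free, and all that remains is to weave in (1), (2), (3). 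I would do this by establishing (2)$\Leftrightarrow$(5), (3)$\Leftrightarrow$(6), (1)$\Rightarrow$(3) and (3)$\Rightarrow$(1), which closes the loop among all eight conditions.

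For (2)$\Leftrightarrow$(5) and (3)$\Leftrightarrow$(6), I would first note that in any cotorsion pair of a WIC exact category both classes are closed under direct sums and direct summands and contain all projective--injective objects; in $\cha{A}_{dw}$ these are the contractible complexes, so Lemma~\ref{lemma-suspensions are cosyzygies} applies to $\class{X}$ and to $\class{Y}$. Then $\class{X}$ is closed under suspensions precisely when it is closed under both positive and negative suspensions, which by Lemma~\ref{lemma-suspensions are cosyzygies} says that $\class{X}$ is simultaneously cosyzygy and syzygy closed in $\cha{A}_{dw}$ --- and that is exactly (5). The identical reasoning applied to $\class{Y}$ yields (3)$\Leftrightarrow$(6). (One can also route this through Proposition~\ref{prop-hereditary dw cotorsion pairs}, which already pins down the hereditary hypothesis to ``$\class{X}$ closed under negative suspensions''.)

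It remains to connect the $\homcomplex$-pair condition. The implication (1)$\Rightarrow$(3) is recorded in the remark following Definition~\ref{def-Homcomplex pair}: a $\homcomplex$-pair automatically has both of its classes suspension closed. For (3)$\Rightarrow$(1), assume $\class{Y}$ is closed under suspensions and recall from Lemma~\ref{lemma-homcomplex-basic-lemma} that $\homcomplex(A,B)$ is exact if and only if $\Ext^1_{dw}(A,\Sigma^m B)=0$ for every $m\in\Z$, and that $\Ext^1_{dw}(A,B)\cong H_{-1}\homcomplex(A,B)$ (the case $n=-1$). The first clause of Definition~\ref{def-Homcomplex pair} then follows: for $X\in\class{X}$ and $Y\in\class{Y}$ every shift $\Sigma^m Y$ lies in $\class{Y}$, so $\Ext^1_{dw}(X,\Sigma^m Y)=0$ because $(\class{X},\class{Y})$ is a cotorsion pair, whence $\homcomplex(X,Y)$ is exact. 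The second clause holds because if $\homcomplex(X,Y)$ is exact for all $X\in\class{X}$ then $\Ext^1_{dw}(X,Y)\cong H_{-1}\homcomplex(X,Y)=0$ for all $X\in\class{X}$, so $Y\in\class{X}^{\perp}=\class{Y}$; the third clause is the mirror argument on the other variable, giving $X\in{}^{\perp}\class{Y}=\class{X}$. Chaining everything: (1)$\Rightarrow$(3)$\Leftrightarrow$(6)$\Leftrightarrow$(5)$\Leftrightarrow$(2), together with (5)$\Leftrightarrow$(4)$\Leftrightarrow$(7)$\Leftrightarrow$(8) and (3)$\Rightarrow$(1), proves all eight conditions equivalent.

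None of this is genuinely hard; the only points that need attention --- the ``main obstacle'' in a purely bookkeeping sense --- are the routine verification that $\class{X}$ and $\class{Y}$ satisfy the hypotheses of Lemma~\ref{lemma-suspensions are cosyzygies} (closure under the relevant direct sums and summands, containment of the contractibles) and keeping the suspension shift $\Sigma^{-n-1}$ of Lemma~\ref{lemma-homcomplex-basic-lemma} straight when reading $\Ext^1_{dw}$ off the homology of $\homcomplex$. All the real content has been pushed into Propositions~\ref{prop-cot pairs in frobenius cats} and~\ref{prop-hereditary dw cotorsion pairs} and Lemmas~\ref{lemma-suspensions are cosyzygies} and~\ref{lemma-homcomplex-basic-lemma}, and this proof is essentially the wiring diagram connecting them.
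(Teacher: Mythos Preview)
Your proof is correct and follows essentially the same architecture as the paper: reduce (4)--(8) to Proposition~\ref{prop-cot pairs in frobenius cats}, link (2),(3) to (5),(6) via Lemma~\ref{lemma-suspensions are cosyzygies}, and close with (3)$\Rightarrow$(1) using Lemma~\ref{lemma-homcomplex-basic-lemma}. The one minor divergence is in attaching (1): you invoke the remark after Definition~\ref{def-Homcomplex pair} to get (1)$\Rightarrow$(3) directly, whereas the paper instead proves (1)$\Rightarrow$(6) by observing that $\homcomplex(X,-)$ is exact on degreewise split sequences and running the long exact homology sequence; both routes are short and valid.
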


\begin{proof}
Conditions (4) through (8) are precisely conditions (1) through (5) of Proposition~\ref{prop-cot pairs in frobenius cats} and so we already know (4) through (8) are equivalent. Moreover, we know (2) and (5) are equivalent by Lemma~\ref{lemma-suspensions are cosyzygies}, as are (3) and (6). Hence we already know (2) through (8) are all equivalent.

So we only need to show that (1) implies any of the other conditions and that vice versa, any one of the other conditions implies (1). First we show (1) implies (6).
For this, note that for any $X$, the functor $\homcomplex(X,-) : \cha{A}_{dw} \xrightarrow{} \textnormal{Ch}(\Z)$ is exact. That is, it carries degreewise split
exact sequences to short exact sequences. Indeed, if $W \rightarrowtail Y \twoheadrightarrow Z$ is degreewise split, then for any
integers $n,k$ we see
$$0 \xrightarrow{} \Hom_{\cat{A}}(X_k,W_{n+k}) \xrightarrow{} \Hom_{\cat{A}}(X_k,Y_{n+k}) \xrightarrow{} \Hom_{\cat{A}}(X_k,Z_{n+k}) \xrightarrow{} 0$$ is a (split) short exact sequence of abelian groups. Therefore, for a fixed $n$,
the product
$$0 \xrightarrow{} \prod_{k \in \Z} \Hom_{\cat{A}}(X_{k},W_{k+n}) \xrightarrow{}
\prod_{k \in \Z} \Hom_{\cat{A}}(X_{k},Y_{k+n}) \prod_{k \in \Z}
\xrightarrow{} \Hom_{\cat{A}}(X_{k},Z_{k+n}) \xrightarrow{} 0$$ is exact. But this is degree $n$ after applying $\homcomplex(X,-)$ to $W \rightarrowtail Y \twoheadrightarrow Z$. So we have the exactness of $0 \xrightarrow{} \homcomplex(X,W) \xrightarrow{}
\homcomplex(X,Y) \xrightarrow{} \homcomplex(X,Z) \xrightarrow{}
0$. It now follows from the long exact sequence in homology that if any two out of three terms in a degreewise split exact sequence $W \rightarrowtail Y \twoheadrightarrow Z$ are in $\class{Y}$, then so is the third. In particular, $\class{Y}$ must be both syzygy and cosyzygy closed whenever $(\class{X},\class{Y})$ is a $\homcomplex$-pair.

Finally, we suppose (3) is true and we show that $(\class{X},\class{Y})$ is a $\homcomplex$-pair. For this, let $X \in \class{X}$ and $Y \in \class{Y}$. Then $\Sigma^{-n-1}Y \in \class{Y}$ too and using Lemma~\ref{lemma-homcomplex-basic-lemma} we get $H_n\homcomplex(X,Y) = \Ext^1_{dw}(X,\Sigma^{-n-1}Y) = 0$ since $(\class{X},\class{Y})$ is a cotorsion pair. So $\homcomplex(X,Y)$ is exact. This verifies the first condition in the definition of $\homcomplex$-pair. For the second condition, suppose $\homcomplex(X,Y)$ is exact for all $X \in \class{X}$. Then in particular, $H_{-1}(X,Y) = 0$.  But then $0 = H_{-1}(X,Y) = \Ext^1_{dw}(X,Y)$. Since $(\class{X},\class{Y})$ is a cotorsion pair we get $Y \in \class{Y}$. This verifies the second condition and the third condition is similar.

\end{proof}

\begin{corollary}\label{cor-classification of localizing cot pairs in Ch(R)dw}
Let $(\class{X}, \class{Y})$ be a complete cotorsion pair in $\cha{A}_{dw}$. Then $(\class{X}, \class{Y})$ is a localizing cotorsion pair if and only if it satisfies the equivalent properties in Proposition~\ref{prop-HOM pairs classification}. In particular, note that if $\class{Y}$ is a suspension closed class of complexes, and $(\class{X},\class{Y})$ and $(\class{Y},\class{Z})$ are each complete cotorsion pairs in $\cha{A}_{dw}$, then $(\class{X},\class{Y},\class{Z})$ is a localizing cotorsion triple and immediately yields a recollement as in Corollary~\ref{cor-recollements for WIC Frobenius cats}.

\end{corollary}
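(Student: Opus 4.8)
The plan is to obtain the corollary as a direct repackaging of the propositions already established in this section, so essentially no new work is required. First I would recall that, by Definition~\ref{def-localizing cot pair in frobenius}, a complete cotorsion pair $(\class{X},\class{Y})$ in the WIC Frobenius category $\cha{A}_{dw}$ is \emph{localizing} precisely when it satisfies one (equivalently all) of the conditions (1)--(5) of Proposition~\ref{prop-cot pairs in frobenius cats}. But conditions (4)--(8) of Proposition~\ref{prop-HOM pairs classification} are, verbatim, conditions (1)--(5) of Proposition~\ref{prop-cot pairs in frobenius cats}, and Proposition~\ref{prop-HOM pairs classification} asserts that for a cotorsion pair in $\cha{A}_{dw}$ all eight of its conditions are equivalent. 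Hence, for the complete cotorsion pair $(\class{X},\class{Y})$, being a localizing cotorsion pair is equivalent to satisfying any one of the eight conditions of Proposition~\ref{prop-HOM pairs classification}. This is exactly the first assertion of the corollary.

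For the ``in particular'' clause I would argue as follows. Suppose $\class{Y}$ is closed under suspensions (two-sided, i.e.\ under $\Sigma^n$ for every $n \in \Z$) and that $(\class{X},\class{Y})$ and $(\class{Y},\class{Z})$ are complete cotorsion pairs in $\cha{A}_{dw}$. The pair $(\class{X},\class{Y})$ then satisfies condition (3) of Proposition~\ref{prop-HOM pairs classification} — its right-hand class is suspension closed — so by the first part it is a localizing cotorsion pair. Likewise the pair $(\class{Y},\class{Z})$ satisfies condition (2) of Proposition~\ref{prop-HOM pairs classification} — here the left-hand class is $\class{Y}$, again suspension closed — so it too is a localizing cotorsion pair. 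By the definition of a localizing cotorsion triple given just before Corollary~\ref{cor-recollements for WIC Frobenius cats} in Section~\ref{sec-localizing cot pairs in frobenius categories}, this means $(\class{X},\class{Y},\class{Z})$ is a localizing cotorsion triple in $\cha{A}_{dw}$.

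Finally I would invoke Corollary~\ref{cor-recollements for WIC Frobenius cats}, whose standing hypothesis is exactly that we have a localizing cotorsion triple in a WIC Frobenius category; since $\cha{A}_{dw}$ is WIC Frobenius by Proposition~\ref{prop-Ch(A)_dw is Frobenius} together with the running assumption that $\cat{A}$ is weakly idempotent complete, we obtain at once the equivalences $\class{X}/\sim \,\cong\, \cha{A}/\class{Y} \,\cong\, \class{Z}/\sim$ and the recollement diagram, in both its injective and projective forms. I do not anticipate any genuine obstacle: the argument is pure bookkeeping. The one point needing a moment's care is the bookkeeping of ``suspension closed'' itself — it must be the two-sided notion matching conditions (2) and (3) of Proposition~\ref{prop-HOM pairs classification}, and not confused with the weaker positive- or negative-suspension-closed conditions, for which Lemma~\ref{lemma-suspensions are cosyzygies} yields only one-sided syzygy/cosyzygy closure and Proposition~\ref{prop-HOM pairs classification} would not apply.
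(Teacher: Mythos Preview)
Your proposal is correct and is exactly the intended argument: the paper gives no explicit proof of this corollary, treating it as immediate from Definition~\ref{def-localizing cot pair in frobenius} together with the identification of conditions (4)--(8) in Proposition~\ref{prop-HOM pairs classification} with conditions (1)--(5) of Proposition~\ref{prop-cot pairs in frobenius cats}, and then Corollary~\ref{cor-recollements for WIC Frobenius cats} for the recollement. Your unpacking of the ``in particular'' clause via conditions (2) and (3) of Proposition~\ref{prop-HOM pairs classification}, and your remark that ``suspension closed'' must mean two-sided, are both on target.
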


\section{The K-injective and K-projective model structures}\label{sec-K-injective and K-projective model structures}

Let $\cat{G}$ be a Grothendieck category. In this section we use the theory of exact model structures to construct a model structure on $\cha{G}$ whose homotopy category is the derived category $\class{D}(\cat{G})$ and whose fibrant objects are the K-injective complexes as defined by Spaltenstein in~\cite{spaltenstein}. We apply this to immediately obtain, in the affine case, the classical recollement of Verdier as an easy corollary of Theorem~\ref{them-recollements in krause form}. But rather than give a direct proof, we instead opt to prove a more general phenomenon which we will have the opportunity to apply again in Section~\ref{sec-Murfet and Neeman models and recollements}.

For now, let $R$ be a ring and recall the following definition from~\cite{spaltenstein}.

\begin{definition}\label{def-K-injective and K-projective complexes}
We say a complex $K$ is \emph{K-injective} if the complex $\homcomplex(E,K)$ is exact whenever $E$ is an exact complex. On the other hand we say $K$ is \emph{K-projective} if $\homcomplex(K,E)$ is exact whenever $E$ is exact.

\end{definition}

By Lemma~\ref{lemma-homcomplex-basic-lemma} we get that a complex $K$ is $K$-injective if and only if every chain map $f : E \xrightarrow{} K$ is null homotopic whenever $E$ is exact. It is immediate then that an exact K-injective complex is contractible. Conversely, a contractible complex is both exact and K-injective. By definition, a complex $I$ is DG-injective if and only if each $I_n$ is injective and $I$ is K-injective.

We now generalize K-injective and K-projective complexes as follows. Assume $\cat{A}$ is any WIC exact category with enough injectives. Recall from Lemma~\ref{lemma-ch(A)} that this makes $\cha{A}$ also WIC exact with enough injectives. So it is fertile ground to discuss exact model structures coming from injective cotorsion pairs. Let $(\class{U},\class{F})$ be such an injective cotorsion pair in $\cha{A}$ and assume $\class{F}$ is contained within the class of degreewise injective complexes.  Examples of such $(\class{U},\class{F})$ that the author has in mind include those obtained by taking $\class{F}$ to be one of the following classes of complexes of modules over a ring $R$: All complexes of injectives, the exact complexes of injectives, the DG-injectives, and the (exact) AC-acyclic complexes of injectives. See~\cite{bravo-gillespie-hovey} for details on these model structures on $\ch$. We will show that $(\class{U},\class{F})$ generates a Quillen equivalent localizing cotorsion pair $(\class{U},K\class{F})$ in $\cha{A}_{dw}$ where $K\class{F}$ is the class of all complexes which are isomorphic, in the homotopy category $K(\cat{A})$, to some complex in $\class{F}$. We think of these as ``K-versions'' or ``Spaltenstein versions'' of the model structure $(\class{U},\class{F})$. Indeed when $\class{F}$ is the class of DG-injective complexes then $K\class{F}$ is exactly the class of K-injective complexes. Throughout we give only injective versions of the general results but there are obvious projective versions too whenever $\cat{A}$ is WIC exact with enough projectives.

\begin{definition}\label{def-K-F-injective and K-Fprojective complexes}
Let $(\class{U},\class{F})$ be an injective cotorsion pair in $\cha{A}$ with $\class{F}$ contained within the class of degreewise injective complexes. We will call a complex $F \in \class{F}$ an \emph{$\class{F}$-injective} complex and we will say a complex $X$ is \emph{$K\class{F}$-injective} if the complex $\homcomplex(U,X)$ is exact whenever $U \in \class{U}$. We denote the class of all $K\class{F}$-injective complexes by $K\class{F}$.

\end{definition}

\begin{remark}
Since $\class{U}$ is thick and contains the class $\class{W}$ of contractible complexes it is automatic from Lemma~\ref{lemma-suspensions are cosyzygies} that $\class{U}$ is closed under suspensions. So by Lemma~\ref{lemma-homcomplex-basic-lemma} we get that a complex $X$ is $K\class{F}$-injective if and only if every chain map $f :U \xrightarrow{} X$, with $U \in \class{U}$, is null homotopic. From this it is easy to see that $\class{U} \cap K\class{F} = \class{W}$. Note that a complex $F$ is $\class{F}$-injective if and only if $F$ is $K\class{F}$-injective and each $F_n$ is injective.
\end{remark}

\begin{theorem}\label{them-generalized K-injective models}
Let $\cat{A}$ be a WIC exact category with enough injectives. Assume we have an injective cotorsion pair $(\class{U},\class{F})$ in $\cha{A}$ with $\class{F}$ contained within the class of degreewise injective complexes.
Then $(\class{U},K\class{F})$ is a localizing cotorsion pair in $\cha{A}_{dw}$. Its induced injective model structure $(\class{A}, \class{U}, K\class{F})$ we will call the \emph{$K\class{F}$-injective model structure on $\cha{A}$}. We note the following properties of the $K\class{F}$-injective model structure on $\cha{A}$.
\begin{enumerate}
\item The cofibrations are the degreewise split monos.
\item The trivial cofibrations are the degreewise split monos with cokernels in $\class{U}$.
\item The fibrations are the degreewise split epis with $K\class{F}$-injective kernel.
\item The trivial fibrations are the degreewise split epis with contractible kernel.
\item The weak equivalences are the maps which factor as a trivial cofibration followed by a trivial fibration.
\item The identity functor on $\cha{A}$ is a (right) Quillen equivalence from the $\class{F}$-injective model structure on $\cha{A}$ to the $K\class{F}$-injective model structure.
\end{enumerate}
\end{theorem}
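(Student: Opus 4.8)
The plan is to first prove the main assertion — that $(\class{U},K\class{F})$ is a localizing cotorsion pair in $\cha{A}_{dw}$ — then read properties (1)--(5) straight off Hovey's correspondence, and finally treat the Quillen equivalence (6), which is where the real work lies.

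For the main assertion I would argue in three short steps. \textbf{(i) Cotorsion pair.} By the Remark following Definition~\ref{def-K-F-injective and K-Fprojective complexes}, $\class{U}$ is thick, contains the contractibles, and hence (Lemma~\ref{lemma-suspensions are cosyzygies}) is closed under all suspensions in $\cha{A}_{dw}$. Using Lemma~\ref{lemma-homcomplex-basic-lemma}, vanishing of $\Ext^1_{dw}(U,X)$ for all $U\in\class{U}$ is, because $\Sigma^n\class{U}=\class{U}$, the same as exactness of every $\homcomplex(U,X)$, so $\rightperp{\class{U}}=K\class{F}$ in $\cha{A}_{dw}$. For the other half, each $\Sigma^m F$ is degreewise injective, so degreewise exact short exact sequences with such a term on the left are degreewise split; hence $\Ext^1_{dw}(-,\Sigma^m F)=\Ext^1_{\cha{A}}(-,\Sigma^m F)$. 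This yields $\class{F}\subseteq K\class{F}$ (since $\Ext^1_{dw}(U,\Sigma^{-n-1}F)\cong\Ext^1_{dw}(\Sigma^{n+1}U,F)=\Ext^1_{\cha{A}}(\Sigma^{n+1}U,F)=0$, using $\Sigma^{n+1}U\in\class{U}$), and it shows that $\leftperp{\class{F}}$ computed in $\cha{A}_{dw}$ coincides with the left $\Ext^1_{\cha{A}}$-orthogonal of $\class{F}$, namely $\class{U}$; as $\leftperp{K\class{F}}\subseteq\leftperp{\class{F}}=\class{U}\subseteq\leftperp{K\class{F}}$, the pair $(\class{U},K\class{F})$ is a cotorsion pair. \textbf{(ii) Completeness.} For any $X$, completeness of $(\class{U},\class{F})$ in $\cha{A}$ gives a degreewise exact $F'\rightarrowtail U\twoheadrightarrow X$ with $U\in\class{U}$, $F'\in\class{F}$; since $F'$ is degreewise injective this sequence is degreewise split, hence is a special $\class{U}$-precover for $(\class{U},K\class{F})$ in $\cha{A}_{dw}$ (its kernel $F'$ lies in $\class{F}\subseteq K\class{F}$). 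As $\cha{A}_{dw}$ is Frobenius, hence has enough projectives and injectives, Salce's argument (the Remark after Definition~\ref{def-localizing cot pair in frobenius}) makes $(\class{U},K\class{F})$ complete. \textbf{(iii) Localizing.} This complete cotorsion pair is a $\homcomplex$-pair by the very definition of $K\class{F}$, so Corollary~\ref{cor-classification of localizing cot pairs in Ch(R)dw} (or Proposition~\ref{prop-cot pairs in frobenius cats}, $\class{U}$ being thick) shows it is localizing, and Corollary~\ref{cor-two model strucs from a localizing cot pair} produces the induced injective model structure, which is the Hovey triple $(\class{A},\class{U},K\class{F})$.

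Now (1)--(5) fall out of Theorem~\ref{them-Hovey's theorem for WIC-exact categories} applied to $(\class{A},\class{U},K\class{F})$, using that the admissible monomorphisms (resp. epimorphisms) in $\cha{A}_{dw}$ are exactly the degreewise split ones: cofibrations are the degreewise split monos (cokernel in $\class{Q}=\class{A}$), trivial cofibrations those with cokernel in $\class{Q}\cap\class{W}=\class{U}$, fibrations the degreewise split epis with kernel in $\class{R}=K\class{F}$, trivial fibrations those with kernel in $\class{R}\cap\class{W}=K\class{F}\cap\class{U}=\class{W}$; and (5) is the description of weak equivalences from that theorem (cf. Lemma~\ref{lemma-characterization of weak equivalences in exact categories}).

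For (6), the $\class{F}$-injective model structure on $\cha{A}$ is the Hovey triple $(\class{A},\class{U},\class{F})$. I would first check the identity is a Quillen adjunction: the identity $\cha{A}\to\cha{A}_{dw}$ is right Quillen because a fibration of $\cha{A}$ is a degreewise exact epimorphism with kernel in $\class{F}$, hence with degreewise injective — so automatically degreewise split — kernel lying in $\class{F}\subseteq K\class{F}$, and similarly for trivial fibrations (equivalently, the reverse identity sends (trivial) cofibrations to (trivial) cofibrations, as degreewise split monos are degreewise exact monos). To upgrade this to a Quillen equivalence, note that all objects are cofibrant in both structures and, by Proposition~\ref{prop-left and right homotopic maps in exact model structures}(5) (the contractibles lie in $\class{U}$, and any chain map from an object of $\class{U}$ into an object of $K\class{F}$, hence of $\class{F}$, is null homotopic), the homotopy relation on cofibrant--fibrant objects is chain homotopy in both cases; so the two homotopy categories are $\class{F}/\!\sim$ and $K\class{F}/\!\sim$, and the total right derived functor of the identity is the fully faithful inclusion $\class{F}/\!\sim\ \hookrightarrow\ K\class{F}/\!\sim$. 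The remaining point — which I expect to be the main obstacle — is that this inclusion is essentially surjective. Given $H\in K\class{F}$, take a special $\class{F}$-preenvelope $H\rightarrowtail G\twoheadrightarrow U$ in $\cha{A}$ with $G\in\class{F}$, $U\in\class{U}$. This is a trivial cofibration, hence a weak equivalence, in $(\cha{A},\class{F}\text{-inj})$, so the image of its mapping cone $C$ in the triangulated homotopy category of $(\cha{A},\class{F}\text{-inj})$ is $0$, which forces $C\in\class{U}$. On the other hand $C$ sits in a degreewise split sequence $G\rightarrowtail C\twoheadrightarrow\Sigma H$ with $G,\Sigma H\in K\class{F}$, so $C\in K\class{F}$ by extension-closure of the right half of $(\class{U},K\class{F})$. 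Hence $C\in\class{U}\cap K\class{F}=\class{W}$ is contractible, $H\rightarrowtail G$ is a chain homotopy equivalence, and $[H]=[G]$ lies in the image. The useful device is to extract $C\in\class{U}$ from the triangulated structure of the $\class{F}$-injective homotopy category rather than verifying it by hand, and then to play it off against the extension- and suspension-closure of $K\class{F}$ from step (i).
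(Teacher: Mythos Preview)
Your argument is correct and matches the paper for steps (i)--(iii) and (1)--(5). The difference is in (6). The paper establishes the Quillen equivalence by showing directly that the two model structures share the same weak equivalences: given $f$ a weak equivalence in the $\class{F}$-injective model, factor it in the $K\class{F}$-injective model as a trivial cofibration followed by a fibration $p$; then $p$ is an admissible epimorphism which, by two-out-of-three, is a weak equivalence in the $\class{F}$-injective model, so $\ker p\in\class{U}\cap K\class{F}=\class{W}$ and $p$ is a trivial fibration (the other direction being immediate from $\class{W}\subseteq\class{U}$). Your route instead identifies the total right derived functor with the inclusion $\class{F}/\!\sim\,\hookrightarrow K\class{F}/\!\sim$ and proves essential surjectivity via the mapping cone argument: for $H\in K\class{F}$ with $\class{F}$-preenvelope $H\rightarrowtail G$, the cone $C$ lies in $\class{U}$ (by triviality in $\Ho$) and in $K\class{F}$ (by extension closure), hence is contractible. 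This is a genuinely different packaging: your argument directly yields that every $K\class{F}$-injective is chain homotopy equivalent to an $\class{F}$-injective, which the paper isolates and proves separately as Proposition~\ref{prop-characterization of KF-injective complexes} using Schanuel's lemma instead. The paper's approach is slightly cleaner in that it avoids appealing to the triangulated structure on $\Ho$ to conclude $C\in\class{U}$ (one could also argue this step of yours more elementarily via the degreewise short exact sequence $\text{cone}(1_H)\rightarrowtail C\twoheadrightarrow U$ and thickness of $\class{U}$), while your approach has the virtue of folding the characterization of $K\class{F}$ into the proof of the Quillen equivalence rather than treating it as a follow-up.
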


\begin{proof}
First, we have that $\class{U} = \leftperp{\class{F}}$ and note that this leftperp is the same whether it is taken in either $\cha{A}$ or $\cha{A}_{dw}$ because we are assuming $\class{F}$ consists of complexes which are injective in each degree. Next the rightperp $\rightperp{\class{U}}$ taken in $\cha{A}_{dw}$ coincides with $K\class{F}$ by its definition along with Lemma~\ref{lemma-homcomplex-basic-lemma} and the fact that the classes are closed under suspensions. So $(\class{U},K\class{F})$ is a cotorsion pair in $\cha{A}_{dw}$.
Now we prove completeness. Let $X$ be any chain complex. Since $(\class{U},\class{F})$ is a complete cotorsion pair in $\cha{A}$ we can find an admissible short exact sequence $F \rightarrowtail U \twoheadrightarrow X$ where $U \in \class{U}$ and $F$ is $\class{F}$-injective.  But since $F$ is a complex of injectives this sequence is automatically an admissible short exact sequence in $\cha{A}_{dw}$. Since $F$ is also $K\class{F}$-injective we have shown that $(\class{U}, K\class{I})$ has enough projectives in $\cha{A}_{dw}$. Completeness now follows from the Remark following Definition~\ref{def-localizing cot pair in frobenius}. This completes the proof that $(\class{U},K\class{I})$ is a localizing cotorsion pair in $\cha{A}_{dw}$.

Now properties (1)--(5) are now immediate from the definition of an exact model structure. We show (6). First, we claim the identity functor from the $\class{F}$-injective model structure to the $K\class{F}$-injective model structure is a right Quillen functor. A fibration in the $\class{F}$-injective structure is a (necessarily) degreewise split epimorphism with $\class{F}$-injective kernel. Since $\class{F}$-injective complexes are $K\class{F}$-injective we conclude the identity preserves fibrations. Similarly, a trivial fibration in the $\class{F}$-injective model structures is a (necessarily) degreewise split epimorphism with injective kernel. Since injective complexes are contractible we conclude the identity preserves trivial fibrations. So the identity is a right Quillen functor. By the definition of a Quillen equivalence we will be done if we can show that a map $f$ is a weak equivalence in the $\class{F}$-injective model if and only if it is a weak equivalence in the $K\class{F}$-injective model. First suppose that $f$ is a weak equivalence in the $K\class{F}$-injective model structure. Then by definition, it factors as $f = pi$ where $i$ is a degreewise split monomorphism with cokernel in $\class{U}$ and $p$ is a degreewise split epimorphism with contractible kernel. But each contractible complex is in $\class{U}$. So it follows from Lemma~\ref{lemma-characterization of weak equivalences in exact categories} that $f$ is a weak equivalence in the $\class{F}$-injective model structure. On the other hand, say $f$ is a weak equivalence in the $\class{F}$-injective model. Using the factorization axiom in the $K\class{F}$-injective model structure first factor it as $f = pi$ where $i$ is a trivial cofibration and $p$ is a fibration. So $i$ is a degreewise split monomorphism with cokernel in $\class{U}$ and $p$ a degreewise split monomorphism with $K\class{F}$-injective kernel. But then $i$ is also a trivial cofibration in the $\class{F}$-injective model structure. So by the two out of three axiom, $p$ must also be a weak equivalence in the $\class{F}$-injective model structure. This means that $\ker{p}$ is injective and it follows that $p$ and therefore $f$ are weak equivalences in the $K\class{F}$-injective model structure. This completes the proof of (6).

\end{proof}

The following gives a more detailed characterization of the $K\class{F}$-injective complexes, essentially saying that they make up the isomorphic closure of the class of $\class{F}$-injectives in the homotopy category $K(\cat{A})$.
Recall that $X$ and $Y$ are isomorphic in $K(\cat{A})$ if they are chain homotopy equivalent which means that there are chain maps $f : X \xrightarrow{} Y$ and $g : Y \xrightarrow{} X$ such that $gf \sim 1_X$ and $fg \sim 1_Y$.

\begin{proposition}\label{prop-characterization of KF-injective complexes}
Continuing with Theorem~\ref{them-generalized K-injective models}, let $X$ be a chain complex. Then the following are equivalent.
\begin{enumerate}
\item $X$ is $K\class{F}$-injective.
\item There exists an $\class{F}$-injective complex $F$ and contractible complexes $W_1$ and $W_2$ such that $X \oplus W_2 \cong F \oplus W_1$.
\item $X$ is chain homotopy equivalent to some $\class{F}$-injective complex $F$.
\end{enumerate}
\end{proposition}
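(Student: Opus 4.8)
The plan is to prove the cycle $(1)\Rightarrow(3)\Rightarrow(2)\Rightarrow(1)$. The implication $(2)\Rightarrow(1)$ is immediate: suppose $X\oplus W_2\cong F\oplus W_1$ with $F$ an $\class{F}$-injective complex and $W_1,W_2$ contractible. An $\class{F}$-injective complex is in particular $K\class{F}$-injective, so $F\in K\class{F}$; and every contractible complex lies in $K\class{F}$ (since $\class{U}\cap K\class{F}$ is the class of contractible complexes, by the Remark following Definition~\ref{def-K-F-injective and K-Fprojective complexes}), so $W_1\in K\class{F}$. As the right-hand class of the cotorsion pair $(\class{U},K\class{F})$ in $\cha{A}_{dw}$, the class $K\class{F}$ is closed under finite direct sums and direct summands; hence $F\oplus W_1\in K\class{F}$, then $X\oplus W_2\in K\class{F}$, and finally $X\in K\class{F}$.

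For $(3)\Rightarrow(2)$ I would use Schanuel's Lemma in the Frobenius category $\cha{A}_{dw}$. Let $f\colon X\to F$ be a chain homotopy equivalence with $F$ an $\class{F}$-injective complex. Then the mapping cone $\mathrm{cone}(f)$ is contractible (the cone of a chain homotopy equivalence is null-homotopic; equivalently, $f$ is an isomorphism in $\cha{A}_{dw}/\hspace{-.04in}\sim$ so its cone vanishes there), hence is projective-injective in $\cha{A}_{dw}$ by Proposition~\ref{prop-Ch(A)_dw is Frobenius}. Now consider the two degreewise split short exact sequences $F\rightarrowtail\mathrm{cone}(f)\twoheadrightarrow\Sigma X$ and $F\rightarrowtail\bigoplus_{n\in\Z}D^{n+1}(F_n)\twoheadrightarrow\Sigma F$, the latter being the sequence used in the proof of Lemma~\ref{lemma-suspensions are cosyzygies}; both middle terms are contractible, hence injective in $\cha{A}_{dw}$. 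Schanuel's Lemma~\ref{lemma-schanuel's lemma}, applied in $\cha{A}_{dw}$, then gives $\mathrm{cone}(f)\oplus\Sigma F\cong\bigl(\bigoplus_{n\in\Z}D^{n+1}(F_n)\bigr)\oplus\Sigma X$. Applying the automorphism $\Sigma^{-1}$ of $\cha{A}_{dw}$, and noting that shifts of contractible complexes are contractible, yields $X\oplus W_2\cong F\oplus W_1$ with $W_1=\Sigma^{-1}\mathrm{cone}(f)$ and $W_2=\Sigma^{-1}\bigl(\bigoplus_{n\in\Z}D^{n+1}(F_n)\bigr)$ both contractible; this is $(2)$.

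The heart of the argument is $(1)\Rightarrow(3)$, where the plan is to build a fibrant replacement of $X$ and apply Whitehead's theorem. Assume $X$ is $K\class{F}$-injective. Since $(\class{U},\class{F})$ is an injective, hence complete, cotorsion pair in $\cha{A}$, ``enough injectives'' provides an admissible monomorphism $i\colon X\rightarrowtail F$ in $\cha{A}$ with cokernel in $\class{U}$ and with $F\in\class{F}$ (so $F$ is an $\class{F}$-injective complex). In the $\class{F}$-injective model structure on $\cha{A}$ --- in which every complex is cofibrant, $\class{U}$ is the class of trivial objects, and $\class{F}$ the class of fibrant objects --- the map $i$ is thus a trivial cofibration, in particular a weak equivalence. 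By (the proof of) Theorem~\ref{them-generalized K-injective models}(6), a map is a weak equivalence in the $\class{F}$-injective model structure if and only if it is one in the $K\class{F}$-injective model structure on $\cha{A}_{dw}$, so $i$ is a weak equivalence there as well. In the $K\class{F}$-injective model structure every complex is cofibrant, $X$ is fibrant by hypothesis, and $F$ is fibrant because $\class{F}\subseteq K\class{F}$; thus $i$ is a weak equivalence between cofibrant--fibrant objects, so by the Fundamental Theorem of Model Categories (valid for exact model structures; see Section~4 of~\cite{gillespie-exact model structures}) it admits a homotopy inverse $g\colon F\to X$. Finally, by Proposition~\ref{prop-left and right homotopic maps in exact model structures}(5), the homotopy relation between these cofibrant--fibrant objects is ``the difference factors through an object of $\class{U}\cap K\class{F}$'', that is, through a contractible complex --- which is exactly the chain homotopy relation (Subsection~\ref{subsec-chain complexes form a Frobenius cat}). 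Hence $gi$ and $ig$ are chain homotopic to the identities, $X$ is chain homotopy equivalent to the $\class{F}$-injective complex $F$, and $(3)$ holds. The step demanding the most care is this passage from a weak equivalence in the model structure on $\cha{A}$ to one in the model structure on $\cha{A}_{dw}$: the replacement $i$ is only an admissible monomorphism in $\cha{A}$, not a degreewise split one, so a priori it is not adapted to the $\cha{A}_{dw}$-structure. This is precisely the gap bridged by part (6) of Theorem~\ref{them-generalized K-injective models}; with that in hand the remainder is routine bookkeeping with the definitions of cofibrant and fibrant objects and with Proposition~\ref{prop-left and right homotopic maps in exact model structures}(5).
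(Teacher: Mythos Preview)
Your proof is correct, but your route differs from the paper's in the key implication. The paper proves $(2)\Leftrightarrow(3)$ by citing the general Frobenius-category fact that objects are stably isomorphic iff they become isomorphic after adding projective-injective summands, and then proves $(1)\Rightarrow(2)$ by a direct elementary construction: embed $X$ in a contractible $W_1$, take a special $\class{U}$-precover $F\rightarrowtail W_2\twoheadrightarrow\cok$ of the quotient, observe that $W_2$ is forced to lie in $\class{U}\cap K\class{F}$ and hence is contractible, and apply Schanuel via a pullback square. Your argument instead leverages the model-category machinery already established in Theorem~\ref{them-generalized K-injective models}: for $(1)\Rightarrow(3)$ you build an $\class{F}$-preenvelope $i:X\rightarrowtail F$, transport the weak equivalence across the Quillen equivalence of part~(6), and invoke Whitehead together with Proposition~\ref{prop-left and right homotopic maps in exact model structures}(5) to identify the formal homotopy relation with chain homotopy. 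Your $(3)\Rightarrow(2)$ via Schanuel on the cone is an explicit version of the cited Frobenius fact. The paper's approach is more self-contained and avoids appealing back to the Quillen equivalence; yours is shorter once Theorem~\ref{them-generalized K-injective models} is in hand and illustrates nicely that Proposition~\ref{prop-characterization of KF-injective complexes} is really a formal consequence of it.
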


\begin{proof}
First we have that (2) and (3) are equivalent for formal reasons concerning Frobenius categories. Indeed in any WIC Frobenius category $\cat{A}$, two objects $A,B \in \class{A}$ are isomorphic in $\class{A}/\sim$ if and only if there are projective-injective objects $W_1,W_2 \in \class{A}$ and an $\class{A}$-isomorphism $A \oplus W_1 \cong B \oplus W_2$. (See~\cite[Lemma~3.1]{saorin-stovicek} or~\cite[Lemma~1.4.3~(1)]{becker}.) So we get the result from this, along with Proposition~\ref{prop-Ch(A)_dw is Frobenius}.

Now suppose (1), so $X$ is $K\class{F}$-injective and write a degreewise split short exact sequence $X \rightarrowtail W_1 \twoheadrightarrow K$ where $W_1$ is contractible. Since $K\class{F}$ is thick in $\cha{A}_{dw}$ we see $K$ must also be $K\class{F}$-injective. Now by hypothesis we may use that $(\class{U}, \class{F})$ is a complete cotorsion pair in $\cha{A}$ to get an admissible short exact sequence $F \rightarrowtail W_2 \twoheadrightarrow K$ with $F \in \class{F}$ and $W_2 \in \class{U}$. Note that the sequence must be degreewise split since each $F_n$ is injective. Since $W_2$ is a degreewise split extension of two $K\class{F}$-injective complexes we get that $W_2 \in \class{U} \cap K\class{F}$, and so is contractible. Now (2) will follow from the dual of Schanuel's Lemma~\ref{lemma-schanuel's lemma}. That is, construct the pullback diagram below in $\cha{A}_{dw}$ and note that we must have $X \oplus W_2 \cong P \cong F \oplus W_1$.
$$\begin{tikzcd}
X \arrow[tail]{r} \arrow[equal]{d} & W_1 \arrow[two heads]{r}  & K  \\
X \arrow[tail]{r} & P \arrow[two heads]{r} \arrow[two heads]{u} & W_2 \arrow[two heads]{u} \\
& F \arrow[equal]{r} \arrow[tail]{u} & F \arrow[tail]{u}
\end{tikzcd}$$
The converse (2) implies (1) is true since $K\class{F}$ is thick in $\cha{A}_{dw}$ and contains both the $\class{F}$-injective complexes and the contractible complexes.
\end{proof}

\subsection{The K-injective and K-projective model structures}
Let $\cat{G}$ be any Grothendieck category. $\cat{G}$ is an exact category with its abelian structure. This is the setting for the remainder of this section and we now set the following notation which we also keep for the remainder of the section. We let $\class{W}$ denote the class of all contractible complexes, $\class{E}$ denote the class of all exact complexes, $K\class{I}$ the class of all K-injective complexes, $K\class{P}$ the class of all K-projective complexes, and $\class{A}$ the class of all complexes. The following corollary is immediate from Theorem~\ref{them-generalized K-injective models} and Proposition~\ref{prop-characterization of KF-injective complexes} using the known fact that the DG-injective cotorsion pair $(\class{E},\dgclass{I})$ is an injective cotorsion pair.  See for example, \cite[Theorem~4.20]{saorin-stovicek} or~\cite[Example~5.1]{gillespie-degreewise-model-strucs}.

\begin{corollary}\label{cor-K-injective model for derived category}
$(\class{E},K\class{I})$ is a localizing cotorsion pair in $\cha{G}_{dw}$. Its induced injective model structure $(\class{A}, \class{E}, K\class{I})$ on $\cha{G}_{dw}$ we call the \emph{K-injective model structure on $\cha{G}$}. We note the following properties of the K-injective model structure on $\cha{G}$.
\begin{enumerate}
\item The cofibrations are the degreewise split monos.
\item The trivial cofibrations are the degreewise split monos with exact cokernels.
\item The fibrations are the degreewise split epis with K-injective kernel.
\item The trivial fibrations are the degreewise split epis with contractible kernel.
\item The weak equivalences are the homology isomorphisms.
\item The identity functor on $\cha{G}$ is a (right) Quillen equivalence from the usual injective model structure on $\cha{G}$ for the derived category $\cat{D}(\cat{G})$ to the K-injective model structure.
\end{enumerate}
The following statements are equivalent and characterize the fibrant complexes $X$.
\begin{itemize}
\item $X$ is K-injective.
\item There exists a DG-injective complex $I$ and contractible complexes $W_1$ and $W_2$ such that $X \oplus W_1 \cong I \oplus W_2$ in $\cha{G}$.
\item $X$ is chain homotopy equivalent to some DG-injective complex $I$.
\end{itemize}
\end{corollary}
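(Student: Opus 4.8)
The plan is to feed the DG-injective cotorsion pair into Theorem~\ref{them-generalized K-injective models}. Concretely, I would take $\cat{A} = \cat{G}$ (a Grothendieck category is abelian, hence weakly idempotent complete, and has enough injectives) and take the injective cotorsion pair $(\class{U},\class{F})$ of that theorem to be the DG-injective cotorsion pair $(\class{E},\dgclass{I})$ in $\cha{G}$. By the references cited just above the statement this is indeed an injective cotorsion pair, and every DG-injective complex is degreewise injective by definition, so the standing hypothesis that $\class{F}$ be contained in the degreewise injective complexes is met. Theorem~\ref{them-generalized K-injective models} then yields a localizing cotorsion pair in $\cha{G}_{dw}$ whose right-hand class consists of the complexes $X$ with $\homcomplex(U,X)$ exact for all $U \in \class{E}$; since $\class{E}$ is precisely the class of exact complexes, Definition~\ref{def-K-injective and K-projective complexes} identifies this class with the class $K\class{I}$ of K-injective complexes. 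Hence $(\class{E},K\class{I})$ is a localizing cotorsion pair in $\cha{G}_{dw}$, and by Corollary~\ref{cor-two model strucs from a localizing cot pair} its induced injective model structure is the Hovey triple $(\class{A},\class{E},K\class{I})$, as claimed.

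Properties (1)--(4) are then read off from the corresponding parts of Theorem~\ref{them-generalized K-injective models} under the identification $\class{U} = \class{E}$ and ``$K\class{F}$-injective'' $=$ ``K-injective'': trivial-cofibration cokernels lie in $\class{U} = \class{E}$, i.e.\ are exact complexes, and trivial-fibration kernels are contractible. Property (6) is part (6) of the theorem together with the cited fact that the $\dgclass{I}$-injective model structure on $\cha{G}$ -- that is, the Hovey triple $(\class{A},\class{E},\dgclass{I})$ -- is the usual injective model structure for $\cat{D}(\cat{G})$. The fibrant objects of $(\class{A},\class{E},K\class{I})$ are, by definition of a Hovey triple, the members of the right-hand class $K\class{I}$, the K-injective complexes, and the three bulleted conditions are exactly conditions (1)--(3) of Proposition~\ref{prop-characterization of KF-injective complexes} specialized to $\class{F} = \dgclass{I}$, so that the $\class{F}$-injective complexes there are the DG-injective complexes.

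The only part requiring a genuine (if short) argument is property (5), that the weak equivalences coincide with the homology isomorphisms. By Lemma~\ref{lemma-characterization of weak equivalences in exact categories}, a chain map is a weak equivalence of $(\class{A},\class{E},K\class{I})$ if and only if it factors in $\cha{G}_{dw}$ as a degreewise split monomorphism with exact cokernel followed by a degreewise split epimorphism with exact kernel; each such map is a homology isomorphism by the long exact sequence in homology, so every weak equivalence is a homology isomorphism. For the converse I would use the mapping cylinder: an arbitrary $f\colon X \to Y$ factors as a degreewise split monomorphism $X \rightarrowtail \mathrm{Cyl}(f)$ with cokernel the mapping cone of $f$, followed by a degreewise split epimorphism $\mathrm{Cyl}(f) \twoheadrightarrow Y$ with kernel the contractible cone on $1_X$; since the cone of $f$ is exact precisely when $f$ is a homology isomorphism, every homology isomorphism factors as a trivial cofibration followed by a trivial fibration and is a weak equivalence. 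Alternatively, one may bypass the cylinder entirely: the proof of part (6) of Theorem~\ref{them-generalized K-injective models} already shows the weak equivalences of the $K\class{F}$-injective structure agree with those of the $\class{F}$-injective structure, which for $\class{F} = \dgclass{I}$ are the homology isomorphisms by the cited description of that model structure. I do not anticipate any real obstacle; the corollary is genuinely immediate from the two quoted results once the DG-injective cotorsion pair is recognized as the correct input.
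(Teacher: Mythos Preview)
Your proposal is correct and follows exactly the paper's approach: the paper simply states that the corollary is immediate from Theorem~\ref{them-generalized K-injective models} and Proposition~\ref{prop-characterization of KF-injective complexes} applied to the DG-injective cotorsion pair $(\class{E},\dgclass{I})$, and you have spelled out precisely that application. Your explicit treatment of property~(5) via the mapping cylinder (or, equivalently, via the coincidence of weak equivalences established inside the proof of part~(6) of Theorem~\ref{them-generalized K-injective models}) fills in the one detail the paper leaves implicit, and both arguments you give are valid.
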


For the dual we need to assume that $\cat{G}$ has a projective generator. This forces $\cat{G}$ to have enough projectives. In this case, it follows that the dual DG-projective cotorsion pair $(\dgclass{P},\class{E})$ is a projective cotorsion pair. In particular, Proposition~3.8 of~\cite{gillespie-quasi-coherent} can be used to show that it is complete.

\begin{corollary}\label{cor-K-projective model for derived category}
Assume $\cat{G}$ has a projective generator. Then
$(K\class{P},\class{E})$ is a localizing cotorsion pair in $\cha{G}_{dw}$. Its induced projective model structure $(K\class{P}, \class{E}, \class{A})$ on $\cha{G}_{dw}$ we call the \emph{K-projective model structure on $\cha{G}$}. We note the following properties of the K-projective model structure on $\cha{G}$.
\begin{enumerate}
\item The fibrations are the degreewise split epis.
\item The trivial fibrations are the degreewise split epis with exact kernels.
\item The cofibrations are the degreewise split monos with K-projective cokernel.
\item The trivial cofs are the degreewise split monos with contractible cokernel.
\item The weak equivalences are the homology isomorphisms.
\item The identity functor on $\cha{G}$ is a (left) Quillen equivalence from the usual projective model structure on $\cha{G}$ for $\cat{D}(\cat{G})$ to the K-projective model structure.
\end{enumerate}
The following are equivalent and characterize the cofibrant complexes $X$.
\begin{itemize}
\item $X$ is K-projective.
\item There exists a DG-projective complex $P$ and contractible complexes $W_1$ and $W_2$ such that $X \oplus W_1 \cong P \oplus W_2$ in $\cha{G}$.
\item $X$ is chain homotopy equivalent to some DG-projective complex $P$.
\end{itemize}
\end{corollary}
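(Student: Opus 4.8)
The plan is to recognize this as the projective dual of Corollary~\ref{cor-K-injective model for derived category} and to obtain it by feeding the DG-projective cotorsion pair $(\dgclass{P},\class{E})$ of $\cha{G}$ into the projective dual of Theorem~\ref{them-generalized K-injective models} (and, for the last three bulleted statements, into the projective dual of Proposition~\ref{prop-characterization of KF-injective complexes}). First I would check the hypotheses of that dual theorem. Since $\cat{G}$ has a projective generator it has enough projectives, so $\cha{G}$ has enough projectives by Corollary~\ref{cor-Ch(A) enough projectives}; completeness of $(\dgclass{P},\class{E})$ is the cited Proposition~3.8 of~\cite{gillespie-quasi-coherent}, and since $\class{E}$ is thick and contains the projective complexes of $\cha{G}$, the dual of the characterization of injective cotorsion pairs (Proposition~\ref{prop-characterizations of injective cotorsion pairs}) makes $(\dgclass{P},\class{E})$ a projective cotorsion pair. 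Finally, every DG-projective complex has projective components, so $\dgclass{P}$ lies inside the class of degreewise projective complexes --- exactly the standing hypothesis needed.

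Next I would unwind what the dual theorem delivers. It produces a localizing cotorsion pair $(K\class{P},\class{E})$ in $\cha{G}_{dw}$, where (by the dual of Definition~\ref{def-K-F-injective and K-Fprojective complexes}) $K\class{P}$ is the class of complexes $X$ with $\homcomplex(X,E)$ exact for all $E\in\class{E}$; by Lemma~\ref{lemma-homcomplex-basic-lemma} and suspension-closedness this is precisely the class of K-projective complexes in the sense of Definition~\ref{def-K-injective and K-projective complexes}. The induced projective model structure is the Hovey triple $(K\class{P},\class{E},\class{A})$, and I would then read off properties (1)--(4) directly from the Hovey triple: the admissible monomorphisms (resp. epimorphisms) of $\cha{G}_{dw}$ are the degreewise split ones, the fibrant class is $\class{A}$ and $\class{R}\cap\class{W}=\class{A}\cap\class{E}=\class{E}$, the cofibrant class is $K\class{P}$, and $\class{Q}\cap\class{W}=K\class{P}\cap\class{E}$ equals the class of projective-injectives of $\cha{G}_{dw}$, namely the contractible complexes (Definition~\ref{def-localizing cot pair in frobenius} together with Proposition~\ref{prop-Ch(A)_dw is Frobenius}).

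For (5) and (6) I would invoke the dual of part (6) of Theorem~\ref{them-generalized K-injective models}. The standard projective model structure on $\cha{G}$ for $\cat{D}(\cat{G})$ is, under Hovey's correspondence, the Hovey triple $(\dgclass{P},\class{E},\class{A})$ taken with the abelian (degreewise exact) structure, and its weak equivalences are exactly the homology isomorphisms. Running the dual of the two-out-of-three argument in the proof of Theorem~\ref{them-generalized K-injective models}(6) shows that the identity functor is a left Quillen functor to the K-projective model structure and that a chain map is a weak equivalence in the one model structure if and only if it is in the other; hence the weak equivalences of the K-projective model structure are the homology isomorphisms, giving (5), and the identity is a Quillen equivalence, giving (6). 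The three equivalent descriptions of the cofibrant ($=$ K-projective) complexes are then the verbatim projective dual of Proposition~\ref{prop-characterization of KF-injective complexes}, whose proof used only self-dual formal facts about WIC Frobenius categories together with Proposition~\ref{prop-Ch(A)_dw is Frobenius}. The one step that is not purely bookkeeping --- and hence the main obstacle --- is the identification of the abstractly defined weak equivalences of the K-projective model structure with the homology isomorphisms; everything hinges there on the Quillen equivalence in (6) and on already knowing the description of weak equivalences in the usual projective model for $\cat{D}(\cat{G})$.
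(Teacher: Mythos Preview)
Your proposal is correct and follows exactly the route the paper intends: the corollary is stated without proof as the projective dual of Corollary~\ref{cor-K-injective model for derived category}, to be obtained by feeding the DG-projective cotorsion pair $(\dgclass{P},\class{E})$ into the duals of Theorem~\ref{them-generalized K-injective models} and Proposition~\ref{prop-characterization of KF-injective complexes}, with completeness supplied by Proposition~3.8 of~\cite{gillespie-quasi-coherent}. Your explicit handling of (5) via the Quillen equivalence in (6) is precisely the argument implicit in the paper's passage from Theorem~\ref{them-generalized K-injective models}(5) to Corollary~\ref{cor-K-injective model for derived category}(5).
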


\subsection{The Verdier recollement via model categories}
As in Corollary~\ref{cor-K-projective model for derived category}, assume the Grothendieck category $\cat{G}$ has a projective generator.
Having constructed the K-injective and K-projective model structures on $\cha{G}$ using cotorsion pairs, we immediately obtain as a corollary the recollement of Verdier. This reflects the usual passage from $K(\cat{G})$ to $\class{D}(\cat{G})$ by taking the Verdier quotient with respect to the thick class of exact complexes in $K(\cat{G})$. But it is interesting to see this phenomenon come from model structures using just cotorsion pairs, and no prior construction of $K(\cat{G})$ or $\class{D}(\cat{G})$. Recall that we are still using the notation: $\class{W}$ = contractible complexes, $\class{E}$ = exact complexes, $K\class{I}$ = K-injective complexes, $K\class{P}$ = K-projective complexes, $\class{A}$ = class of all complexes.

\begin{corollary}\label{cor-Verdier recollement via cotorsion pairs}
Let $\cat{G}$ be a Grothendieck category with a projective generator. Then $(K\class{P}, \class{E}, K\class{I})$ is a localizing cotorsion triple in $\cha{G}_{dw}$. Corollary~\ref{cor-recollements for WIC Frobenius cats} tells us
we have equivalences of triangulated categories $$K\class{P}/\sim \ \cong \ \class{D}(\cat{G}) \ \cong \ K\class{I}/\sim $$
and that we have a recollement as shown.
\[
\begin{tikzpicture}[node distance=3.5cm]
\node (A) {$\mathcal{E}/\sim$};
\node (B) [right of=A] {$K(\cat{G})$};
\node (C) [right of=B] {$\class{D}(\cat{G})$};
\draw[<-,bend left=40] (A.20) to node[above]{\small E$(K\class{P},\class{E})$} (B.160);
\draw[->] (A) to node[above]{\small $I$} (B);
\draw[<-,bend right=40] (A.340) to node [below]{\small C$(\class{E},K\class{I})$} (B.200);
\draw[<-,bend left] (B.20) to node[above]{\small $\lambda = \text{C}(K\class{P},\class{E})$} (C.160);
\draw[->] (B) to node[above]{\small Q} (C);
\draw[<-,bend right] (B.340) to node [below]{\small $\rho = \text{E}(\class{E},K\class{I})$} (C.200);
\end{tikzpicture}
\]

\end{corollary}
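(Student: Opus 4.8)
The plan is to assemble the statement from the two model structures already built --- the $K$-injective model structure of Corollary~\ref{cor-K-injective model for derived category} and the $K$-projective model structure of Corollary~\ref{cor-K-projective model for derived category} --- and then feed the resulting pair of cotorsion pairs into the Frobenius recollement machinery of Section~\ref{sec-localizing cot pairs in frobenius categories}. Since $\cat{G}$ has a projective generator it has enough projectives, so both of those corollaries apply and tell us that $(K\class{P},\class{E})$ and $(\class{E},K\class{I})$ are each localizing, hence complete, cotorsion pairs in the WIC Frobenius category $\cha{G}_{dw}$, sharing the middle class $\class{E}$.

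The only hypothesis not yet on record that we need is that $\class{E}$ is closed under suspensions, and this is immediate: shifting indices and negating differentials does not affect acyclicity, so $\Sigma^n X$ is exact whenever $X$ is. Hence Corollary~\ref{cor-classification of localizing cot pairs in Ch(R)dw} applies verbatim with $(\class{X},\class{Y},\class{Z}) = (K\class{P},\class{E},K\class{I})$, giving at once that $(K\class{P},\class{E},K\class{I})$ is a localizing cotorsion triple in $\cha{G}_{dw}$ and that it yields the recollement described in Corollary~\ref{cor-recollements for WIC Frobenius cats}.

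It then remains only to translate Corollary~\ref{cor-recollements for WIC Frobenius cats} into the concrete categories at hand. Taking the ambient WIC Frobenius category there to be $\cha{G}_{dw}$, its stable category is exactly the homotopy category $K(\cat{G})$, since for the degreewise split exact structure two chain maps are identified in the stable category precisely when their difference factors through a contractible complex, i.e.\ when they are chain homotopic (Subsection~\ref{subsec-chain complexes form a Frobenius cat}). The localization $\cha{G}_{dw}/\class{E}$ is the localization of $K(\cat{G})$ at the weak equivalences of the model structure $(\class{A},\class{E},K\class{I})$, and by Corollary~\ref{cor-K-injective model for derived category}(5) these are precisely the homology isomorphisms; thus $\cha{G}_{dw}/\class{E} = \class{D}(\cat{G})$. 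Substituting $\class{X}=K\class{P}$, $\class{Y}=\class{E}$, $\class{Z}=K\class{I}$ into Corollary~\ref{cor-recollements for WIC Frobenius cats} now produces the triangulated equivalences $K\class{P}/\sim\, \cong\, \class{D}(\cat{G})\, \cong\, K\class{I}/\sim$ together with the displayed recollement, with the functors labelled exactly as stated, namely $\textnormal{E}(K\class{P},\class{E})$, $\textnormal{C}(\class{E},K\class{I})$, the inclusion $I$, $\lambda = \textnormal{C}(K\class{P},\class{E})$, the quotient functor $Q$ of Lemma~\ref{lemma-quotient map}, and $\rho = \textnormal{E}(\class{E},K\class{I})$; the projective part of that corollary yields the same diagram. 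I expect no genuine obstacle here: the substantive input --- completeness of the two cotorsion pairs and the identification of their cofibrant and fibrant objects with the $K$-projective and $K$-injective complexes --- was already carried out in Corollaries~\ref{cor-K-injective model for derived category} and~\ref{cor-K-projective model for derived category}, so the present corollary is essentially bookkeeping, the only points demanding care being the suspension-closure check and correctly matching the stable category and the localization $\cha{G}_{dw}/\class{E}$ with $K(\cat{G})$ and $\class{D}(\cat{G})$.
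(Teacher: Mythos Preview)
Your proposal is correct and follows essentially the same route the paper intends: the corollary is stated without proof precisely because it is immediate from Corollaries~\ref{cor-K-injective model for derived category} and~\ref{cor-K-projective model for derived category} together with Corollary~\ref{cor-recollements for WIC Frobenius cats}, and your write-up spells this out accurately. One small redundancy: you need not separately verify that $\class{E}$ is suspension closed, since by definition a localizing cotorsion triple is simply a triple in which both adjacent pairs are localizing cotorsion pairs, and you already have this from the two cited corollaries.
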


\begin{remark}\label{remark-NOT cofibrantly generated}
The K-injective and K-projective model structures in $\cha{G}$ are not cofibrantly generated in general. In fact even for most rings they are not. This is due to the fact that the exact category $\ch_{dw}$ typically won't have a set of generators. See~\cite[Remark~1.6]{saorin-stovicek}.

\end{remark}

\section{Model structures and recollements on complexes of flat modules}\label{sec-Murfet and Neeman models and recollements}

In this section we give model category interpretations of recollements due to Neeman and Murfet from~\cite{neeman} and~\cite{murfet}. Some of the model structures we construct will require that $R$ be a ring while others hold for more general categories of quasi-coherent sheaves. Since we are interested in how these models are interlaced through recollement situations we will let $R$ be a ring throughout, as this is needed for all the recollements except the last one. But the interested reader will have no problems noticing that all the statements that don't require enough projectives hold in more generality. The reason for this is that all the difficulties have been front-loaded to cotorsion pairs and follow formally from completeness of the flat cotorsion pair. We end this section by returning to this point. The first main point we are making in the presentations here is that once the ``correct'' model structures are found, the recollement situations from~\cite{neeman} and~\cite{murfet} follow at once. The second main point is that the correct model structures are most easily constructed by ``restricting'' model structures from $\ch$ to the category $\chain{\class{F}}$ of flat modules.

So let $R$ be a ring and denote the class of flat modules by $\class{F}$. The modules in $\class{C} = \rightperp{\class{F}}$ are called cotorsion modules. A important result from~\cite{enochs-flat-cover-theorem} is that $(\class{F},\class{C})$ is a complete cotorsion pair. Being a cotorsion pair, $\class{F}$ is closed under extensions and so it inherits the structure of an exact category where the short exact sequences are the usual ones but with all three terms in $\class{F}$. Since $\class{F}$ is closed under retracts this is automatically a WIC exact structure, and since it is closed under coproducts it is automatically IC exact by comments in Section~\ref{subsec-chain complexes form a Frobenius cat}. We let $\chain{\class{F}}$ denote the category of all chain complexes of flat modules along with the degreewise exact structure coming from $\class{F}$, and we conclude that $\chain{\class{F}}$ is also IC exact from Lemma~\ref{lemma-ch(A)}. We also have from Proposition~\ref{prop-Ch(A)_dw is Frobenius}, the IC Frobenius category $\cha{F}_{dw}$, which has the degreewise split structure. Following previous notation of the author we have the following classes of chain complexes.
\begin{itemize}
\item $\dwclass{F}$ = Class of all complexes of flat modules.
\item $\exclass{F}$ = Class of all exact complexes of flat modules.
\item $\tilclass{F}$ = Class of all exact complexes of flat modules having flat cycle modules.
\end{itemize}
Note that $\tilclass{F}$ is precisely the class of all pure exact complexes of flat modules.
We use similar notation for complexes of projectives, for example, $\dwclass{P}$ will denote the class of all complexes of projectives. To be clear, we point out that we are using the notation $\dwclass{F}$ to denote just the class of all complexes of flat modules, without any extra structure, while $\chain{\class{F}}$ denotes the category with the exact structure.

Interestingly enough, we see two natural contenders for the derived category $\class{D}(\class{F})$. The most obvious choice is $\cha{F}/\exclass{F}$ because $\exclass{F}$ is the class of exact complexes of flat modules. As we will see in this section, this category is equivalent to the usual derived category $\class{D}(R)$. Generally speaking, if $(\class{F},\class{C})$ is a complete hereditary cotorsion pair in $\ch$, then the ``ambient'' derived category $\cha{F}/\exclass{F}$ will be equivalent to the usual derived category, basically because $\cha{F}$ contains the DG-projective complexes. On the other hand, following the definition in~\cite{buhler-exact categories}, an exact complex in the exact category $\cha{F}$ boils down to the complexes in $\tilclass{F}$. So it is $\cha{F}/\tilclass{F}$ that we will refer to as the derived category of flat modules, and this corresponds to Murfet's \emph{mock homotopy category of projectives}~\cite{murfet}. The notation such as $\cha{F}/\tilclass{F}$ makes sense once we establish that we have an exact model structure on $\cha{F}$ having $\tilclass{F}$ as the trivial objects. It then follows from Proposition~\ref{prop-triangulated localization} that $\cha{F}/\tilclass{F}$ is universal with respect to ``killing'' the complexes in $\tilclass{F}$. That is, for any exact functor $\cha{F} \xrightarrow{} \class{T}$ where $\class{T}$ is triangulated, if the functor takes complexes in $\tilclass{F}$ to 0, the functor uniquely extends to the homotopy category $\cha{F}/\tilclass{F}$.

The following lemma makes clear just what exactly the projective and injective objects are in $\cha{F}$. Although we are only working with the cotorsion pair $(\class{F},\class{C})$ in this section, note that the lemma holds more generally for other hereditary cotorsion pairs.

\begin{lemma}\label{lemma-classification of projectives and injectives in Ch(F)}
The exact category $\class{F}$ has enough projectives and enough injectives. The projectives are the usual projective modules and the injectives are the cotorsion flat modules. This gives us the following.
\begin{enumerate}
\item $\cha{F}$ has enough projectives and the following are equivalent:
\begin{itemize}
\item $X$ is projective in $\cha{F}$.
\item $X$ is a projective chain complex in $\ch$. That it, $X$ is exact with projective cycle modules.
\item $X$ is a contractible complex with projective components.
\item $X$ is a split exact complex with projective components.
\end{itemize}
\item $\cha{F}$ has enough injectives and the following are equivalent:
\begin{itemize}
\item $X$ is injective in $\cha{F}$.
\item $X$ is a cotorsion flat chain complex in $\ch$. That it, $X$ is exact with cotorsion flat cycle modules.
\item $X$ is a contractible complex with cotorsion flat components.
\item $X$ is a split exact complex with cotorsion flat components.
\end{itemize}
\end{enumerate}
\begin{enumerate}
\setcounter{enumi}{2}
\item $\cha{F}_{dw}$ is Frobenius and the following are equivalent:
\begin{itemize}
\item $X$ is projective-injective in $\cha{F}_{dw}$.
\item $X$ is a contractible complex with flat components.
\item $X$ is a split exact complex with flat components.
\end{itemize}
\end{enumerate}
\end{lemma}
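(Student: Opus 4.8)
The plan is to reduce the whole statement to results already established for chain complexes over a general exact category. First I would pin down the projective and injective objects of the exact category $\class{F}$ itself; with that in hand, part (1) is immediate from Corollary~\ref{cor-Ch(A) enough projectives}, part (2) from its injective dual, and part (3) from Proposition~\ref{prop-Ch(A)_dw is Frobenius}. Throughout one uses that $\class{F}$ is closed under extensions, retracts, and coproducts in $\rmod$, hence is an IC exact category (as observed just before the lemma), so that $\cha{F}$ and $\cha{F}_{dw}$ are IC exact as well by Lemma~\ref{lemma-ch(A)}.

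For the base case I would argue as follows. Every projective $R$-module is flat, and $\Hom(P,-)$ sends \emph{all} surjections to surjections, in particular the admissible epimorphisms of $\class{F}$; so projective modules are projective in $\class{F}$. Conversely, if $P \in \class{F}$ is projective in $\class{F}$, choose a free module $L$ and a surjection $L \twoheadrightarrow P$ with kernel $K$. Since $P$ is flat, the long exact $\Tor$-sequence of $0 \to K \to L \to P \to 0$ gives $\Tor^R_1(M,K) \cong \Tor^R_2(M,P) = 0$ for all $M$, so $K$ is flat and $L \twoheadrightarrow P$ is an admissible epimorphism in $\class{F}$. It therefore splits, exhibiting $P$ as a summand of a free module. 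The same resolution shows $\class{F}$ has enough projectives. For the injectives, note that if $X, C \in \class{F}$ then every extension of $X$ by $C$ has flat middle term, so $\Ext^1_{\class{F}}(X,C) = \Ext^1_R(X,C)$; hence an object of $\class{F}$ is injective in $\class{F}$ exactly when it lies in $\class{F} \cap \rightperp{\class{F}}$, i.e.\ is a cotorsion flat module. For enough injectives, given $F \in \class{F}$ the completeness of the cotorsion pair $(\class{F},\class{C})$ from~\cite{enochs-flat-cover-theorem} provides a short exact sequence $F \rightarrowtail C \twoheadrightarrow F'$ with $C \in \class{C}$ and $F' \in \class{F}$; then $C$ is flat (an extension of two flats), so $C$ is a cotorsion flat module and the sequence is admissible in $\class{F}$.

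Now I would invoke the general machinery. Corollary~\ref{cor-Ch(A) enough projectives} applied to $\cat{A} = \class{F}$ gives that $\cha{F}$ has enough projectives and that its projective objects are precisely the contractible complexes whose components are projective in $\class{F}$; since $\class{F}$ is idempotent complete, the same corollary identifies these with the split exact complexes having projective components. It then remains to match these with the ``exact with projective cycle modules'' bullet: a split exact complex is $\bigoplus_n D^n(Z_n)$ on its cycle modules and $D^n$ of a projective module is a projective object of $\ch$, while conversely an exact complex with projective cycles has each $Z_n \rightarrowtail X_n \twoheadrightarrow Z_{n-1}$ split, so it is such a direct sum of disks --- this is the standard identification of the projective objects of $\ch$ with the exact complexes having projective cycle modules. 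This settles (1); part (2) is the verbatim dual, with ``projective module'' replaced by ``cotorsion flat module'' and Corollary~\ref{cor-Ch(A) enough projectives} replaced by its injective dual. Finally, Proposition~\ref{prop-Ch(A)_dw is Frobenius} applied to the additive category $\class{F}$ says $\cha{F}_{dw}$ is Frobenius with projective-injective objects the contractible complexes (equivalently the retracts of split exact complexes of flats); since $\class{F}$ is idempotent complete these coincide with the split exact complexes of flat modules, and every such complex has flat components and flat cycles, giving (3). The only step I expect to carry genuine content is the base-case description of the injective objects of $\class{F}$, which leans on the nontrivial completeness of the flat cotorsion pair and on the identification $\Ext^1_{\class{F}} = \Ext^1_R$ on flats; everything else is a one-line $\Tor$ computation, a formal application of Corollary~\ref{cor-Ch(A) enough projectives} and Proposition~\ref{prop-Ch(A)_dw is Frobenius}, and the routine translation between ``contractible with prescribed components'', ``split exact with prescribed components'', and ``exact with prescribed cycle modules''.
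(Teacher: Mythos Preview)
Your proposal is correct and follows essentially the same architecture as the paper's proof: first identify the projectives and injectives of the exact category $\class{F}$, then invoke Corollary~\ref{cor-Ch(A) enough projectives} and Proposition~\ref{prop-Ch(A)_dw is Frobenius} together with idempotent completeness of $\class{F}$. The paper's argument is terser---for the injectives it simply embeds an injective object of $\class{F}$ into a cotorsion flat via completeness of $(\class{F},\class{C})$ and splits, rather than your route through $\Ext^1_{\class{F}} = \Ext^1_R$---and it leaves the ``exact with projective/cotorsion-flat cycles'' bullet implicit, but these are cosmetic differences rather than a different strategy.
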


\begin{proof}
Since $\class{F}$ is idempotent complete, this comes from Proposition~\ref{prop-Ch(A)_dw is Frobenius} and Corollary~\ref{cor-Ch(A) enough projectives} once we see that the projective and injective objects in $\class{F}$ are as claimed. But this is easy. The projectives in this category are the usual projectives, for if $F$ is projective in $\class{F}$, then write an epimorphism $P \twoheadrightarrow F$ with $P$ a projective module. This epimorphism must split making $F$ projective too. We claim that the injectives in $\class{F}$ are the cotorsion flat modules. To see this, note that if $F \in \class{F}$ is injective we can use completeness of the flat cotorsion pair to find a short exact sequence $0 \xrightarrow{} F \xrightarrow{} C \xrightarrow{} F' \xrightarrow{} 0$ with $C$ cotorsion flat and $F'$ flat. This is an admissible s.e.s. in $\class{F}$ and so must split, making $F$ cotorsion flat.
\end{proof}

\begin{proposition}\label{prop-generating projective cotorsion pairs in Ch(F)}
 Let $(\class{P},\class{W})$ be a projective cotorsion pair in $\ch$ with $\class{P}$ contained in the class of degreewise projective complexes. Then $(\class{P},\class{W} \cap \dwclass{F})$ is a projective cotorsion pair in the exact category $\chain{\class{F}}$. The class $\class{P}$ cogenerates a localizing cotorsion pair $(K(\class{P}),\class{W} \cap \dwclass{F})$ in the Frobenius category $\chain{\class{F}}_{dw}$ and the class $K(\class{P})$ is characterized by the following equivalent statements for a complex $X \in \chain{\class{F}}$:
\begin{enumerate}
\item $X \in K(\class{P})$
\item There exists a complex $P \in \class{P}$ and contractible complexes of flats $W_1, W_2 \in \chain{\class{F}}$ such that $X \oplus W_1 \cong P \oplus W_2$.
\item There exists a complex $P \in \class{P}$ and contractible complexes $W_1, W_2 \in \ch$ such that $X \oplus W_1 \cong P \oplus W_2$.
\item $X$ is chain homotopy equivalent to some complex $P \in \class{P}$.
\end{enumerate}
\end{proposition}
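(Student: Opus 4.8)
The plan is to obtain everything by applying the projective duals of Theorem~\ref{them-generalized K-injective models} and Proposition~\ref{prop-characterization of KF-injective complexes} to the exact category $\cat{A} = \class{F}$ of flat $R$-modules, equipped with the exact structure inherited from the flat cotorsion pair $(\class{F},\class{C})$. By Lemma~\ref{lemma-classification of projectives and injectives in Ch(F)} this $\cat{A}$ is a WIC (indeed IC) exact category with enough projectives and enough injectives, and the projective objects of $\chain{\class{F}}$ are precisely the complexes with projective components; so a complex in $\chain{\class{F}}$ is degreewise projective, in the sense of that exact structure, exactly when each component is a projective module. In particular $\class{P}$, whose complexes have projective components, is contained in the degreewise-projective complexes of $\chain{\class{F}}$, which is the hypothesis needed for the cited results.

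The only genuine work is the first assertion, that $(\class{P},\class{W}\cap\dwclass{F})$ is a projective cotorsion pair in $\chain{\class{F}}$. The key elementary fact is that if $P$ is a degreewise-projective complex then any degreewise-exact short exact sequence $A \rightarrowtail B \twoheadrightarrow P$ is degreewise split, so $B_n \cong A_n \oplus P_n$; hence, when $A$ and $P$ have flat components, so does $B$, the sequence lies in $\chain{\class{F}}$, and $\Ext^1_{\chain{\class{F}}}(P,Y) = \Ext^1_{\ch}(P,Y)$ for every $Y \in \dwclass{F}$. From this the orthogonality relations defining the cotorsion pair $(\class{P},\class{W})$ in $\ch$ restrict to $\chain{\class{F}}$: if $Y\in\dwclass{F}$ is right orthogonal to $\class{P}$ in $\chain{\class{F}}$ then it is right orthogonal to $\class{P}$ in $\ch$, so $Y \in \class{W}\cap\dwclass{F}$; and if $X\in\dwclass{F}$ is left orthogonal to $\class{W}\cap\dwclass{F}$, then splitting a special $\class{P}$-precover $W \rightarrowtail P \twoheadrightarrow X$ of $(\class{P},\class{W})$ in $\ch$ — whose term $W$ is flat because the sequence is degreewise split with $P$ and $X$ flat — exhibits $X$ as a summand of $P\in\class{P}$. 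The same degreewise-splitting remark shows that both the special $\class{P}$-precovers and the special $\class{W}$-preenvelopes supplied by completeness of $(\class{P},\class{W})$ in $\ch$ already lie in $\chain{\class{F}}$ when the input complex does, giving completeness. Thickness of $\class{W}\cap\dwclass{F}$ in $\chain{\class{F}}$ is inherited from thickness of $\class{W}$ in $\ch$ (a short exact sequence in $\chain{\class{F}}$ is one in $\ch$), and $\class{P}\cap(\class{W}\cap\dwclass{F}) = \class{P}\cap\class{W}$ is the class of projective complexes of $\ch$, which by Lemma~\ref{lemma-classification of projectives and injectives in Ch(F)} is exactly the class of projective objects of $\chain{\class{F}}$. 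Hence $(\class{P},\class{W}\cap\dwclass{F})$ is a projective cotorsion pair in $\chain{\class{F}}$.

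Now the projective dual of Theorem~\ref{them-generalized K-injective models}, applied with $\cat{A} = \class{F}$ and this projective cotorsion pair, immediately gives that $(K(\class{P}),\class{W}\cap\dwclass{F})$ is a localizing cotorsion pair in $\chain{\class{F}}_{dw}$, where by definition $K(\class{P})$ is the class of $X\in\chain{\class{F}}$ with $\homcomplex(X,W)$ exact for all $W\in\class{W}\cap\dwclass{F}$; this is (1). The projective dual of Proposition~\ref{prop-characterization of KF-injective complexes} then yields the equivalences $(1)\iff(2)\iff(4)$, once one reads ``contractible complex'' inside $\chain{\class{F}}_{dw}$ correctly: by Lemma~\ref{lemma-classification of projectives and injectives in Ch(F)}(3) the projective-injective objects of the Frobenius category $\chain{\class{F}}_{dw}$ are precisely the contractible complexes with flat components, which is exactly the meaning of the $W_1,W_2\in\chain{\class{F}}$ in (2). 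Finally (3) fits in between: (2) implies (3) trivially, since a contractible complex of flats is in particular a contractible complex of $\ch$; conversely, if $X\oplus W_1\cong P\oplus W_2$ with $W_1,W_2$ contractible in $\ch$ and $P\in\class{P}$, then, since a contractible complex is chain homotopy equivalent to the zero complex, $X$ is chain homotopy equivalent to $X\oplus W_1$, which is isomorphic to $P\oplus W_2$, which is chain homotopy equivalent to $P$, so (4) holds. Thus all four statements are equivalent.

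The hard part is really just the second paragraph: one must keep the three exact structures $\ch$, $\chain{\class{F}}$, and $\chain{\class{F}}_{dw}$ carefully apart and check that the restricted pair is genuinely a complete cotorsion pair, but this is driven entirely by the single elementary observation that degreewise-exact extensions of a degreewise-projective complex split degreewise; after that, everything is a matter of quoting the injective results already proved and their evident projective duals.
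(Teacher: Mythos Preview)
Your approach is essentially the same as the paper's: restrict the cotorsion pair from $\ch$ to $\chain{\class{F}}$, verify it is a projective cotorsion pair there, and then invoke the projective duals of Theorem~\ref{them-generalized K-injective models} and Proposition~\ref{prop-characterization of KF-injective complexes}. Your handling of the equivalence $(2)\Leftrightarrow(3)$ via chain homotopy equivalences is a slight variation on the paper's (which instead applies the Frobenius stable-isomorphism criterion separately to $\chain{\class{F}}_{dw}$ and $\ch_{dw}$), but both are fine.

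There is one genuine slip. In showing $\leftperp{[\class{W}\cap\dwclass{F}]}\subseteq\class{P}$ and completeness, you take a special $\class{P}$-precover $W\rightarrowtail P\twoheadrightarrow X$ in $\ch$ and claim the sequence is \emph{degreewise split}, using this to conclude each $W_n$ is flat. That claim is false in general: in degree $n$ you have $0\to W_n\to P_n\to X_n\to 0$ with $P_n$ projective and $X_n$ flat, but neither the projectivity of the middle term nor the flatness of $X_n$ forces a splitting. (Your earlier ``key elementary fact'' applies to extensions \emph{by} a degreewise projective complex, i.e.\ with $P$ on the right; here $P$ is in the middle.) The conclusion that $W_n$ is flat is nevertheless correct, but for a different reason: the flat cotorsion pair $(\class{F},\class{C})$ is hereditary, so $\class{F}$ is closed under kernels of epimorphisms, and $W_n=\ker(P_n\twoheadrightarrow X_n)$ with $P_n,X_n\in\class{F}$. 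This is exactly how the paper argues. Once $W\in\dwclass{F}$ is established, the sequence is admissible in $\chain{\class{F}}$ and the rest of your argument goes through. Note that for the special $\class{W}$-preenvelope $X\rightarrowtail W\twoheadrightarrow P$ your degreewise-splitting argument \emph{does} work, since there $P$ is on the right.
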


\begin{proof}
Lets see first that $(\class{P},\class{W} \cap \dwclass{F})$ is a projective cotorsion pair in $\chain{\class{F}}$. First we note that $\rightperp{\class{P}} = \class{W} \cap \dwclass{F}$, where the ``perp'' here is taken in $\chain{\class{F}}$. That is, starting with an $X \in \chain{\class{F}}$ we have $\Ext^1(P,X)=0$ for all $P \in \class{P}$ if and only if $X \in \class{W}$. So taking the ``left-perp'', inside $\chain{\class{F}}$ again, we automatically have $\class{P} \subseteq \leftperp{[\class{W} \cap \dwclass{F}]}$. So to see that $(\class{P},\class{W} \cap \dwclass{F})$ is a cotorsion pair in $\chain{\class{F}}$ we need to show $\leftperp{[\class{W} \cap \dwclass{F}]} \subseteq \class{P}$.
Before proceeding we point out the following: For an arbitrary complex of flats $X$, since $(\class{P},\class{W})$ is a complete cotorsion pair in $\ch$ we can find a short exact sequence $0 \xrightarrow{} W \xrightarrow{} P \xrightarrow{} X \xrightarrow{} 0$ with $P \in \class{P}$ and $W \in \class{W} \cap \dwclass{F}$. Indeed each $W_n$ is flat since $(\class{F},\class{C})$ is an hereditary cotorsion pair. In particular this is an admissible short exact sequence in $\chain{\class{F}}$. Using this observation we now show $\leftperp{[\class{W} \cap \dwclass{F}]} \subseteq \class{P}$.
So let $X \in \leftperp{[\class{W} \cap \dwclass{F}]}$, where of course we mean $X$ is in $\chain{\class{F}}$ and this ``left-perp'' is taken in $\chain{\class{F}}$. Then the admissible s.e.s. $0 \xrightarrow{} W \xrightarrow{} P \xrightarrow{} X \xrightarrow{} 0$ must split, telling us $X$ is a retract of $P \in \class{P}$. It follows that $X$ too is in $\class{P}$. This shows that $(\class{P}, \class{W} \cap \dwclass{F})$ is a cotorsion pair in $\chain{\class{F}}$ and we observe that it is virtually automatic that $(\class{P}, \class{W} \cap \dwclass{F})$ is a projective cotorsion pair in $\chain{\class{F}}$. Indeed the cotorsion pair has enough projectives due to the observation of the existence of the admissible short exact sequence above. In the same way, it is obvious that the cotorsion pair has enough injectives, since $(\class{P}, \class{W})$ does in $\ch$. Since by hypothesis $\class{W}$ is thick and contains the projective complexes, we see that $\class{W} \cap \dwclass{F}$ is thick in $\chain{\class{F}}$ and contains the projective complexes, which are the projective objects in $\chain{\class{F}}$ by Lemma~\ref{lemma-classification of projectives and injectives in Ch(F)}. So we see from~\cite[Proposition~3.7~(3)]{gillespie-recollements} that $(\class{P}, \class{W} \cap \dwclass{F})$ is a projective cotorsion pair in $\chain{\class{F}}$.

The rest is automatic from the dual of Theorem~\ref{them-generalized K-injective models} and its continuation in Proposition~\ref{prop-characterization of KF-injective complexes}. We will just add a reason as to why (2) and (3) are equivalent. As stated in the proof of Prop~\ref{prop-characterization of KF-injective complexes}, in any WIC Frobenius category $\cat{A}$, we know that objects $A$ and $B$ are isomorphic in the stable category $\cat{A}/\sim$ if and only if there are projective-injective objects $W_1,W_2$ in $\cat{A}$ and an $\cat{A}$-isomorphism $A \oplus W_1 \cong B \oplus W_2$. Applying this to the WIC Frobenius $\cha{F}_{dw}$ gives us (2) if and only if (4) since according to Lemma~\ref{lemma-classification of projectives and injectives in Ch(F)} the stable category of $\chain{\class{F}}_{dw}$ is the usual homotopy category and the projective-injective objects are the split exact complexes of with flat components. But applying the same idea to the WIC Frobenius $\ch_{dw}$ gives us (3) if and only if (4) as well.
\end{proof}

\begin{remark}
We note that the identity functor on $\chain{\class{F}}$ is a (left) Quillen equivalence from the projective model structure $(\class{P},\class{W} \cap \dwclass{F})$ on $\chain{\class{F}}$ to the projective model structure induced by $(K(\class{P}),\class{W} \cap \dwclass{F})$. Moreover, the homotopy categories associated to these model structures are equivalent to the usual homotopy categories of complexes $\class{P}/\sim$ and $K(\class{P})/\sim \,$.
\end{remark}

We now turn around and look at a similar way to construct injective cotorsion pairs in $\chain{\class{F}}$.

\begin{proposition}\label{prop-generating injective cotorsion pairs in Ch(F)}
Let $(\hat{\class{F}}, \hat{\class{C}})$ be a complete cotorsion pair in $\ch$ with $\hat{\class{F}} \subseteq \dwclass{F}$. Assume that $\hat{\class{F}}$ is thick in the exact category $\chain{\class{F}}$ and that $\hat{\class{F}}$ contains all the split exact complexes with flat components. Then $(\hat{\class{F}}, \hat{\class{C}} \cap \dwclass{F})$ is an injective cotorsion pair in $\chain{\class{F}}$. The class $\hat{\class{C}} \cap \dwclass{F}$ generates a localizing cotorsion pair $(\hat{\class{F}}, K(\hat{\class{C}} \cap \dwclass{F}))$ in the Frobenius category $\chain{\class{F}}_{dw}$ and the class $K(\hat{\class{C}} \cap \dwclass{F})$ is characterized by the following equivalent statements for a complex $X \in \chain{\class{F}}$:
\begin{enumerate}
\item $X \in K(\hat{\class{C}} \cap \dwclass{F})$
\item There exists a $C \in \hat{\class{C}} \cap \dwclass{F}$ and contractible complexes of flats $W_1, W_2 \in \chain{\class{F}}$ such that $X \oplus W_1 \cong C \oplus W_2$.
\item There exists a $C \in \hat{\class{C}} \cap \dwclass{F}$ and contractible complexes $W_1, W_2 \in \ch$ such that $X \oplus W_1 \cong C \oplus W_2$.
\item $X$ is chain homotopy equivalent to some $C \in \hat{\class{C}} \cap \dwclass{F}$.
\end{enumerate}
\end{proposition}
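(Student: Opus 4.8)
The plan is to prove the first assertion --- that $(\hat{\class{F}},\hat{\class{C}}\cap\dwclass{F})$ is an injective cotorsion pair in $\chain{\class{F}}$ --- by hand, dualizing the argument already given for Proposition~\ref{prop-generating projective cotorsion pairs in Ch(F)}, and then to deduce everything else (the localizing pair in $\cha{F}_{dw}$ and the characterization (1)--(4)) by invoking Theorem~\ref{them-generalized K-injective models} and Proposition~\ref{prop-characterization of KF-injective complexes} applied to the WIC (indeed IC) exact category $\cat{A}=\class{F}$ of flat modules, which has enough injectives by Lemma~\ref{lemma-classification of projectives and injectives in Ch(F)}, so that $\cha{A}=\chain{\class{F}}$.

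For the first assertion I would begin with the basic observation that for complexes of flats $X,Y\in\dwclass{F}$ the group $\Ext^1(X,Y)$ is the same whether computed in $\ch$ or in $\chain{\class{F}}$: any extension of a complex of flats by a complex of flats is again a complex of flats, and admissible conflations of $\chain{\class{F}}$ are exactly the degreewise-exact sequences of complexes of flats. Hence, since $(\hat{\class{F}},\hat{\class{C}})$ is a cotorsion pair in $\ch$, one gets immediately $\hat{\class{F}}\subseteq\leftperp{[\hat{\class{C}}\cap\dwclass{F}]}$ and $\hat{\class{C}}\cap\dwclass{F}\subseteq\rightperp{\hat{\class{F}}}$, the perps taken in $\chain{\class{F}}$. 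For the reverse inclusions I would use completeness of $(\hat{\class{F}},\hat{\class{C}})$ in $\ch$: for any $X\in\dwclass{F}$ there is a short exact sequence $0\to C\to F\to X\to 0$ in $\ch$ with $F\in\hat{\class{F}}$ and $C\in\hat{\class{C}}$, and $C$ is degreewise flat because $(\class{F},\class{C})$ is hereditary, so $\class{F}$ is resolving; thus this is an admissible conflation of $\chain{\class{F}}$ with $C\in\hat{\class{C}}\cap\dwclass{F}$. If $X\in\leftperp{[\hat{\class{C}}\cap\dwclass{F}]}$ it splits, exhibiting $X$ as a direct summand of $F\in\hat{\class{F}}$, so $X\in\hat{\class{F}}$; the symmetric argument using the ``enough injectives'' sequence $0\to X\to C'\to F'\to 0$ of $(\hat{\class{F}},\hat{\class{C}})$ in $\ch$ (with $C'$ again degreewise flat) gives $\rightperp{\hat{\class{F}}}\subseteq\hat{\class{C}}\cap\dwclass{F}$. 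These same two conflations also show that the cotorsion pair $(\hat{\class{F}},\hat{\class{C}}\cap\dwclass{F})$ has enough projectives and injectives in $\chain{\class{F}}$, hence is complete. Finally, by Proposition~\ref{prop-characterizations of injective cotorsion pairs}, to conclude that the pair is \emph{injective} it remains only to note that $\hat{\class{F}}$ is thick (a hypothesis) and contains the injective objects of $\chain{\class{F}}$; by Lemma~\ref{lemma-classification of projectives and injectives in Ch(F)}(2) those are the split exact complexes with cotorsion flat components, which lie in $\hat{\class{F}}$ since by hypothesis $\hat{\class{F}}$ contains all split exact complexes with flat components.

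For the remaining assertions I would apply Theorem~\ref{them-generalized K-injective models} with $\cat{A}=\class{F}$, the one hypothesis left to verify being that $\hat{\class{C}}\cap\dwclass{F}$ consists of complexes that are degreewise injective in $\class{F}$, i.e. have cotorsion flat components. This is the only genuinely new point, and the main obstacle: for a flat module $G$ the disk $D^n(G)$ is a split exact complex with flat components, hence lies in $\hat{\class{F}}$, so $\Ext^1_{\ch}(D^n(G),C)=0$ for every $C\in\hat{\class{C}}$ and every $n$; via the standard isomorphism $\Ext^1_{\ch}(D^n(G),C)\cong\Ext^1_R(G,C_n)$ this forces each component $C_j\in\rightperp{\class{F}}=\class{C}$, i.e. cotorsion, and if in addition $C\in\dwclass{F}$ then each $C_j$ is cotorsion flat, which is precisely ``injective in $\class{F}$'' by Lemma~\ref{lemma-classification of projectives and injectives in Ch(F)}. (In particular $\hat{\class{C}}\cap\dwclass{F}$ is exactly the class of ``$\class{F}$-injective'' complexes in the sense of Theorem~\ref{them-generalized K-injective models}.) Theorem~\ref{them-generalized K-injective models} then gives that $(\hat{\class{F}},K(\hat{\class{C}}\cap\dwclass{F}))$ is a localizing cotorsion pair in $\cha{F}_{dw}$, with $K(\hat{\class{C}}\cap\dwclass{F})=\rightperp{\hat{\class{F}}}$ computed in $\cha{F}_{dw}$, namely the $X\in\chain{\class{F}}$ with $\homcomplex(F,X)$ exact for all $F\in\hat{\class{F}}$, and Proposition~\ref{prop-characterization of KF-injective complexes} supplies the equivalence of (1), (2), and (4). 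The remaining equivalence (2)$\Leftrightarrow$(3) is handled exactly as in the proof of Proposition~\ref{prop-generating projective cotorsion pairs in Ch(F)}: one uses the formal fact that in a WIC Frobenius category two objects become isomorphic in the stable category precisely after adding projective-injective summands, applying it once in $\cha{F}_{dw}$ (whose projective-injectives are the contractible complexes of flats, Lemma~\ref{lemma-classification of projectives and injectives in Ch(F)}(3)) to obtain (2)$\Leftrightarrow$(4), and once in $\ch_{dw}$ (whose projective-injectives are the contractible complexes) to obtain (3)$\Leftrightarrow$(4); both stable categories are the ordinary homotopy category, so each ``$\oplus W_1\cong\oplus W_2$'' statement coincides with chain homotopy equivalence to a complex in $\hat{\class{C}}\cap\dwclass{F}$.
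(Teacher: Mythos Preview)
Your proof is correct and follows essentially the same approach as the paper: first establish that $(\hat{\class{F}},\hat{\class{C}}\cap\dwclass{F})$ is an injective cotorsion pair in $\chain{\class{F}}$ by restricting the complete pair $(\hat{\class{F}},\hat{\class{C}})$ from $\ch$, then invoke Theorem~\ref{them-generalized K-injective models} and Proposition~\ref{prop-characterization of KF-injective complexes} for the localizing pair and the characterization (1)--(4), with the (2)$\Leftrightarrow$(3) step handled exactly as in Proposition~\ref{prop-generating projective cotorsion pairs in Ch(F)}. The one place you are more careful than the paper is in verifying that every $C\in\hat{\class{C}}$ has cotorsion components, via the disk argument $D^n(G)\in\hat{\class{F}}$ for flat $G$; the paper simply asserts ``each $C_n$ is cotorsion'' when showing the sequence $0\to C\to F\to X\to 0$ is degreewise split, and likewise tacitly uses this when declaring the remaining assertions ``automatic'' from Theorem~\ref{them-generalized K-injective models} (whose hypothesis is precisely that the right-hand class consist of degreewise injective complexes in $\class{F}$).
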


\begin{proof}
We first show that $(\hat{\class{F}}, \hat{\class{C}} \cap \dwclass{F})$ is a cotorsion pair in $\chain{\class{F}}$. Since $\rightperp{\hat{\class{F}}} = \hat{\class{C}}$ in $\ch$, it is clear that in $\chain{\class{F}}$  we have $\rightperp{\hat{\class{F}}} = \hat{\class{C}} \cap \dwclass{F}$. That is, starting with an $X \in \chain{\class{F}}$ we have $\Ext^1(F,X)=0$ for all $F \in \hat{\class{F}}$ if and only if $X \in \hat{\class{C}}$. Next, taking the ``left-perp'' inside $\chain{\class{F}}$ we automatically have $\hat{\class{F}} \subseteq \leftperp{[\hat{\class{C}} \cap \dwclass{F}]}$, so it is left to show $\leftperp{[\hat{\class{C}} \cap \dwclass{F}]} \subseteq \hat{\class{F}}$. So let $X \in \cha{F}$ be in $\leftperp{[\hat{\class{C}} \cap \dwclass{F}]}$. Since $(\hat{\class{F}},\hat{\class{C}})$ is a complete cotorsion pair in $\ch$ we can find a short exact sequence $0 \xrightarrow{} C \xrightarrow{} F \xrightarrow{} X \xrightarrow{} 0$ with $F \in \hat{\class{F}}$ and $C \in \hat{\class{C}}$. Since each $F_n,X_n$ are flat, each $C_n$ must also be flat. So $C \in \hat{\class{C}} \cap \dwclass{F}$. Since each $C_n$ is cotorsion and each $X_n$ is flat it is a degreewise split sequence and so it represents an admissible short exact sequence in both $\chain{\class{F}}$ and $\chain{\class{F}}_{dw}$. Anyway, it must split by hypothesis, telling us $X$ is a retract of $F \in \hat{\class{F}}$. It follows that $X$ too is in $\hat{\class{F}}$. This shows that $(\hat{\class{F}}, \hat{\class{C}} \cap \dwclass{F})$ is a cotorsion pair in $\chain{\class{F}}$ and we observe that it is virtually automatic that $(\hat{\class{F}}, \hat{\class{C}} \cap \dwclass{F})$ is an injective cotorsion pair in $\chain{\class{F}}$. Indeed the cotorsion pair has enough projectives due to the observation above. In the same way, it is obvious that the cotorsion pair has enough injectives, since $(\hat{\class{F}}, \hat{\class{C}})$ does in $\ch$. By assumption $\hat{\class{F}}$ is thick in $\chain{\class{F}}$ and contains the injective objects, which are the split exact complexes of cotorsion flat modules. So we see from~\cite[Proposition~3.6~(3)]{gillespie-recollements} that $(\hat{\class{F}}, \hat{\class{C}} \cap \dwclass{F})$ is an injective cotorsion pair in $\chain{\class{F}}$.

The rest is now automatic from Theorem~\ref{them-generalized K-injective models} and its continuation in Proposition~\ref{prop-characterization of KF-injective complexes}.
\end{proof}

\begin{remark}
The identity functor on $\chain{\class{F}}$ is a (right) Quillen equivalence from the injective model structure $(\hat{\class{F}}, \hat{\class{C}} \cap \dwclass{F})$ on $\chain{\class{F}}$ to the injective model structure induced by $(\hat{\class{F}}, K(\hat{\class{C}} \cap \dwclass{F}))$. Moreover, the homotopy categories associated to these model structures are equivalent to the usual homotopy categories of complexes $(\hat{\class{C}} \cap \dwclass{F})/\sim$ and $K(\hat{\class{C}} \cap \dwclass{F})/\sim \,$.
\end{remark}

\subsection{The derived category of complexes of flat modules}

We now interpret work of Neeman and Murfet in terms of exact model structures. In particular, we construct both a projective and an injective model structure on $\cha{F}$ whose homotopy category is the derived category $\cha{F}/\tilclass{F}$ discussed in the introduction. Recall that $\tilclass{F}$ denotes the class of exact complexes in the exact category $\cha{F}$, so these are the exact complexes with each cycle module flat. From basic facts of purity, it is clear that these are precisely the pure exact complexes which lie in $\cha{F}$. We also recall now  that $\dwclass{P}$ denotes the class of all complexes of projective modules. An important technical fact proved in~\cite{neeman} is that $\rightperp{\dwclass{P}} \cap \dwclass{F} = \tilclass{F}$. Using this, our results follow quickly as corollaries to the above propositions because we already have a couple of nicely behaved cotorsion pairs.

\begin{corollary}\label{cor-the projective cotorsion pair for the mock homotopy category of projectives}
$(\dwclass{P}, \tilclass{F})$ is a projective cotorsion pair in the exact category $\chain{\class{F}}$. The class $\dwclass{P}$ cogenerates a localizing cotorsion pair $(K(\dwclass{P}), \tilclass{F})$ in the Frobenius category $\chain{\class{F}}_{dw}$ and the class $K(\dwclass{P})$ is characterized by the following equivalent statements for a complex $X \in \chain{\class{F}}$:
\begin{enumerate}
\item $X \in K(\dwclass{P})$
\item There exists a complex of projectives $P \in \dwclass{P}$ and contractible complexes of flats $W_1, W_2 \in \chain{\class{F}}$ such that $X \oplus W_1 \cong P \oplus W_2$.
\item There exists a complex of projectives $P \in \dwclass{P}$ and contractible complexes $W_1, W_2 \in \ch$ such that $X \oplus W_1 \cong P \oplus W_2$.
\item $X$ is chain homotopy equivalent to some complex of projectives $P$.
\end{enumerate}
Moreover, the identity functor on $\chain{\class{F}}$ is a (left) Quillen equivalence from the projective model structure $(\dwclass{P}, \tilclass{F})$ on $\chain{\class{F}}$ to the projective model structure induced by $(K(\dwclass{P}), \tilclass{F})$.
\end{corollary}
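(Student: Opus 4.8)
The plan is to read this off as the specialization of Proposition~\ref{prop-generating projective cotorsion pairs in Ch(F)} to a single, well-chosen cotorsion pair on $\ch$, combined with Neeman's orthogonality identity. Concretely, take $(\class{P},\class{W}) = (\dwclass{P},\rightperp{\dwclass{P}})$, the projective cotorsion pair in $\ch$ whose left-hand class is the class of all complexes of projective modules. The one input that is not purely formal is the fact, recalled from~\cite{neeman} (see also~\cite{neeman-adjoints}), that $(\dwclass{P},\rightperp{\dwclass{P}})$ genuinely is a \emph{projective} cotorsion pair in $\ch$: that it is complete (in particular has enough projectives), that $\rightperp{\dwclass{P}}$ is thick, and that $\dwclass{P}\cap\rightperp{\dwclass{P}}$ is exactly the class of projective objects of $\ch$. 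Since $\dwclass{P}$ visibly consists of degreewise projective complexes, the hypothesis of Proposition~\ref{prop-generating projective cotorsion pairs in Ch(F)} that ``$\class{P}$ is contained in the class of degreewise projective complexes'' holds trivially, so that proposition applies verbatim.

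Granting this, Proposition~\ref{prop-generating projective cotorsion pairs in Ch(F)} immediately yields that $(\dwclass{P},\rightperp{\dwclass{P}}\cap\dwclass{F})$ is a projective cotorsion pair in the exact category $\chain{\class{F}}$, that $\dwclass{P}$ cogenerates a localizing cotorsion pair $(K(\dwclass{P}),\rightperp{\dwclass{P}}\cap\dwclass{F})$ in the Frobenius category $\chain{\class{F}}_{dw}$, and that $K(\dwclass{P})$ is characterized by conditions (1)--(4) there (with $\class{P}$ read as $\dwclass{P}$). It remains only to identify the right-hand class, and here one invokes the technical fact of Neeman recalled just before the corollary, namely $\rightperp{\dwclass{P}}\cap\dwclass{F}=\tilclass{F}$, to rewrite $(\dwclass{P},\rightperp{\dwclass{P}}\cap\dwclass{F})$ as $(\dwclass{P},\tilclass{F})$ and $(K(\dwclass{P}),\rightperp{\dwclass{P}}\cap\dwclass{F})$ as $(K(\dwclass{P}),\tilclass{F})$; the four equivalent descriptions of $K(\dwclass{P})$ transcribe directly from the proposition. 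The final Quillen-equivalence assertion is nothing but the Remark following Proposition~\ref{prop-generating projective cotorsion pairs in Ch(F)}, specialized to $(\class{P},\class{W})=(\dwclass{P},\rightperp{\dwclass{P}})$ and using $\rightperp{\dwclass{P}}\cap\dwclass{F}=\tilclass{F}$ once more: the identity functor on $\chain{\class{F}}$ is a (left) Quillen equivalence from the projective model structure $(\dwclass{P},\tilclass{F})$ to the projective model structure induced by $(K(\dwclass{P}),\tilclass{F})$.

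As for where the work lies: essentially all of it has been front-loaded. The genuinely substantive ingredients --- completeness and thickness for the cotorsion pair $(\dwclass{P},\rightperp{\dwclass{P}})$ in $\ch$, and Neeman's equality $\rightperp{\dwclass{P}}\cap\dwclass{F}=\tilclass{F}$ --- are imported from the literature, and everything else is the one-time bookkeeping carried out in Proposition~\ref{prop-generating projective cotorsion pairs in Ch(F)} and its Remark (whose proofs in turn reduce to the dual of Theorem~\ref{them-generalized K-injective models} and Proposition~\ref{prop-characterization of KF-injective complexes}). So the proof is a short invocation of those results; the only point requiring any care is to confirm that $(\dwclass{P},\rightperp{\dwclass{P}})$ is a projective cotorsion pair rather than merely a complete one --- that is, the thickness of $\rightperp{\dwclass{P}}$ together with the identification of $\dwclass{P}\cap\rightperp{\dwclass{P}}$ with the projectives of $\ch$ --- since that is what Proposition~\ref{prop-generating projective cotorsion pairs in Ch(F)} actually consumes.
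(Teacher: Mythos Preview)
Your proposal is correct and follows essentially the same approach as the paper's proof: apply Proposition~\ref{prop-generating projective cotorsion pairs in Ch(F)} to the projective cotorsion pair $(\dwclass{P},\rightperp{\dwclass{P}})$ in $\ch$, then identify the right-hand class via Neeman's equality $\rightperp{\dwclass{P}}\cap\dwclass{F}=\tilclass{F}$, and invoke the Remark after that proposition for the Quillen equivalence. The only cosmetic difference is that the paper cites \cite[Proposition~7.3~(1)]{gillespie-recollements} rather than \cite{neeman} for the fact that $(\dwclass{P},\rightperp{\dwclass{P}})$ is a projective cotorsion pair in $\ch$.
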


\begin{proof}
We know from \cite[Proposition~7.3~(1)]{gillespie-recollements} that $(\dwclass{P}, \rightperp{\dwclass{P}})$ is a projective cotorsion pair in $\ch$. (Note: In particular this means $\rightperp{\dwclass{P}} \cap \dwclass{P}$ is precisely the class of all projective complexes while Neeman's result says that $\rightperp{\dwclass{P}} \cap \dwclass{F}$ is precisely the class of all flat complexes.) So by Proposition~\ref{prop-generating projective cotorsion pairs in Ch(F)} we know $(\dwclass{P}, \rightperp{\dwclass{P}} \cap \dwclass{F}) = (\dwclass{P} , \tilclass{F})$
is a projective cotorsion pair in $\chain{\class{F}}$. The remaining statements also follow from Proposition~\ref{prop-generating projective cotorsion pairs in Ch(F)} and the Remark that follows that proposition.

\end{proof}

We note again that by establishing $(\dwclass{P}, \tilclass{F})$ as a projective cotorsion pair on $\chain{\class{F}}$ we automatically have that the triangulated localization $\cha{F}/\tilclass{F}$ exists. Moreover we get an equivalence of triangulated categories $\cha{F}/\tilclass{F} \cong \dwclass{P}/\sim$ where $\sim$ is the usual relation of chain homotopy. So we think of $(\dwclass{P}, \tilclass{F})$ as a projective model for Murfet's \emph{mock homotopy category of projectives}. Of course this model structure doesn't generalize to sheaf categories which don't have enough projectives. So the original point of Murfet's work was to describe this category without using projectives. We see now that $\cha{F}/\tilclass{F}$ has an injective model structure with the same trivial objects $\tilclass{F}$. This is analogous to how $\class{D}(R)$ has the standard projective model $(\dgclass{P}, \class{E})$ and the standard injective model $(\class{E},\dgclass{I})$, sharing the class $\class{E}$ of exact complexes as the trivial objects. However, to get this balance on $\cha{F}$, the fibrant objects for our injective model structure for $\cha{F}/\tilclass{F}$ will not be build from injective modules, but rather the cotorsion flat modules. In particular, recall that in the notation and language of~\cite{gillespie} we have the flat cotorsion pair $(\tilclass{F},\dgclass{C})$ in $\ch$. Here we will call a complex $X \in \dgclass{C}$ a DG-cotorsion complex. The cotorsion pair is complete, see for example~\cite[Corollary~4.10]{gillespie}. The class $\dgclass{C}$ consists precisely of all the complexes $Y$ for which $Y_n$ is cotorsion and any $f : F \xrightarrow{} Y$ is null homotopic whenever $F \in \tilclass{F}$.

\begin{corollary}\label{cor-the injective cotorsion pair for the mock homotopy category of projectives}
$(\tilclass{F}, \dgclass{C} \cap \dwclass{F})$ is an injective cotorsion pair in the exact category $\chain{\class{F}}$. The class $\dgclass{C} \cap \dwclass{F}$ generates a localizing cotorsion pair $(\tilclass{F}, K(\dgclass{C} \cap \dwclass{F}))$ in the Frobenius category $\chain{\class{F}}_{dw}$ and the class $K(\dgclass{C} \cap \dwclass{F})$ is characterized by the following equivalent statements for a complex $X \in \chain{\class{F}}$:
\begin{enumerate}
\item $X \in K(\dgclass{C} \cap \dwclass{F})$
\item There exists a DG-cotorsion complex of flats $C \in \dgclass{C} \cap \dwclass{F}$ and contractible complexes of flats $W_1, W_2 \in \chain{\class{F}}$ such that $X \oplus W_1 \cong C \oplus W_2$.
\item There exists a DG-cotorsion complex of flats $C \in \dgclass{C} \cap \dwclass{F}$ and contractible complexes $W_1, W_2 \in \ch$ such that $X \oplus W_1 \cong C \oplus W_2$.
\item $X$ is chain homotopy equivalent to some DG-cotorsion complex of flats $C$.
\end{enumerate}
Moreover, the identity functor on $\chain{\class{F}}$ is a (right) Quillen equivalence from the injective model structure $(\tilclass{F}, \dgclass{C} \cap \dwclass{F})$ on $\chain{\class{F}}$ to the injective model structure induced by $(\tilclass{F}, K(\dgclass{C} \cap \dwclass{F}))$.
\end{corollary}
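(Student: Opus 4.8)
The plan is to deduce the corollary directly from Proposition~\ref{prop-generating injective cotorsion pairs in Ch(F)}, applied to the \emph{flat cotorsion pair} $(\tilclass{F},\dgclass{C})$ in $\ch$. All that needs to be checked is that this cotorsion pair satisfies the three hypotheses of that proposition. First, completeness: the flat cotorsion pair $(\tilclass{F},\dgclass{C})$ is complete, as recorded in the paragraph preceding the statement (\cite[Corollary~4.10]{gillespie}). Second, the containment $\tilclass{F}\subseteq\dwclass{F}$ is immediate, since a complex in $\tilclass{F}$ is by definition an exact complex all of whose terms are flat. Third, $\tilclass{F}$ contains every split exact complex with flat components: such a complex is isomorphic to some $\bigoplus_{n\in\Z}D^n(A_n)$ with each $A_n$ flat, and this is exact with cycle modules $Z_n\cong A_n$ flat, hence lies in $\tilclass{F}$.

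The only hypothesis requiring a remark is that $\tilclass{F}$ is thick in the exact category $\chain{\class{F}}$. The cleanest route is to invoke Corollary~\ref{cor-the projective cotorsion pair for the mock homotopy category of projectives}, which has already established $(\dwclass{P},\tilclass{F})$ as a \emph{projective} cotorsion pair in $\chain{\class{F}}$; by the very definition of a projective cotorsion pair in a WIC exact category, the right-hand class is thick, so $\tilclass{F}$ is thick. (Alternatively, one argues directly that $\tilclass{F}$ is precisely the class of acyclic complexes of $\chain{\class{F}}$, and since $\class{F}$ — hence $\chain{\class{F}}$ — is idempotent complete, this class is closed under direct summands and satisfies the two-out-of-three property on admissible short exact sequences, using that $(\tilclass{F},\dgclass{C})$ is hereditary in $\ch$; but leaning on the earlier corollary avoids repeating this.)

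With the hypotheses in hand, Proposition~\ref{prop-generating injective cotorsion pairs in Ch(F)} with $\hat{\class{F}}=\tilclass{F}$ and $\hat{\class{C}}=\dgclass{C}$ yields at once that $(\tilclass{F},\dgclass{C}\cap\dwclass{F})$ is an injective cotorsion pair in $\chain{\class{F}}$, that $(\tilclass{F},K(\dgclass{C}\cap\dwclass{F}))$ is a localizing cotorsion pair in $\chain{\class{F}}_{dw}$, and that statements (1)--(4) characterizing $K(\dgclass{C}\cap\dwclass{F})$ are equivalent. The final assertion, that the identity on $\chain{\class{F}}$ is a right Quillen equivalence from the injective model structure $(\tilclass{F},\dgclass{C}\cap\dwclass{F})$ to the one induced by $(\tilclass{F},K(\dgclass{C}\cap\dwclass{F}))$, is exactly the content of the Remark following Proposition~\ref{prop-generating injective cotorsion pairs in Ch(F)}. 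There is essentially no obstacle here — the corollary is engineered to be an instance of that proposition — so the only point demanding genuine care is the thickness of $\tilclass{F}$ in $\chain{\class{F}}$, which is why I would cite Corollary~\ref{cor-the projective cotorsion pair for the mock homotopy category of projectives} rather than reprove it.
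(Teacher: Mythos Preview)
Your proposal is correct and follows essentially the same approach as the paper: both deduce the corollary from Proposition~\ref{prop-generating injective cotorsion pairs in Ch(F)} applied to the flat cotorsion pair $(\tilclass{F},\dgclass{C})$, with the paper simply asserting that ``it is easy to see that $\tilclass{F}$ is thick in $\cha{F}$ and that it contains the contractible complexes of flats'' where you spell out the verification more carefully. Your appeal to Corollary~\ref{cor-the projective cotorsion pair for the mock homotopy category of projectives} for thickness is a perfectly valid shortcut, though the paper evidently regards thickness as immediate and does not bother to cite the earlier result.
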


\begin{proof}
It is easy to see that $\tilclass{F}$ is thick in $\cha{F}$ and that it contains the contractible complexes of flats. So the result follows from~\ref{prop-generating injective cotorsion pairs in Ch(F)}.
\end{proof}

Having this model structure leads us immediately to the conclusion $\cha{F}/\tilclass{F} \cong (\dgclass{C} \cap \dwclass{F})/\sim \,$ where $\sim$ is the usual relation of chain homotopy. Note also that we now have shown that $(K(\dwclass{P}),\, \tilclass{F} ,\, K(\dgclass{C} \cap \dwclass{F}))$ is a localizing cotorsion triple in the Frobenius category $\chain{\class{F}}_{dw}$. Denoting the stable category of $\cha{F}_{dw}$ by $K(\class{F})$, we summarize these results below including the recollement of Neeman from~\cite{neeman}. Again, the recollement only exists in the affine case.

\begin{corollary}\label{cor-neemans recollement}
$(K(\dwclass{P}),\, \tilclass{F} ,\, K(\dgclass{C} \cap \dwclass{F}))$ is a localizing cotorsion triple in $\chain{\class{F}}_{dw}$. Corollary~\ref{cor-recollements for WIC Frobenius cats} tells us we have equivalences of triangulated categories
$$K(\dwclass{P})/\sim \ \cong \ \cha{F}/\tilclass{F} \ \cong \ K(\dgclass{C} \cap \dwclass{F})/\sim $$ and $\sim$ is the usual chain homotopy. Setting $\class{X} = K(\dwclass{P})$ and $\class{Z} = K(\dgclass{C} \cap \dwclass{F})$ to simplify notation
we also have a recollement as shown.
\[
\begin{tikzpicture}[node distance=3.5cm]
\node (A) {$\tilclass{F}/\sim$};
\node (B) [right of=A] {$K(\class{F})$};
\node (C) [right of=B] {$\cha{F}/\tilclass{F}$};
\draw[<-,bend left=40] (A.20) to node[above]{\small E$(\class{X},\tilclass{F})$} (B.160);
\draw[->] (A) to node[above]{\small $I$} (B);
\draw[<-,bend right=40] (A.340) to node [below]{\small C$(\tilclass{F},\class{Z})$} (B.200);
\draw[<-,bend left] (B.20) to node[above]{\small $\lambda = \text{C}(\class{X},\tilclass{F})$} (C.160);
\draw[->] (B) to node[above]{\small Q} (C);
\draw[<-,bend right] (B.340) to node [below]{\small $\rho = \text{E}(\tilclass{F},\class{Z})$} (C.200);
\end{tikzpicture}
\]

\end{corollary}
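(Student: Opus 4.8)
The plan is to observe that all the real work has already been carried out, so that Corollary~\ref{cor-neemans recollement} reduces to assembling the two cotorsion pairs just constructed and invoking Corollary~\ref{cor-recollements for WIC Frobenius cats}. First I would recall that $\chain{\class{F}}_{dw}$ is a WIC (indeed IC) Frobenius category: $\class{F}$ is an IC exact category, so $\chain{\class{F}}_{dw}$ is IC Frobenius by Lemma~\ref{lemma-ch(A)} and Proposition~\ref{prop-Ch(A)_dw is Frobenius}, as already recorded in Lemma~\ref{lemma-classification of projectives and injectives in Ch(F)}. Corollary~\ref{cor-the projective cotorsion pair for the mock homotopy category of projectives} gives that $(K(\dwclass{P}), \tilclass{F})$ is a localizing cotorsion pair in $\chain{\class{F}}_{dw}$, and Corollary~\ref{cor-the injective cotorsion pair for the mock homotopy category of projectives} gives that $(\tilclass{F}, K(\dgclass{C} \cap \dwclass{F}))$ is a localizing cotorsion pair in $\chain{\class{F}}_{dw}$. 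Since these two pairs share the middle class $\tilclass{F}$, the definition of a localizing cotorsion triple (stated just before Corollary~\ref{cor-recollements for WIC Frobenius cats}) says at once that $(K(\dwclass{P}),\, \tilclass{F},\, K(\dgclass{C} \cap \dwclass{F}))$ is a localizing cotorsion triple in $\chain{\class{F}}_{dw}$. (Recall that in a WIC Frobenius category a localizing cotorsion pair is simultaneously an injective and a projective cotorsion pair, by Definition~\ref{def-localizing cot pair in frobenius} together with Proposition~\ref{prop-cot pairs in frobenius cats}, so there is no clash with Corollary~\ref{cor-recollements for WIC Frobenius cats} wanting injective pairs.)

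Next I would apply Corollary~\ref{cor-recollements for WIC Frobenius cats} directly, with $\cat{A} = \chain{\class{F}}_{dw}$ and $(\class{X},\class{Y},\class{Z}) = (K(\dwclass{P}),\, \tilclass{F},\, K(\dgclass{C} \cap \dwclass{F}))$. This immediately produces the equivalences of triangulated categories $K(\dwclass{P})/\sim \, \cong \, \chain{\class{F}}_{dw}/\tilclass{F} \, \cong \, K(\dgclass{C} \cap \dwclass{F})/\sim$. By Lemma~\ref{lemma-classification of projectives and injectives in Ch(F)}~(3) the stable category of $\chain{\class{F}}_{dw}$ is the usual homotopy category, so $\sim$ is honest chain homotopy here, and the localization $\chain{\class{F}}_{dw}/\tilclass{F}$ is precisely the triangulated category $\cha{F}/\tilclass{F}$ discussed after Corollary~\ref{cor-the injective cotorsion pair for the mock homotopy category of projectives}, which exists by Proposition~\ref{prop-triangulated localization} applied to either of the two model structures built above.

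For the recollement diagram I would run part~(1) of Corollary~\ref{cor-recollements for WIC Frobenius cats}, taking the canonical injective cotorsion pair $\class{M}^i_1 = (\class{W},\class{A})$ (with $\class{W}$ the contractible complexes of flats, i.e. the projective-injective objects of $\chain{\class{F}}_{dw}$), $\class{M}^i_2 = (K(\dwclass{P}), \tilclass{F})$ and $\class{M}^i_3 = (\tilclass{F}, K(\dgclass{C} \cap \dwclass{F}))$. The middle term of the resulting recollement is the stable category $\chain{\class{F}}_{dw}/\sim \, = K(\class{F})$, the left term is the kernel $\tilclass{F}/\sim$, the right term is $\cha{F}/\tilclass{F}$, $I$ is the inclusion, $Q$ is the quotient functor of Lemma~\ref{lemma-quotient map}, and the four adjoints are the (co)fibrant replacement functors $\textnormal{E}(K(\dwclass{P}),\tilclass{F})$, $\textnormal{C}(\tilclass{F}, K(\dgclass{C} \cap \dwclass{F}))$, $\lambda = \textnormal{C}(K(\dwclass{P}),\tilclass{F})$ and $\rho = \textnormal{E}(\tilclass{F}, K(\dgclass{C} \cap \dwclass{F}))$; abbreviating $\class{X} = K(\dwclass{P})$ and $\class{Z} = K(\dgclass{C} \cap \dwclass{F})$ reproduces exactly the labels in the stated diagram. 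Alternatively one runs part~(2) with the projective cotorsion pairs $(\class{A},\class{W})$, $(\tilclass{F}, K(\dgclass{C} \cap \dwclass{F}))$, $(K(\dwclass{P}), \tilclass{F})$ (the roles of $\class{M}_2$ and $\class{M}_3$ swapped), which yields the same diagram.

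Since every step is a citation of a prior result, there is essentially no obstacle; the only genuine mathematical input is Neeman's identity $\rightperp{\dwclass{P}} \cap \dwclass{F} = \tilclass{F}$, and that has already been used to establish the two component cotorsion pairs. The mild bookkeeping point to be careful about is matching the generic functor names appearing in Corollary~\ref{cor-recollements for WIC Frobenius cats} with the abbreviated names $\class{X}$ and $\class{Z}$ used in the displayed recollement.
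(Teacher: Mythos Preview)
Your proposal is correct and follows exactly the same route as the paper: the paper's own proof is the single sentence ``This follows from Corollaries~\ref{cor-the projective cotorsion pair for the mock homotopy category of projectives} and~\ref{cor-the injective cotorsion pair for the mock homotopy category of projectives} and~\ref{cor-recollements for WIC Frobenius cats},'' and you have simply unpacked that citation in detail, including the identification of $\sim$ with chain homotopy and the matching of functor labels.
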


\begin{proof}
This follows from Corollaries~\ref{cor-the projective cotorsion pair for the mock homotopy category of projectives} and~\ref{cor-the injective cotorsion pair for the mock homotopy category of projectives} and~\ref{cor-recollements for WIC Frobenius cats}.

\end{proof}

\subsection{The category of flat complexes modulo the exact complexes}

Above we found both a projective and an injective model structure for $\cha{F}/\tilclass{F}$ and recovered the recollement of Neeman. As mentioned at the beginning of this section, one might also consider the category $\cha{F}/\exclass{F}$. We now see that we again get both a projective and an injective model structure for this triangulated category and get a similar recollement to the one of Neeman in Corollary~\ref{cor-neemans recollement}. It is really a version of Verdier's recollement from Corollary~\ref{cor-Verdier recollement via cotorsion pairs} but for the category $\cha{F}$. Indeed it turns out that $\cha{F}/\exclass{F} \cong \class{D}(R)$.

The projective model structure for $\cha{F}/\tilclass{F}$ was obtained by restricting the cotorsion pair $(\dwclass{P},\rightperp{\dwclass{P}})$ in $\ch$ to $\cha{F}$.
For $\cha{F}/\exclass{F}$, we now start by restricting the well-known cotorsion pair $(\dgclass{P},\class{E})$ where $\dgclass{P}$ are the DG-projective complexes and $\class{E}$ are the exact complexes, again to the category $\cha{F}$.

\begin{corollary}\label{cor-the projective cotorsion pair for the derived category of flats}
$(\dgclass{P}, \exclass{F})$ is a projective cotorsion pair in the exact category $\chain{\class{F}}$. The class $\dgclass{P}$ cogenerates a localizing cotorsion pair $(K(\dgclass{P}), \exclass{F})$ in the Frobenius category $\chain{\class{F}}_{dw}$ and $K(\dgclass{P})$ is precisely class of K-projective complexes of flat modules. Moreover, the identity functor on $\chain{\class{F}}$ is a (left) Quillen equivalence from the projective model structure $(\dgclass{P}, \exclass{F})$ on $\chain{\class{F}}$ to the projective model structure induced by $(K(\dgclass{P}), \exclass{F})$.
\end{corollary}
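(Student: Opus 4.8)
The plan is to obtain this corollary as a direct instance of Proposition~\ref{prop-generating projective cotorsion pairs in Ch(F)}, applied to the DG-projective cotorsion pair on $\ch$, exactly parallel to how Corollary~\ref{cor-the projective cotorsion pair for the mock homotopy category of projectives} was deduced from the cotorsion pair $(\dwclass{P},\rightperp{\dwclass{P}})$. First I would recall that, since $\rmod$ has a projective generator (namely $R$), the DG-projective cotorsion pair $(\dgclass{P},\class{E})$ is a projective cotorsion pair in $\ch$; completeness here is the content of the discussion preceding Corollary~\ref{cor-K-projective model for derived category}, using Proposition~3.8 of~\cite{gillespie-quasi-coherent}. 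Next I would observe that the components of a DG-projective complex are projective by definition, so $\dgclass{P} \subseteq \dwclass{P}$ is contained in the class of degreewise projective complexes --- precisely the hypothesis needed to invoke Proposition~\ref{prop-generating projective cotorsion pairs in Ch(F)} with $(\class{P},\class{W}) = (\dgclass{P},\class{E})$.

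That proposition then immediately delivers almost everything: $(\dgclass{P},\class{E}\cap\dwclass{F})$ is a projective cotorsion pair in $\chain{\class{F}}$, the class $\dgclass{P}$ cogenerates a localizing cotorsion pair $(K(\dgclass{P}),\class{E}\cap\dwclass{F})$ in $\chain{\class{F}}_{dw}$, and (by the Remark following that proposition) the identity functor on $\chain{\class{F}}$ is a left Quillen equivalence from the projective model structure $(\dgclass{P},\class{E}\cap\dwclass{F})$ to the one induced by $(K(\dgclass{P}),\class{E}\cap\dwclass{F})$. To match the statement it only remains to note that an exact complex of $R$-modules is degreewise flat exactly when it is an exact complex of flat modules, i.e.\ $\class{E}\cap\dwclass{F} = \exclass{F}$, which turns these into the claimed pairs $(\dgclass{P},\exclass{F})$ and $(K(\dgclass{P}),\exclass{F})$.

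Finally, to identify $K(\dgclass{P})$ with the class of K-projective complexes of flat modules, I would use the characterization of $K(\dgclass{P})$ provided by Proposition~\ref{prop-generating projective cotorsion pairs in Ch(F)}: a complex $X \in \chain{\class{F}}$ lies in $K(\dgclass{P})$ if and only if it is chain homotopy equivalent to some DG-projective complex $P$. By the characterization of cofibrant objects in Corollary~\ref{cor-K-projective model for derived category} (applied to $\cat{G} = \rmod$), being chain homotopy equivalent to a DG-projective complex is exactly the condition of being K-projective. Hence $K(\dgclass{P})$ is precisely the class of complexes of flat modules that are K-projective. I do not expect a genuine obstacle here; the only points requiring care are verifying the hypotheses of Proposition~\ref{prop-generating projective cotorsion pairs in Ch(F)} really hold for $(\dgclass{P},\class{E})$ --- that it is a \emph{complete} projective cotorsion pair in $\ch$ and that $\dgclass{P}\subseteq\dwclass{P}$ --- together with the bookkeeping identifications $\class{E}\cap\dwclass{F} = \exclass{F}$ and, via Corollary~\ref{cor-K-projective model for derived category}, $K(\dgclass{P})$ being the class of K-projective complexes of flat modules.
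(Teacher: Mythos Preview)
Your proposal is correct and follows essentially the same approach as the paper's own proof: both apply Proposition~\ref{prop-generating projective cotorsion pairs in Ch(F)} (and its following Remark) to the DG-projective cotorsion pair $(\dgclass{P},\class{E})$, identify $\class{E}\cap\dwclass{F}=\exclass{F}$, and then read off the description of $K(\dgclass{P})$ as the complexes of flats chain homotopy equivalent to a DG-projective complex, i.e.\ the K-projective complexes of flats. Your explicit citation of Corollary~\ref{cor-K-projective model for derived category} for that last identification is a nice touch, but the route is the same.
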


\begin{proof}
Since $(\dgclass{P}, \class{E})$ is a projective cotorsion pair, we know from Proposition~\ref{prop-generating projective cotorsion pairs in Ch(F)} that $(\dgclass{P}, \class{E} \cap \dwclass{F}) = (\dgclass{P} , \exclass{F})$ is a projective cotorsion pair in $\chain{\class{F}}$. The remaining statements follow from Proposition~\ref{prop-generating projective cotorsion pairs in Ch(F)} and the Remark that follows it. In particular, note that $K(\dgclass{P})$ consists precisely of the complexes of flats which are chain homotopy equivalent to a DG-projective complex. That is, $K(\dgclass{P})$ is the class of K-projective complexes of flats.

\end{proof}

We now set $\class{C} = \rightperp{\exclass{F}}$ in $\ch$. It was shown in\cite[Theorem~5.5]{gillespie-degreewise-model-strucs} that $(\exclass{F}, \class{C})$ is a complete cotorsion pair in $\ch$ and $\class{C}$ is precisely the class of all complexes $C$ of cotorsion modules for which any $f : F \xrightarrow{} C$ with $F \in \exclass{F}$ is null homotopic.

\begin{corollary}\label{cor-the injective cotorsion pair for the derived category of flats}
$(\exclass{F}, \class{C} \cap \dwclass{F})$ is an injective cotorsion pair in the exact category $\chain{\class{F}}$. The class $\class{C} \cap \dwclass{F}$ generates a localizing cotorsion pair $(\exclass{F}, K(\class{C} \cap \dwclass{F}))$ in the Frobenius category $\chain{\class{F}}_{dw}$ and the class $K(\class{C} \cap \dwclass{F})$ is characterized by the following equivalent statements for a complex $X \in \chain{\class{F}}$:
\begin{enumerate}
\item $X \in K(\class{C} \cap \dwclass{F})$.
\item There exists a complex $C \in \class{C} \cap \dwclass{F}$ and contractible complexes of flats $W_1, W_2 \in \chain{\class{F}}$ such that $X \oplus W_1 \cong C \oplus W_2$.
\item There exists a complex $C \in \class{C} \cap \dwclass{F}$ and contractible complexes $W_1, W_2 \in \ch$ such that $X \oplus W_1 \cong C \oplus W_2$.
\item $X$ is chain homotopy equivalent to some $C \in \class{C} \cap \dwclass{F}$.
\end{enumerate}
Moreover, the identity functor on $\chain{\class{F}}$ is a (right) Quillen equivalence from the injective model structure $(\exclass{F}, \class{C} \cap \dwclass{F})$ on $\chain{\class{F}}$ to the injective model structure induced by $(\exclass{F}, K(\class{C} \cap \dwclass{F}))$.
\end{corollary}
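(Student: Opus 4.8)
The plan is to deduce the corollary from Proposition~\ref{prop-generating injective cotorsion pairs in Ch(F)}, applied to the complete cotorsion pair $(\hat{\class{F}},\hat{\class{C}}) = (\exclass{F},\class{C})$ in $\ch$, just as Corollary~\ref{cor-the injective cotorsion pair for the mock homotopy category of projectives} was obtained from that proposition (there one used $\tilclass{F}$ in place of $\exclass{F}$). Thus the entire task reduces to checking the hypotheses of Proposition~\ref{prop-generating injective cotorsion pairs in Ch(F)}. Three of them are immediate: $(\exclass{F},\class{C})$ is a complete cotorsion pair in $\ch$ by~\cite[Theorem~5.5]{gillespie-degreewise-model-strucs}; every exact complex of flat modules is a complex of flat modules, so $\exclass{F} \subseteq \dwclass{F}$; and a split exact complex with flat components is in particular an exact complex of flat modules, hence lies in $\exclass{F}$.

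The one point deserving a line of argument is that $\exclass{F}$ is thick in the exact category $\chain{\class{F}}$. Here I would recall that the admissible short exact sequences of $\chain{\class{F}}$ are exactly the sequences $A \rightarrowtail B \twoheadrightarrow C$ which are short exact in every degree and whose terms are complexes of flat modules --- this uses that $\class{F}$ is closed under kernels of epimorphisms between flats, which holds because $(\class{F},\class{C})$ is hereditary. For such a sequence the long exact sequence in homology shows that whenever two of $A,B,C$ are exact the third is too, and exactness plainly passes to direct summands since homology commutes with finite biproducts. So $\exclass{F}$ is thick in $\chain{\class{F}}$.

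Having verified all the hypotheses, Proposition~\ref{prop-generating injective cotorsion pairs in Ch(F)} at once yields that $(\exclass{F},\class{C} \cap \dwclass{F})$ is an injective cotorsion pair in $\chain{\class{F}}$, that $\class{C} \cap \dwclass{F}$ generates the localizing cotorsion pair $(\exclass{F}, K(\class{C} \cap \dwclass{F}))$ in $\chain{\class{F}}_{dw}$, and that $K(\class{C} \cap \dwclass{F})$ satisfies the four listed equivalent characterizations (these are items (1)--(4) of that proposition, with $\hat{\class{C}} \cap \dwclass{F} = \class{C} \cap \dwclass{F}$). Finally, the assertion that the identity on $\chain{\class{F}}$ is a right Quillen equivalence from the injective model structure $(\exclass{F},\class{C} \cap \dwclass{F})$ to the one induced by $(\exclass{F},K(\class{C} \cap \dwclass{F}))$ is precisely the Remark following Proposition~\ref{prop-generating injective cotorsion pairs in Ch(F)}. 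I expect no genuine obstacle; the only spot requiring care is the thickness of $\exclass{F}$ in $\chain{\class{F}}$, which rests on the elementary long exact homology sequence together with the identification of the admissible exact sequences of $\chain{\class{F}}$.
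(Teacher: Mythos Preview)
Your proposal is correct and follows essentially the same route as the paper: verify the hypotheses of Proposition~\ref{prop-generating injective cotorsion pairs in Ch(F)} for the pair $(\exclass{F},\class{C})$ and then invoke that proposition together with the Remark following it. The paper's proof is terser---it simply asserts that $\exclass{F}$ is thick in $\chain{\class{F}}$ and contains the contractible complexes of flats, then cites the proposition and the completeness of $(\exclass{F},\class{C})$ from~\cite[Theorem~5.5]{gillespie-degreewise-model-strucs}---whereas you spell out the thickness argument via the long exact homology sequence; but the substance is the same.
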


\begin{proof}
It is easy to see that $\exclass{F}$ is thick in $\cha{F}$ and that it contains the contractible complexes of flats. So the result follows from Proposition~\ref{prop-generating injective cotorsion pairs in Ch(F)} and the fact mentioned above that $(\exclass{F}, \class{C})$ is already known to be a complete cotorsion pair in $\ch$.

\end{proof}

The following corollary is now automatic from our work in Section~\ref{sec-localizing cot pairs in frobenius categories}.

\begin{theorem}\label{them-my recollement for derived category of flats}
$(K(\dgclass{P}),\, \exclass{F} ,\, K(\class{C} \cap \dwclass{F}))$ is a localizing cotorsion triple in $\chain{\class{F}}_{dw}$. Corollary~\ref{cor-recollements for WIC Frobenius cats} tells us we have equivalences of triangulated categories
$$K(\dgclass{P})/\sim \ \cong \ \cha{F}/\exclass{F} \ \cong \ K(\class{C} \cap \dwclass{F})/\sim $$ and $\sim$ is the usual chain homotopy.
Setting $\class{X} = K(\dgclass{P})$ and $\class{Z} = K(\class{C} \cap \dwclass{F})$ to simplify notation
we also have a recollement as shown.
\[
\begin{tikzpicture}[node distance=3.5cm]
\node (A) {$\exclass{F}/\sim$};
\node (B) [right of=A] {$K(\class{F})$};
\node (C) [right of=B] {$\cha{F}/\exclass{F}$};
\draw[<-,bend left=40] (A.20) to node[above]{\small E$(\class{X},\exclass{F})$} (B.160);
\draw[->] (A) to node[above]{\small $I$} (B);
\draw[<-,bend right=40] (A.340) to node [below]{\small C$(\exclass{F},\class{Z})$} (B.200);
\draw[<-,bend left] (B.20) to node[above]{\small $\lambda = \text{C}(\class{X},\exclass{F})$} (C.160);
\draw[->] (B) to node[above]{\small Q} (C);
\draw[<-,bend right] (B.340) to node [below]{\small $\rho = \text{E}(\exclass{F},\class{Z})$} (C.200);
\end{tikzpicture}
\]

\end{theorem}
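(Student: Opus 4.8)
The plan is to assemble the localizing cotorsion triple out of the two cotorsion pairs already constructed in Corollaries~\ref{cor-the projective cotorsion pair for the derived category of flats} and~\ref{cor-the injective cotorsion pair for the derived category of flats}, and then simply invoke Corollary~\ref{cor-recollements for WIC Frobenius cats}. First I would observe that Corollary~\ref{cor-the projective cotorsion pair for the derived category of flats} gives the localizing cotorsion pair $(K(\dgclass{P}),\exclass{F})$ in $\chain{\class{F}}_{dw}$, which is a WIC (indeed IC) Frobenius category by Lemma~\ref{lemma-classification of projectives and injectives in Ch(F)}~(3), and that Corollary~\ref{cor-the injective cotorsion pair for the derived category of flats} gives the localizing cotorsion pair $(\exclass{F}, K(\class{C} \cap \dwclass{F}))$ in the same category. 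Since the two pairs share the middle class $\exclass{F}$, the triple $(K(\dgclass{P}),\, \exclass{F},\, K(\class{C} \cap \dwclass{F}))$ is, by definition, a localizing cotorsion triple in $\chain{\class{F}}_{dw}$.

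Next I would feed this triple into Corollary~\ref{cor-recollements for WIC Frobenius cats} with $\cat{A} = \chain{\class{F}}_{dw}$ and $(\class{X},\class{Y},\class{Z}) = (K(\dgclass{P}),\, \exclass{F},\, K(\class{C} \cap \dwclass{F}))$. That corollary immediately delivers the torsion triple $(K(\dgclass{P})/\sim,\, \exclass{F}/\sim,\, K(\class{C} \cap \dwclass{F})/\sim)$ in the stable category $\chain{\class{F}}_{dw}/\sim$, the triangulated equivalences $K(\dgclass{P})/\sim \,\cong\, \chain{\class{F}}_{dw}/\exclass{F} \,\cong\, K(\class{C} \cap \dwclass{F})/\sim$, and the recollement diagram with functors $I$, $Q$, $\lambda = \text{C}(\class{X},\exclass{F})$, $\rho = \text{E}(\exclass{F},\class{Z})$ together with the displayed adjoints $\text{E}(\class{X},\exclass{F})$ and $\text{C}(\exclass{F},\class{Z})$ — precisely the picture in the statement. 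To put this in the notation of the statement I would then recall, again from Lemma~\ref{lemma-classification of projectives and injectives in Ch(F)}~(3), that the projective-injective objects of $\chain{\class{F}}_{dw}$ are exactly the contractible complexes of flat modules, so $\chain{\class{F}}_{dw}/\sim$ is the homotopy category $K(\class{F})$ and $\sim$ is ordinary chain homotopy.

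The only step deserving a word of care is the identification $\chain{\class{F}}_{dw}/\exclass{F} \cong \cha{F}/\exclass{F}$, that is, that the localization produced by Corollary~\ref{cor-recollements for WIC Frobenius cats} inside the Frobenius category $\chain{\class{F}}_{dw}$ agrees with the triangulated localization of the \emph{exact} category $\chain{\class{F}}$ at the class $\exclass{F}$ discussed at the start of the section. This is exactly what the Quillen-equivalence clauses of Corollaries~\ref{cor-the projective cotorsion pair for the derived category of flats} and~\ref{cor-the injective cotorsion pair for the derived category of flats} record: the identity functor on $\chain{\class{F}}$ is a Quillen equivalence from the exact model structure $(\dgclass{P},\exclass{F})$ (or $(\exclass{F}, \class{C} \cap \dwclass{F})$) on $\chain{\class{F}}$ to the model structure induced on $\chain{\class{F}}_{dw}$ by the corresponding localizing cotorsion pair, and Quillen-equivalent model structures have equivalent homotopy categories; by Lemma~\ref{lemma-characterization of weak equivalences in exact categories} both are the localization of $\chain{\class{F}}$ at the weak equivalences determined by the common class $\exclass{F}$ of trivial objects. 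So there is really no new content beyond this bookkeeping and the routine check that the arrows come out with the stated names; everything substantive was already done in Section~\ref{sec-localizing cot pairs in frobenius categories} and in Corollaries~\ref{cor-the projective cotorsion pair for the derived category of flats} and~\ref{cor-the injective cotorsion pair for the derived category of flats}, which is why the result is phrased as a corollary of that section.
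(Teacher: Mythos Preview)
Your proposal is correct and follows exactly the paper's approach: the paper's proof is a one-liner citing Corollaries~\ref{cor-the projective cotorsion pair for the derived category of flats}, \ref{cor-the injective cotorsion pair for the derived category of flats}, and~\ref{cor-recollements for WIC Frobenius cats}, and you have simply unpacked that citation with the appropriate bookkeeping. Your extra paragraph on the identification $\chain{\class{F}}_{dw}/\exclass{F} \cong \cha{F}/\exclass{F}$ is a nice clarification that the paper leaves implicit.
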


\begin{proof}
This follows from Corollaries~\ref{cor-the projective cotorsion pair for the derived category of flats} and~\ref{cor-the injective cotorsion pair for the derived category of flats} and~\ref{cor-recollements for WIC Frobenius cats}.

\end{proof}

The following corollary is interesting and as indicated at the start of this section is true in more general situations.

\begin{corollary}\label{cor-derived cat of flat is equivalent to usual D(R)}
For any ring $R$, we have an equivalence $\cha{F}/\exclass{F} \ \cong \class{D}(R)$.
\end{corollary}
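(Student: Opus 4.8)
The plan is to recognize that both $\cha{F}/\exclass{F}$ and $\class{D}(R)$ are, by construction, the stable category $\dgclass{P}/\sim$ of DG-projective complexes modulo chain homotopy, and that the two constructions yield literally the same triangulated category.

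First I would recall the standard projective model for the derived category. Over any ring $R$ the DG-projective cotorsion pair $(\dgclass{P},\class{E})$ is a complete hereditary projective cotorsion pair in $\ch$, its associated Hovey triple is $(\dgclass{P},\class{E},\class{A})$ with $\class{A}$ the class of all complexes, and its weak equivalences are precisely the homology isomorphisms; hence its homotopy category is $\class{D}(R)$. By the Fundamental Theorem together with Proposition~\ref{prop-left and right homotopic maps in exact model structures}(5) this gives $\class{D}(R)\cong\dgclass{P}/\sim$, where for $f,g\colon X\to Y$ between DG-projective complexes $f\sim g$ iff $g-f$ factors through an object of $\dgclass{P}\cap\class{A}\cap\class{E}=\dgclass{P}\cap\class{E}$. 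A short check identifies $\dgclass{P}\cap\class{E}$ with the class of contractible complexes having projective components: an exact DG-projective complex $P$ has $\homcomplex(P,P)$ exact, so $1_P$ is null homotopic, while conversely a contractible complex is exact and, if its components are projective, it is K-projective. Thus $\sim$ is the ordinary chain homotopy relation.

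Next I would carry out the parallel computation in $\chain{\class{F}}$. By Corollary~\ref{cor-the projective cotorsion pair for the derived category of flats} the pair $(\dgclass{P},\exclass{F})$ is a projective cotorsion pair in the exact category $\chain{\class{F}}$, which has enough projectives by Lemma~\ref{lemma-classification of projectives and injectives in Ch(F)}; its Hovey triple on $\chain{\class{F}}$ is $(\dgclass{P},\exclass{F},\chain{\class{F}})$, and its homotopy category is, by our notational convention, $\cha{F}/\exclass{F}$. Again the Fundamental Theorem and Proposition~\ref{prop-left and right homotopic maps in exact model structures}(5) give $\cha{F}/\exclass{F}\cong\dgclass{P}/\sim$, with $f\sim g$ iff $g-f$ factors through an object of $\dgclass{P}\cap\chain{\class{F}}\cap\exclass{F}$. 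The decisive point is that a DG-projective complex already has projective, hence flat, components, so $\dgclass{P}\cap\chain{\class{F}}\cap\exclass{F}=\dgclass{P}\cap\class{E}$, exactly the class from the previous paragraph. Therefore $\cha{F}/\exclass{F}$ and $\class{D}(R)$ are the same category: both have object class $\dgclass{P}$ and morphisms the chain homotopy classes of chain maps.

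Finally I would check that the triangulated structures coincide. In either model structure the suspension of $X\in\dgclass{P}$ is the cokernel of a cofibration $X\rightarrowtail\bigoplus_n D^{n+1}(X_n)\twoheadrightarrow\Sigma X$ into a contractible (hence, in both structures, trivial) object, so $\Sigma$ is the usual shift in both; and the distinguished triangles arise in both cases from the degreewise split short exact sequences $X\rightarrowtail Y\twoheadrightarrow Z$ of DG-projective complexes, which are one and the same datum whether read in $\ch$ or in $\chain{\class{F}}$. Hence the identity functor on $\dgclass{P}/\sim$ is the desired triangulated equivalence $\cha{F}/\exclass{F}\cong\class{D}(R)$. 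I do not anticipate a real obstacle; the only steps meriting a sentence of care are the identification $\dgclass{P}\cap\exclass{F}=\dgclass{P}\cap\class{E}$ that forces the two homotopy relations to agree, and the (standard) fact that $(\dgclass{P},\class{E},\class{A})$ is a model for $\class{D}(R)$ over an arbitrary ring.
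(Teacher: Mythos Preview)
Your proposal is correct and takes essentially the same approach as the paper: both identify $\cha{F}/\exclass{F}$ and $\class{D}(R)$ with $\dgclass{P}/\sim$, via the projective cotorsion pair $(\dgclass{P},\exclass{F})$ on $\chain{\class{F}}$ from Corollary~\ref{cor-the projective cotorsion pair for the derived category of flats} and the standard projective model on $\ch$, respectively. The paper's proof is a single line, while you spell out the check that the homotopy relation and the triangulated structure on $\dgclass{P}/\sim$ are the same in both descriptions; this extra care is reasonable but not strictly necessary given the conventions already in place in Section~\ref{subsec-triangulated localization}.
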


\begin{proof}
Each is equivalent to $\dgclass{P}/\sim \,$, the homotopy category of DG-projective complexes.
\end{proof}

\subsection{The mock stable derived category and recollement}

There is a recollement due to Murfet in~\cite{murfet} which expresses $\cha{F}/\tilclass{F}$ by adjoining to the derived category of $R$, the \emph{mock stable derived category} of $R$, which is the subcategory of $\cha{F}/\tilclass{F}$ consisting of the exact complexes. We now interpret this through the recollement Theorem~\ref{them-recollements in krause form} and the model structures we have constructed.  However, we first must construct the model structures corresponding to Murfet's mock stable derived category. Since we are working here with modules over a ring $R$, we wish to construct both a projective and an injective model structure and then to obtain both a projective version and an injective version of the recollement. In general, it is the injective version only that works for sheaves.

First, the projective case. We already have used the fact that $(\dwclass{P}, \class{W}_1)$ where $\class{W}_1 = \rightperp{\dwclass{P}}$ is a projective cotorsion pair in $\ch$. It is also known that $(\exclass{P}, \class{W}_2)$ is a projective cotorsion pair in $\ch$; see~\cite[Proposition~7.3]{gillespie-recollements}. Here $\exclass{P}$ are the exact complexes of projectives and $\class{W}_2 = \rightperp{\exclass{P}}$. So Proposition~\ref{prop-generating projective cotorsion pairs in Ch(F)} gives a corresponding projective cotorsion pair $(\exclass{P}, \class{W}_2 \cap \dwclass{F})$  in $\cha{F}$. This model corresponds to Murfet's mock stable derived category. Of course, here in the affine case we have enough projectives and so we are not ``mocking'' anything!

\begin{corollary}\label{cor-affine case of murfet recollement}
Set $\class{M}_1 = (\dwclass{P}, \tilclass{F})$ and $\class{M}_2 = (\exclass{P}, \class{W}_2 \cap \dwclass{F})$ and $\class{M}_3 = (\dgclass{P}, \exclass{F})$. These are each projective cotorsion pairs in $\chain{\class{F}}$ and clearly satisfy $\exclass{F} \cap \dwclass{P} = \exclass{P}$, so we automatically recover the recollement below from Theorem~\ref{them-recollements theorem projective version}.
\[
\begin{tikzpicture}[node distance=3.5cm]
\node (A) {$\exclass{P}/\sim$};
\node (B) [right of=A] {$\dwclass{P}/\sim$};
\node (C) [right of=B] {$\cha{F}/\exclass{F}$};
\draw[<-,bend left=40] (A.20) to node[above]{\small E$(\class{M}_3)$} (B.160);
\draw[->] (A) to node[above]{\small $I$} (B);
\draw[<-,bend right=40] (A.340) to node [below]{\small C$(\class{M}_2)$} (B.200);
\draw[<-,bend left] (B.20) to node[above]{\small $\lambda = \text{C}(\class{M}_3)$} (C.160);
\draw[->] (B) to node[above]{$Q$} (C);
\draw[<-,bend right] (B.340) to node [below]{\small $\rho = \text{E}(\class{M}_2) \circ \text{C}(\class{M}_1)$} (C.200);
\end{tikzpicture}
\]

\end{corollary}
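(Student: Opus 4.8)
The plan is to assemble the three projective cotorsion pairs---two of which have already been produced above---and then invoke Theorem~\ref{them-recollements theorem projective version}; no genuinely new homological input is required. First I would record that each $\class{M}_i$ is a projective cotorsion pair in $\chain{\class{F}}$. The pair $\class{M}_1 = (\dwclass{P},\tilclass{F})$ is precisely Corollary~\ref{cor-the projective cotorsion pair for the mock homotopy category of projectives}; the pair $\class{M}_3 = (\dgclass{P},\exclass{F})$ is Corollary~\ref{cor-the projective cotorsion pair for the derived category of flats}; and the pair $\class{M}_2 = (\exclass{P},\class{W}_2 \cap \dwclass{F})$ is obtained by applying Proposition~\ref{prop-generating projective cotorsion pairs in Ch(F)} to the projective cotorsion pair $(\exclass{P},\class{W}_2)$ in $\ch$ from~\cite[Proposition~7.3]{gillespie-recollements}, whose hypothesis is met since $\exclass{P}$ visibly consists of degreewise projective complexes.

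Next I would check the containment hypotheses of Theorem~\ref{them-recollements theorem projective version} with $\class{C}_1 = \dwclass{P}$, $\class{C}_2 = \exclass{P}$, $\class{C}_3 = \dgclass{P}$, and $\class{W}_3 = \exclass{F}$. The inclusions $\class{C}_2,\class{C}_3 \subseteq \class{C}_1$ are immediate, as an exact complex of projectives and a DG-projective complex are in particular complexes of projective modules. For the decisive equality $\class{W}_3 \cap \class{C}_1 = \class{C}_2$, i.e.\ $\exclass{F} \cap \dwclass{P} = \exclass{P}$, I would note that a complex lies in $\exclass{F} \cap \dwclass{P}$ exactly when it is acyclic with components that are simultaneously flat and projective, which (projectives being flat) is just the condition of being an exact complex of projectives. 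I would also recall that $\chain{\class{F}}$ is WIC---indeed IC---exact with enough projectives by Lemma~\ref{lemma-classification of projectives and injectives in Ch(F)}, so the hypotheses of Theorem~\ref{them-recollements theorem projective version} on the ambient category hold.

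With these verifications in place, Theorem~\ref{them-recollements theorem projective version} applies and yields, with no further argument, the stated recollement: its outer category $\class{C}_2/\sim$ is $\exclass{P}/\sim$, its middle category $\class{C}_1/\sim$ is $\dwclass{P}/\sim$, its right-hand term $\class{A}/\class{W}_3$ is $\chain{\class{F}}/\exclass{F} = \cha{F}/\exclass{F}$, and the six functors are exactly $I$, $Q$, $\textnormal{E}(\class{M}_3)$, $\textnormal{C}(\class{M}_2)$, $\lambda = \textnormal{C}(\class{M}_3)$, and $\rho = \textnormal{E}(\class{M}_2)\circ\textnormal{C}(\class{M}_1)$ as displayed, together with the Quillen equivalences of $\class{M}_2\backslash\class{M}_1$ with $\class{M}_3$ and of $\class{M}_3\backslash\class{M}_1$ with $\class{M}_2$. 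The only obstacle here is bookkeeping: one must keep the three overlapping classes $\dgclass{P}\subseteq\dwclass{P}\supseteq\exclass{P}$ straight and not confuse $\exclass{F}$ (acyclic complexes of flats) with $\tilclass{F}$ (acyclic complexes of flats with flat cycle modules, i.e.\ the complexes exact \emph{in} $\chain{\class{F}}$); once the single identity $\exclass{F}\cap\dwclass{P}=\exclass{P}$ is in hand the recollement is a formal consequence, which is exactly the economy these theorems are designed to provide.
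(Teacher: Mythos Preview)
Your proposal is correct and follows essentially the same approach as the paper: the paper assembles $\class{M}_1$, $\class{M}_2$, $\class{M}_3$ from Corollaries~\ref{cor-the projective cotorsion pair for the mock homotopy category of projectives} and~\ref{cor-the projective cotorsion pair for the derived category of flats} together with Proposition~\ref{prop-generating projective cotorsion pairs in Ch(F)} applied to $(\exclass{P},\class{W}_2)$, notes the obvious equality $\exclass{F} \cap \dwclass{P} = \exclass{P}$, and invokes Theorem~\ref{them-recollements theorem projective version}. Your write-up is slightly more explicit about the ambient-category hypotheses and the containments $\class{C}_2,\class{C}_3 \subseteq \class{C}_1$, but there is no substantive difference.
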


Now we look at the injective version that avoids projectives. Here recall that $(\dgclass{F},\tilclass{C})$ is the DG-flat cotorsion pair from~\cite{gillespie}. The complexes in $\tilclass{C}$ are exact complexes with each cycle module a cotorsion module, and we will call these complexes \emph{cotorsion} complexes. The complexes in $\dgclass{F}$ are called \emph{DG-flat}. The cotorsion pair satisfies the hypotheses of Proposition~\ref{prop-generating injective cotorsion pairs in Ch(F)}, and so we have that $(\dgclass{F},\tilclass{C} \cap \dwclass{F})$ is an injective cotorsion pair in $\cha{F}$.

\begin{corollary}\label{cor-cotorsion-flat verions of murfet recollement}
Set $\class{M}_1 = (\tilclass{F}, \dgclass{C} \cap \dwclass{F})$ and $\class{M}_2 = (\dgclass{F}, \tilclass{C} \cap \dwclass{F})$ and $\class{M}_3 = (\exclass{F}, \class{C} \cap \dwclass{F})$. These are each injective cotorsion pairs in $\chain{\class{F}}$ and satisfy $\exclass{F} \cap (\dgclass{C} \cap \dwclass{F}) = \tilclass{C} \cap \dwclass{F}$, so we automatically recover the recollement below from Theorem~\ref{them-recollements in krause form}.
\[
\begin{tikzpicture}[node distance=3.5cm]
\node (A) {$(\tilclass{C} \cap \dwclass{F})/\sim$};
\node (B) [right of=A] {$(\dgclass{C} \cap \dwclass{F})/\sim$};
\node (C) [right of=B] {$\cha{F}/\exclass{F}$};
\draw[<-,bend left=40] (A.20) to node[above]{\small E$(\class{M}_2)$} (B.160);
\draw[->] (A) to node[above]{\small $I$} (B);
\draw[<-,bend right=40] (A.340) to node [below]{\small C$(\class{M}_3)$} (B.200);
\draw[<-,bend left] (B.20) to node[above]{\small $\lambda = \text{C}(\class{M}_2) \circ \text{E}(\class{M}_1)$} (C.160);
\draw[->] (B) to node[above]{$Q$} (C);
\draw[<-,bend right] (B.340) to node [below]{\small $\rho = \text{E}(\class{M}_2)$} (C.200);
\end{tikzpicture}
\]

\end{corollary}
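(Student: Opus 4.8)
The plan is to recognize $(\class{M}_1,\class{M}_2,\class{M}_3)$ as a triple of exactly the type required by Theorem~\ref{them-recollements in krause form}, applied with ambient category $\cat{A} = \chain{\class{F}}$, and then to read the recollement straight off that theorem. All the structural input is already in hand: $\chain{\class{F}}$ is WIC exact (in fact IC exact) with enough injectives by Lemma~\ref{lemma-classification of projectives and injectives in Ch(F)}; $\class{M}_1 = (\tilclass{F},\dgclass{C}\cap\dwclass{F})$ is an injective cotorsion pair in $\chain{\class{F}}$ by Corollary~\ref{cor-the injective cotorsion pair for the mock homotopy category of projectives}; $\class{M}_2 = (\dgclass{F},\tilclass{C}\cap\dwclass{F})$ is one by Proposition~\ref{prop-generating injective cotorsion pairs in Ch(F)} applied to the DG-flat cotorsion pair $(\dgclass{F},\tilclass{C})$, as observed in the paragraph preceding the corollary; and $\class{M}_3 = (\exclass{F},\class{C}\cap\dwclass{F})$ is one by Corollary~\ref{cor-the injective cotorsion pair for the derived category of flats}, where $\class{C} = \rightperp{\exclass{F}}$ in $\ch$. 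Setting $\class{F}_1 = \dgclass{C}\cap\dwclass{F}$, $\class{F}_2 = \tilclass{C}\cap\dwclass{F}$, $\class{F}_3 = \class{C}\cap\dwclass{F}$ and $\class{W}_3 = \exclass{F}$, it remains only to check the two containments $\class{F}_2,\class{F}_3\subseteq\class{F}_1$ and the equality $\class{W}_3\cap\class{F}_1 = \class{F}_2$ that Theorem~\ref{them-recollements in krause form} asks for.

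The containments are immediate from the descriptions of these classes recalled earlier. For $\class{F}_3\subseteq\class{F}_1$ it suffices that $\class{C}\subseteq\dgclass{C}$: a complex in $\class{C}$ has cotorsion components and kills every chain map from a complex in $\exclass{F}$, hence every chain map from a complex in $\tilclass{F}\subseteq\exclass{F}$, so it lies in $\dgclass{C}$. For $\class{F}_2\subseteq\class{F}_1$ it suffices that $\tilclass{C}\subseteq\dgclass{C}$, i.e.\ that a cotorsion complex (exact with cotorsion cycle modules) is DG-cotorsion; this is part of the general theory of cotorsion pairs on $\ch$ specialized to the flat cotorsion pair, cf.~\cite{gillespie}.

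The equality $\exclass{F}\cap(\dgclass{C}\cap\dwclass{F}) = \tilclass{C}\cap\dwclass{F}$ is the substantive step, and essentially the only obstacle. For the inclusion from left to right, let $X$ lie in the left-hand class; then $X$ is exact with flat components, each $X_n$ is cotorsion, and $X$ is DG-cotorsion, so, being exact and DG-cotorsion, $X$ has cotorsion cycle modules by the same fact about the flat cotorsion pair used in the previous paragraph, giving $X\in\tilclass{C}\cap\dwclass{F}$. For the reverse inclusion, if $X\in\tilclass{C}\cap\dwclass{F}$ then $X$ is exact with flat components, hence lies in $\exclass{F}$, and lies in $\dgclass{C}$ because $\tilclass{C}\subseteq\dgclass{C}$; thus $X\in\exclass{F}\cap(\dgclass{C}\cap\dwclass{F})$. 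I expect this equality — really its nontrivial half, the implication that an exact DG-cotorsion complex of flats has cotorsion cycle modules — to be the only point requiring anything beyond unwinding the definitions of the various decorated classes of complexes.

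With the hypotheses verified, Theorem~\ref{them-recollements in krause form} produces precisely the displayed recollement: the left-hand term $\class{F}_2/\sim$ is $(\tilclass{C}\cap\dwclass{F})/\sim$, the middle term $\class{F}_1/\sim$ is $(\dgclass{C}\cap\dwclass{F})/\sim$, the right-hand term $\cat{A}/\class{W}_3$ is $\cha{F}/\exclass{F}$, the functor $I$ is the inclusion, and $Q$ is the quotient functor of Lemma~\ref{lemma-quotient map}; the remaining arrows are the special precover and preenvelope functors named in the statement. Finally, one may note that $\cha{F}/\exclass{F}$ is the derived category of flat modules of the preceding subsection, which by Corollary~\ref{cor-derived cat of flat is equivalent to usual D(R)} is equivalent to $\class{D}(R)$; thus this gives an injective, projective-free model structure realizing Murfet's recollement, as promised.
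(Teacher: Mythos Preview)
Your proposal is correct and follows the same approach as the paper: verify the hypotheses of Theorem~\ref{them-recollements in krause form} and read off the recollement. The paper's own proof is a single sentence --- it just cites from~\cite{gillespie} the identity $\class{E}\cap\dgclass{C}=\tilclass{C}$ (with $\class{E}$ the exact complexes) and intersects with $\dwclass{F}$ to obtain $\exclass{F}\cap(\dgclass{C}\cap\dwclass{F})=\tilclass{C}\cap\dwclass{F}$ --- whereas you spell out the containments $\class{F}_2,\class{F}_3\subseteq\class{F}_1$ and both inclusions of the key equality; but the substantive content is identical, and the ``nontrivial half'' you flag is exactly the cited fact $\class{E}\cap\dgclass{C}=\tilclass{C}$.
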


\begin{proof}
We know from~\cite{gillespie} that $\class{E} \cap \dgclass{C} = \tilclass{C}$ where $\class{E}$ are the exact complexes, so it is clear that $\exclass{F} \cap (\dgclass{C} \cap \dwclass{F}) = \tilclass{C} \cap \dwclass{F}$.

\end{proof}

We end with a few remarks regarding the model structures constructed in this section.

\begin{remark}
Again, the model structures we've constructed on $\cha{F}$ coming from localizing cotorsion pairs in $\cha{F}_{dw}$ are not cofibrantly generated. However, the ones constructed using cotorsion pairs in $\cha{F}$ are. Indeed the exact category $\cha{F}$ is \emph{efficient} in the sense of~\cite[Definition~2.6]{saorin-stovicek}. As is nicely explained in~\cite[Section~2]{saorin-stovicek}, these are exact categories satisfying a few extra axioms allowing for Quillen's small object argument. In particular, the fact that our cotorsion pairs here are all cogenerated by a set allows for the construction of a set of generating cofibrations (resp. trivial cofibrations), using the methods of~\cite[Section~2]{saorin-stovicek}. In other words, the cotorsion pairs in $\cha{F}$ are \emph{small} in the sense of~\cite{hovey}.

\end{remark}

\begin{remark}
We end by pointing out that while the projective models constructed in this section don't generalize to sheaf categories, the injective ones do. So while we have worked only with modules over a ring $R$ in this section, Theorem~\ref{them-recollements in krause form} only requires that are working in a WIC exact category with enough injectives. When generalizing to categories of sheaves or quasi-coherent sheaves, the only technical results that need to be checked is completeness of the cotorsion pairs $(\tilclass{F},\dgclass{C})$,  $(\dgclass{F},\tilclass{C})$,  $(\dwclass{F},\rightperp{\dwclass{F}})$, and $(\exclass{F},\rightperp{\exclass{F}})$. If we assume $X$ is a scheme having a flat generator, such as when $X$ is quasi-compact and semi-separated, then these cotorsion pairs are complete for complexes of quasi-coherent sheaves. In this setting, the completeness and compatibility of $(\tilclass{F},\dgclass{C})$,  and $(\dgclass{F},\tilclass{C})$ was shown in~\cite{gillespie-quasi-coherent}, while the completeness and compatibility of  $(\dwclass{F},\rightperp{\dwclass{F}})$, and $(\exclass{F},\rightperp{\exclass{F}})$ was shown in~\cite[Theorem~5.5]{gillespie-degreewise-model-strucs}. Similarly, for the category of all sheaves on an arbitrary ringed space we refer to~\cite{gillespie-sheaves}. In fact, as described in~\cite{stovicek-Hill}, all of the true technicalities involved here follow from the fact that the class of all flat quasi-coherent sheaves is \emph{deconstructible}.

\end{remark}

\end{document}